\def\Day{\boxtimes}
\newcommand{\universal}{\zeta_n}
\newcommand{\overconf}[2][M]{\mathscr{F}_{#2 \downarrow #1}(k)}
\newcommand{\conf}[1]{\mathscr{F}_{#1}(k)}
\newif\ifcomment
\newif\ifall
\newcommand{\myM}{\ensuremath{M}}
\newcommand{\moore}{\ensuremath{\mathbf{\Lambda}}}
\newcommand{\Gtop}{\ensuremath{G \mathrm{Top}}}
\newcommand{\topG}{\ensuremath{\mathrm{Top}_G}}
\newcommand{\mfdG}{\ensuremath{\mathrm{Mfld}^{\mathrm{fr}_V}_{G,n}}}
\newcommand{\ele}{\ensuremath{\nu}}
\newcommand{\subgroup}{\ensuremath{\subset}}
\newcommand{\overB}[1][B]{\ensuremath{^{\mathrm{h}}/_{#1}}}
\newcommand{\dimension}{cell dimension }
\title[Equivariant factorization homology]{A geometric approach to equivariant factorization homology and nonabelian Poincar\'e duality}
\author{Foling Zou}
\address{Department of Mathematics, University of Michigan, Ann Arbor, MI 48109}
\email{folingz@umich.edu}
\subjclass{55P91, 55P48, 57R91 \\
\textbf{Key words:} factorization homology, bar construction, scanning map,
  nonabelian Poincare duality}
\begin{document}

\maketitle

\begin{abstract}
Fix a finite group $G$ and an $n$-dimensional orthogonal $G$-representation $V$.
We define the equivariant factorization homology of a $V$-framed
smooth $G$-manifold with coefficients in an $\mathrm{E}_V$-algebra using a two-sided bar construction, generalizing \cite{Andrade,MK}.
This construction uses minimal categorical background and aims for maximal
concreteness, allowing convenient proofs of key properties, including invariance
of equivariant factorization homology under change of tangential structures. 
Using a geometrically-seen scanning map, we prove an equivariant version (eNPD) of the
nonabelian Poincar\'e duality theorem due to several authors. The eNPD states
that the scanning map gives a $G$-equivalence 
from the equivariant factorization homology 
to mapping spaces out the one-point compactification of
the $G$-manifolds, when the coefficients are $G$-connected.
For non-$G$-connected coefficients, when the $G$-manifolds have suitable copies
of $\mathbb{R}$ in them, the scanning map gives group completions.
This generalizes the recognition principle for $V$-fold loops
spaces in \cite{GMPermG}.
\end{abstract}

\section*{Acknowledgement}
This paper is mostly based on my thesis.
I would like to express my deepest gratitude to my advisor Peter May, who raises me up from the
kindergarten of mathematics.
I am indebted to Inbar Klang, Alexander Kupers and Jeremy Miller,
whose work motivates my research.
I would like to thank Asaf Horev and Haynes Miller for helpful conversations, Shmuel Weinberger for being my
committee member,
and the referee for making useful comments.

\tableofcontents

\section{Introduction}
\subsection{Factorization homology: history}
Factorization homology is a theory of invariants on manifolds with coefficients in
suitable $\mathrm{E}_n$-algebras.
The language has been used to formulate and solve questions
in many areas of mathematics. For example, there are homological stability results in
\cite{MK,Knudsen}, a reconstruction of the cyclotomic trace in \cite{AMGR}
and the study of quantum field theory in \cite{BZBJ,CG}.

Non-equivariantly, factorization homology has multiple origins.
The most well-known approach started in
Beilinson--Drinfeld's study of an algebraic geometric approach to conformal field theory \cite{BD}
under the name of chiral homology.
Lurie \cite[5.5]{HA} and Ayala--Francis \cite{AF15} introduced and extensively studied the algebraic
topology analogue, named as factorization homology.
This route relies heavily on $\infty$-categorical foundations.
An alternative geometric model is Salvatore's configuration spaces with summable
labels~\cite{Salvatore}.
This construction is close to the geometric intuition,
but is not homotopical.
Yet another model, using the bar construction and developed by Andrade~\cite{Andrade},
Miller~\cite{Miller} and Kupers--Miller~\cite{MK},
is homotopically well-behaved while staying close to the geometric intuition of
configuration spaces. It is this last approach that we will generalize
equivariantly.

To give context, we first give an introduction to this approach to non-equivariant factorization homology.
It is a classical theorem by Dold--Thom \cite{DT} that the ordinary integral
homology groups of a connected space $M$ are exactly the homotopy
groups the
configuration space on $M$ with summable labels in $\mathbb{N}$,
the commutative monoid of natural numbers.
Salvatore \cite{Salvatore} observed that one can form the configuration space on $M$ with summable labels
in an $\mathrm{E}_n$-algebra $A$, which has
less structure than a commutative monoid, if the space $M$ has the structure of a framed smooth manifold of dimension
$n$, because the local Euclidean chart of $M$ offers the way to sum the
labels in the $\mathrm{E}_n$-algebra $A$.
In \cite{Andrade, MK}, the authors used this idea and defined the factorization
homology of a framed smooth manifold $M$ with coefficients in an
$\mathrm{E}_n$-algebra $A$ to be the two sided bar construction
\begin{equation}
  \label{eq:intro-def1}
\int_M A = \mathrm{B}(\mathrm{D}_M, \mathrm{D}_n, A),
\end{equation}
where $\mathrm{D}_n$ is the monad associated to the little $n$-disks operad and
$\mathrm{D}_M$ is a certain functor associated to embeddings of disks in $M$.

This bar construction definition \autoref{eq:intro-def1} is a concrete point-set level model of the
$\infty$-categorical definition of \cite{HA,AF15}.
One can construct a topological category $\mathrm{Mfld}^{\mathrm{fr}}_n$
of framed smooth $n$-dimensional manifolds and framed embeddings,
which is a
common ground for both definitions. It is a symmetric monoidal
category under disjoint unions.
Let $\mathrm{Disk}^{\mathrm{fr}}_n$ be the full subcategory spanned by objects
equivalent to $\amalg_k \bR^n$ for some $k \geq 0$.
An $\mathrm{E}_n$-algebra $A$ can be viewed as a symmetric monoidal topological functor out of
$\mathrm{Disk}^{\mathrm{fr}}_n$.
The  $\infty$-categorical factorization homology \cite[definition 3.2]{AF15} is the derived symmetric monoidal topological left Kan extension of $A$
along the inclusion:  
\begin{equation}
  \label{eq:intro-def2}
  \begin{tikzcd}
    \mathrm{Disk}_n^{\mathrm{fr}} \ar[r,"A"] \ar[d,hook] & (\mathrm{Top},\times) \\
    \mathrm{Mfld}_{n}^{\mathrm{fr}} \ar[ur, dotted, "{\int_-A}"'] &
  \end{tikzcd}
\end{equation}
Horel \cite[7.7]{Horel} showed the equivalence of \autoref{eq:intro-def1} and \autoref{eq:intro-def2}.

\subsection{The definition of equivariant factorization homology}
We fix an integer $n$ and a finite group $G$ throughout.
An equivariant version of an $\mathrm{E}_n$-algebra is an $\mathrm{E}_V$-algebra,
where $\mathrm{E}_V$ is a monad associated to a $G$-operad that is equivalent to the little $V$-disks operad
$\mathscr{D}_V$ (see \autoref{sec:operad}).
The $\mathrm{E}_V$-algebras give the correct concrete coefficient input of equivariant
factorization homology on $V$-framed smooth $G$-manifolds.
Here, a smooth $G$-manifold  $M$ is $V$-framed if
there is a trivialization 
\begin{equation}
\label{eq:intro-frV}
\phi_M: \mathrm{T}M \cong M \times V
\end{equation} of its tangent bundle.

In line with \autoref{eq:intro-def1},
we define the equivariant factorization homology of a
$V$-framed smooth $G$-manifold $M$ with coefficients in an $\mathrm{E}_V$-algebra
$A$ to be (\autoref{defn:factoriaztion}):
\begin{equation}
  \label{eq:intro-defn-frV}
\int^{\mathrm{fr}_V}_M A = \mathrm{B}(\mathrm{D}_{\myM}^{\mathrm{fr}_V}, \mathrm{D}^{\mathrm{fr}_V}_V, A).
\end{equation}

\begin{rem}
 As will be made clear in \cite{KMZ},
$$ \mathrm{B}(\mathrm{D}_{\myM}^{\mathrm{fr}_V},
\mathrm{D}^{\mathrm{fr}_V}_V, A) \simeq  \mathrm{D}_{\myM}^{\mathrm{fr}_V}
\otimes_{\mathrm{D}^{\mathrm{fr}_V}_V} \int_V^{\mathrm{fr}_V} A,$$
where $\displaystyle\int_V^{\mathrm{fr}_V} A = \mathrm{B}(\mathrm{D}_{V}^{\mathrm{fr}_V},
\mathrm{D}^{\mathrm{fr}_V}_V, A)$. This bar construction is a cofibrant replacing of $A$ in
$\mathrm{D}^{\mathrm{fr}_V}_V$-algebra, and thus the equivariant factorization
homology could be understood as
first taking a cofibrant replacement,
and then extending from local to global by tensoring with
$\mathrm{D}_{\myM}^{\mathrm{fr}_V}$ over $\mathrm{D}^{\mathrm{fr}_V}_V$.
\end{rem}

We explain the definition \autoref{eq:intro-defn-frV} in a conveniently
generalized context.
A tangential structure is a $G$-map $\theta: B \to B_G
O(n)$ for some well-chosen $G$-space $B$\footnote{Non-equivariantly, $\theta$ is usually taken to be $B\Pi \to BO(n)$ for a subgroup $\Pi \subset O(n)$}. 
A morphism of two tangential structures is a $G$-map over $B_GO(n)$.
All tangential structures form a category $\mathcal{TS}$, which is simply the over category
$\Gtop/_{B_GO(n)}$.

Denote by $\universal$ the universal $G$-$n$-vector bundle over $B_GO(n)$. Pulling back along $\theta$ gives a bundle $\theta^{*}\universal$ over  $B$.  
  A $\theta$-framing on a smooth $G$-manifold $M$ is an equivariant bundle map $\phi_M:\mathrm{T}M \to \theta^{*}\universal$.
The $G$-manifold $M$ has a $\theta$-framing if and only if the classifying map
of its tangent bundle $\tau: M \to B_GO(n)$
 factors up to $G$-homotopy through $\theta: B \to B_GO(n)$.
Indeed, a $\theta$-framing on $M$ is the same
data as a map $\tau_B: M \to B$ plus a homotopy between the two classifying maps
$\tau$ and $\theta\circ\tau_B$ from
$M$ to $B_GO(n)$ (see \autoref{cor:HomVSMapoverB} with
\autoref{defn:mapOverB}).
  The $V$-framing \autoref{eq:intro-frV} is a special case:
  it is $\mathrm{fr}_V$-framing for a particular
  tangential structure $\mathrm{fr}_V: * \to B_GO(n)$.

In \autoref{sec:tangential}, we construct a $\Gtop$-enriched category
$\mathrm{Mfld}^{\theta}_{G,n}$, the category of smooth $n$-dimensional
$\theta$-framed $G$-manifolds and $\theta$-framed embeddings.
In particular, there is the category of $V$-framed smooth
$G$-manifold $\mfdG$.
It takes some effort to define the morphisms in the category.
For example, $V$-framed embeddings between little $V$-disks should be just the linear embeddings in the definition of the
little $V$-disks operad. However, we do
not have the notion of linear embeddings between general $V$-framed manifolds.
The solution is to allow all embeddings and to add in path data to correct the homotopy
type, so that we do not see the unwanted rotations. This idea goes back to Steiner \cite{Steiner}
and was used non-equivariantly by Andrade~\cite{Andrade} and Kupers--Miller \cite{MK}.
Using paths in the framing space, we define the $\theta$-framed embedding space
of $\theta$-framed manifolds (\autoref{def:embedding}).
This construction is covariant as a functor of $\theta$.

In \autoref{sec:eFH}, we use the $\Gtop$-enriched category
$\mfdG$ to build the $V$-framed factorization homology
by the bar construction \autoref{eq:intro-defn-frV}.
The representation $V$ can be viewed as a $G$-manifold with a
canonical $V$-framing, so each $\amalg_k V$ also has a canonical $V$-framing.
Let $\Lambda$ be the category of based finite sets $\mathbf{k} =
\{0,1,2,\cdots,k\}$ with base point $0$ and based injections.
For any $M$ in
$\mfdG$,
$\mathscr{D}_M^{\mathrm{fr}_V} ({k}) = \mathrm{Emb}^{\mathrm{fr}_V}(\amalg_k V, M)$
gives a functor $\Lambda^{op} \to \Gtop$.
Such functors 
$\mathscr{E}:\Lambda^{op} \to \Gtop$ and their associated functors
$\mathrm{E}: \Gtop_{*} \to \Gtop_{*} $
(\autoref{defn:functor}) give a convenient context for reduced operads and
monads, which we explain in \autoref{sec:Lambda}.

Taking $M = V$, compositions in $\mfdG$ equip the sequence $\mathscr{D}^{\mathrm{fr}_V}_V$
with the structure of a reduced $G$-operad. It is the endomorphism operad of the object $V$.
Moreover, it is equivalent to the little $V$-disks operad $\mathscr{D}_V$ (\autoref{cor:compareDV}), so it is an
$\mathrm{E}_V$-operad.
The functors associated to $\mathscr{D}^{\mathrm{fr}_V}_V$ and
$\mathscr{D}^{\mathrm{fr}_V}_{\myM}$ give a monad
$\mathrm{D}^{\mathrm{fr}_V}_V$ and a right $\mathrm{D}^{\mathrm{fr}_V}_V$-module
functor $\mathrm{D}_{\myM}^{\mathrm{fr}_V}$, and thus
\autoref{eq:intro-defn-frV} makes sense for a $\mathrm{D}^{\mathrm{fr}_V}_V$-algebra $A$.

For a tangential structure $\theta$ so that $V$ is $\theta$-framed 
(possible under the conditions on $\theta$ prescribed in \autoref{prop:22}), one
can define the
$\theta$-framed equivariant factorization homology with coefficient in a
$\mathrm{D}^{\theta}_V$-algebra $A$ as
\begin{equation}
  \label{eq:intro-theta-defn}
\int^{\theta}_M A = \mathrm{B}(\mathrm{D}_{\myM}^{\theta},
\mathrm{D}^{\theta}_V, A).
\end{equation}
Specializing to $\theta = \mathrm{fr}_V$, \autoref{eq:intro-theta-defn} gives \autoref{eq:intro-defn-frV}.
This construction is homotopically well-behaved.
\begin{prop}
  (\autoref{prop:homotopical}).
  The functor $\displaystyle\int_M^{\theta} -:
  \mathrm{D}_V^{\theta}[\Gtop_{*}]  \to \Gtop_{*}$ preserves weak equivalences.
\end{prop}

\subsection{Main results}
In \autoref{sec:embeddingspace}, we prove that the embedding space in  $\mfdG$
has a close connection to the configuration space. 
\begin{prop} (\autoref{cor:conf})
  \label{thm:intro1}
  Evaluating at $0$ of the embedding gives a $(G \times \Sigma_k)$-homotopy equivalence:
\begin{equation*}
  ev_0: \mathscr{D}^{\mathrm{fr}_{V}}_{\myM}(k) =
  \mathrm{Emb}^{\mathrm{fr}_V}(\amalg_k
  V, M) \overset{\simeq}{ \to } \conf{M}. 
\end{equation*}
\end{prop}
\noindent Here,  $\conf{M}$ is the ordered configuration space of
$k$ points in $M$.
This is used to justify that $\mathscr{D}^{\mathrm{fr}_V}_V$ is an $\mathrm{E}_V$-operad.

\medskip
We also prove an invariance result in the equivariant setting.
Such a result is known non-equivariantly \cite[Proposition 3.9]{AF15} and
expected equivariantly.
\begin{thm}
   (\autoref{prop:change-of-tangential})
  Let $q: \theta_1 \to \theta_2$ be a morphism of tangential structures
  and $V$ be $\theta_1$-framed. We also write $V$ for the $\theta_2$-framed
  $G$-manifold $q_{*}V$.
 Then for a $\theta_1$-framed $G$-manifold $M$ and a $\mathrm{D}_V^{\theta_2}$-algebra
  $A$, there is a $G$-equivalence
\begin{equation*}
\int^{\theta_1}_M q^{*}A \simeq \int^{\theta_2}_{q_{*}M} A.
\end{equation*}
\end{thm}
\noindent Due to the invariance, we may drop the $\theta$ from the notation $\displaystyle\int^{\theta}$
when the context is clear.

\medskip
The bar construction definition \autoref{eq:intro-theta-defn} stays close to the geometric origin, which readily leads to proofs of the following
results using classical techniques.
\begin{prop}
Equivariant factorization homology satisfies the following properties:  
\begin{enumerate}
\item (\autoref{prop:FHonV})
  \begin{align*}
     \int^{\theta}_VA & \simeq A. \\
\int^{\theta}_M \mathrm{D}^{\theta}_VA & \simeq \mathrm{D}^{\theta}_MA.
  \end{align*}
\item (\autoref{prop:FHonUnion})
\begin{equation*}
\int^{\theta}_{M \sqcup N}A \cong \int^{\theta}_MA \times
  \int^{\theta}_NA.
\end{equation*}
\end{enumerate}
\end{prop}

\medskip
In \autoref{chap:NPD}, we prove that our definition satisfies the following theorem.
\begin{thm}(\autoref{thm:NPDV} and \autoref{thm:grp})
  \label{thm:intro2}
 Let $M$ be a $V$-framed manifold and $A$ be a $\mathrm{D}^{\mathrm{fr}_V}_V$-algebra in $\Gtop$.
 There is a $G$-map:
\begin{equation*}
p_M: \int_M A \to \mathrm{Map}_*(M^+, \mathrm{B}^VA).
\end{equation*}
\begin{enumerate}
\item \label{item:eNPD}(eNPD) If $A$ is $G$-connected, $p_M$ is a weak  $G$-equivalence.
\item  If $V = W \oplus \bR$ and $M \cong N \times \mathbb{R}$ for a $W$-framed
  manifold $N$, then $p_M$ is a weak group completion (in the sense of
  \autoref{defn:weak-group-cplt}). 
\item If $V = U \oplus \bR^{2}$
  and $M \cong N \times \mathbb{R}^2$ for a $U$-framed
  manifold $N$, then $p_M$ is a group completion (in the sense of \autoref{defn:grp-complete}).
\end{enumerate}
\end{thm}
\noindent Here, $M^+$ is the one-point compactification of $M$; $\mathrm{B}^VA$
is a model for the $V$-fold deloop of $A$ defined in 
\autoref{sec:NPD}.

 In \autoref{thm:intro2}, part~\autoref{item:eNPD} is an equivariant version of the nonabelian Poincar\'e duality theorem
 due to several authors, including \cite[Theorem 6.6]{Salvatore}  and
 \cite[5.5.6.6]{HA};
 specializing to $M=V$ in \autoref{thm:intro2}, it recovers the equivariant
 recognition principle of \cite[Theorem 1.14]{GMPermG}.
In particular, if the $\mathrm{E}_V$-algebra $A$ is grouplike, then  $A \simeq \Omega^V\mathrm{B}^VA$.
  This justifies the definition of $\mathrm{B}^{V}A$.
  
\begin{cor}
  Let $M$ and $A$ be as in \autoref{thm:intro2} and $A$ be $G$-connected.
 \newline  Then we have
  $\displaystyle    \int_{G/H \times V} A \simeq
\mathrm{Map}_{*}(G/H_+, A).$
Therefore, $(\displaystyle\int_{G/H \times V} A)^G \simeq A^H.$
\end{cor}


\medskip
The map $p_M$ in the eNPD theorem is induced by a scanning map, a natural transformation of
 right $\mathrm{D}^{\mathrm{fr}_{V}}_{V}$-functors:
 \begin{equation}
   \label{eq:intro-scanning} 
\mathrm{D}^{\mathrm{fr}_V}_{\myM}(-) \to \mathrm{Map}_*(M^+,\Sigma^V-).
\end{equation}
The scanning map has been studied in various forms
in \cite{McDuff75, BM88, MT14}. In particular, Rourke--Sanderson \cite{RS00} proved that McDuff's scanning
map is a weak $G$-equivalence on $G$-connected objects.
Classically, given a configuration of $k$ points  in $M$, regarded as an
embedding of $\mathbf{k}$ to $M$,
the Pontryagin-Thom collapse gives an element of $\mathrm{Map}_{*}(M^+,
\wg_kS^n)$.
Note that the $i$-th wedge component $S^n$ is in fact the fiber at the image of
$i \in \mathbf{k}$ of the sphere bundle $\mathrm{Sph}(\mathrm{T}M)$.
The scanning map pushes the target further to the codomian
$\mathrm{Section}_c(M,\mathrm{Sph}(\mathrm{T}M))$ independent of $k$,
so that the individual Pontryagin-Thom maps vary continuously for the configurations.
To do this, one needs an identification of the normal bundle of the embedded points with the
tangent bundle of the manifold.
There are conceptually two ways to do this: to use geodesics to generate a canonical local
vector field (\cite{McDuff75}), or to fatten the configuration space to include the data of a tubular
neighborhood (\cite{MT14}).

In the $V$-framed case, we can give an easy definition of the
scanning map  \autoref{eq:defnsX}. 
In \autoref{chap:appendix-scanning}, we compare our scanning map to the scanning
maps in the literature. In particular, we prove in
\autoref{thm:htpyOfScanning} that equivariant versions of the scanning maps in \cite{McDuff75} and
\cite{MT14} are homotopic, which is claimed without proof in \cite[Remark
3.2]{MT14}.

\medskip
Our proof of eNPD has two steps. We sketch it out when $A$ is $G$-connected. The first step is to use the scanning map
\autoref{eq:intro-scanning}. It assembles to a simplicial map
  \begin{equation*}
\mathrm{B}_{\bullet}(\mathrm{D}^{\mathrm{fr}_V}_{\myM} , \mathrm{D}^{\mathrm{fr}_V}_V, A) \to  \mathrm{Map}_* (M^+, K_{\bullet} )
\end{equation*}
for a simplicial $G$-space $K_{\bullet}$ that realizes to $\mathrm{B}^VA$.
Using the Rourke--Sanderson result, the induced map on the geometric realization is a  weak $G$-equivalence
\begin{equation*}
  \int_MA = |\mathrm{B}_{\bullet}(\mathrm{D}^{\mathrm{fr}_V}_{\myM} , \mathrm{D}^{\mathrm{fr}_V}_V, A)|
  \to |\mathrm{Map}_* (M^+, K_{\bullet} )|.
\end{equation*}

The second step is to pull the $M^+$ out of the geometric realization.
The map
\begin{equation}
\label{eq:intro-map}
|\mathrm{Map}_{*}(M^+, K_{\bullet})| \to \mathrm{Map}_{*}(M^+, |K_{\bullet}|)
\end{equation}
 is a $G$-equivalence only when
$K_{\bullet}$ satisfies some connectivity conditions.
Non-equivariantly, for $M=\bR$ so that $M^+=S^1$ ,
a sufficient connectivity condition is given in \cite[Theorem
12.3]{MayGILS}.
Let $\nu$ be a function from the conjugacy classes of subgroups of $G$ to
$\mathbb{Z}_{\geq -1}$. We say a finite-dimensional based $G$-CW complex $X$ has \dimension $ \nu$ if
 its cells in the form of $G/H \times D^n$ have highest dimension $\nu(H)$. We define the function $\mathrm{dim}(X)$ to be
\begin{equation*}
\mathrm{dim}(X)(H) = \max\limits_{H \subgroup L} \nu(L).
\end{equation*}
Combining the non-equivariant result with induction shows: 
\begin{thm}(\autoref{thm:HM})
  If $X$ is a finite-dimensional based $G$-CW complex and
  $K_{\bullet}$ is a simplicial $G$-space such that for all $n$ and $H \subset G$, $K_n^H$ is
  $\mathrm{dim}(X)(H)$-connected, then $|\mathrm{Map}_{*}(X, K_{\bullet})| \to \mathrm{Map}_{*}(X, |K_{\bullet}|)$ is a weak $G$-equivalence.
\end{thm}
\noindent When $A$ is $G$-connected, the $K_{\bullet}$ constructed out of it satisfies
this connectivity condition, so the eNPD theorem follows.

\subsection{Comparison to other work}
In this paper we give a homotopical point set definition of equivariant
factorization homology generalizing \cite{Andrade}.\footnote{
Note that \cite{Andrade} is non-equivariant: their $G$ in $\mathrm{Emb}^G$ is a subgroup of $GL_n(\mathbb{R})$ and therefore refers to
a tangential structure $\theta: BG \to BGL_n(\mathbb{R})$.}
There are axiomatic approaches to $\infty$-categorical equivariant factorization homology
\cite{Horev,Weelinck} using $G$-$\infty$-categories and
$\infty$-$G$-categories respectively.
Our definition and \cite{Weelinck},
being generalizations of \autoref{eq:intro-def1} and \autoref{eq:intro-def2}
respectively, are equivalent.
\footnote{In \cite{Weelinck}, their $G$ is our $\theta: BG \to BO(n)$; their $\Gamma$  is our $G$; their $\rho$ is
our $V$; their $\Gamma^{\rho}\mathrm{Orb}^G_n$ is our $\mfdG$ with the adjustment that the morphisms
are replaced by the $G$-fixed points of the morphisms; their $\Gamma^{\rho}\mathrm{Disk}^G_n$-algebra is defined in
a symmetric monoidal category $\mathcal{C}$ whose objects do not necessarily
have $G$-actions, and a $\mathrm{D}^{\mathrm{fr}_V}_V$ algebra $A$ in $\Gtop$ in our
sense gives a  $\Gamma^{\rho}\mathrm{Disk}^G_n$-algebra in
$\mathcal{C} = \Gtop$ in their sense
by sending $G \times_H V \in \Gamma^{\rho}\mathrm{Disk}^G_n$ to $\mathrm{Map}(G/H, A)$.}
The definition of equivariant
factorization homology in \cite{Horev} is called ``genuine'', meaning that it considers $H$-manifolds for all subgroups
$H \subset G$.
Restricted  to $G$-manifolds, a theory of \cite{Horev} gives a theory of \cite{Weelinck}.

In joint work with Horev and Klang \cite{HKZ}, the author studies equivariant factorization homology of Thom
$G$-spectra in the context of \cite{Horev}. There,
a very different proof of the eNPD theorem adapted to the $\infty$-categorical context is given,
generalizing Corollary 4.6 of \cite{AF15}.
The alternative proof is an axiomatic one, based on equivariant
handle-body decompositions of the $G$-manifold $M$.
In contrast, we provide a geometrically-seen scanning map that gives the
equivalence in this paper.
The scanning map was used to prove homological
stability properties of non-equivariant configuration spaces and factorization homology in
\cite{McDuff75, Miller, MK}.
The approach in our paper should lead to equivariant stability
results.

Another advantage of our approach to the equivariant factorization
homology and the eNPD theorem is that it
gives a simplicial filtration on the mapping space
$\mathrm{Map}_{*}(M^+, Y)$ (taking $A = \Omega^VY$),
thus offering a spectral sequence.
It could be useful for obtaining equivariant generalizations of 
\cite{CTHomologyFunction}. 
However, as computations of equivariant homology of the free $\mathrm{E}_V$-algebra on $A$,
$\mathrm{H}_{\star}^G (\mathrm{D}_V^{\mathrm{fr}_V} A)$,
 and in general,
 $\mathrm{H}_{\star}^G (\mathrm{D}_M^{\mathrm{fr}_V} A)$,
 remains open for any coefficients,
 this computational tool has not yet been explored.

Our definition of 
$\mathrm{Mfld}^{\theta}_{G,n}$  in \autoref{sec:tangential} is closely related
to Ayala--Francis \cite{AF15}, which we compare in \autoref{chap:homspace}. For the
trivial tangential structure $\mathrm{id}: B_GO(n) \to B_GO(n)$,
we have $\mathrm{Mfld}^{\mathrm{id}}_{G,n} \simeq \mathrm{Mfld}_{G,n}$. The category
$\mathrm{Mfld}^{\theta}_{G,n}$ is a pullback of
$\mathrm{Mfld}^{\mathrm{id}}_{G,n}$ induced by the map tangential structure
$\theta \to \mathrm{id}$. We also identify the automorphism
$G$-space $\mathrm{Emb}^{\theta}(V,V)$ in \autoref{thm:autoV}.

\subsection{Notations}
\label{sec:notations}
\begin{itemize}
\item  $\Gtop$ is the $\mathrm{Top}$-enriched category of $G$-spaces and $G$-equivariant maps.
\item  $\topG$ is the $\Gtop$-enriched category of $G$-spaces
  and non-equivariant maps where $G$ acts by conjugation on the mapping space.
\end{itemize}

For a space $M$ and a fiber bundle $E \to M$, 
\begin{itemize}
\item $\conf{M}$ is the ordered configuration space of $k$ points in $M$.
\item $\overconf{E}$ is the ordered configuration space of $k$ points in $E$ whose images are $k$
  distinct points in $M$.
\end{itemize}


\section{Preliminaries on operads and equivariant bundles}
\label{part:preliminary}
\subsection{$\Lambda$-sequences and operads}
 \label{sec:Lambda}
 To streamline the monadic bar construction in the main body,
we use an elementary categorical framework of
$\Lambda$-objects. This framework is studied in more detail in a paper with May
and Zhang \cite{MZZ}.
  This subsection is a summary of the relevant content towards
  \autoref{exmp:DMDV} and \autoref{prop:circledistribute}, which are used in later sections.

Let $\Lambda$ be the category of based finite sets $\mathbf{k} = \{0,1,2,\cdots,k\}$ with base point
$0$ and based injections.
The morphisms of $\Lambda$ are generated by permutations and the ordered injections
 $s_i^{k} : \mathbf{k-1} \to \mathbf{k}$ that skip $i$ for $1 \leq i \leq k$.
 It is a symmetric monoidal category with wedge 
 sum as the symmetric monoidal product.

 For a symmetric monoidal category $(\mathscr{V},\otimes,\mathcal{I})$,
 let $ \mathscr{V}_{\mathcal{I}}$ be the category under the unit.
In \cite{MZZ}, $\mathscr{V}$ is more general, but here we will work only
with the  Cartesian monoidal category $(\Gtop, \times, *)$.
The empty $G$-space $\varnothing$ is an initial object.

\begin{defn}
 A $\Lambda$-sequence in $\Gtop$ is a functor $\mathscr{E}: \Lambda^{op} \to \Gtop$.
 We write $\mathscr{E}(k)$ for $\mathscr{E}(\mathbf{k})$.
 It is called unital if $\mathscr{E}(0) = *$.
 The category of all $\Lambda$-sequences in $\Gtop$ is denoted
 $\Lambda^{op}[\Gtop]$, where morphisms are natural transformations of functors.
 The category of all unital $\Lambda$-sequences in $\Gtop$ is denoted
 $\Lambda^{op}_{*}[\Gtop]$,
 where morphisms are natural transformations of functors that are identity at level zero.
\end{defn}

 The category $\Lambda^{op}[ \Gtop]$ admits a symmetric monoidal structure
 $(\Lambda^{op}[\Gtop], \Day, \mathscr{I}_0)$. Here, $\Day$ is the Day
 convolution of functors on the closed symmetric monoidal category $\Lambda^{op}$. The unit
 is given by
\begin{equation*}
\mathscr{I}_{0}(n) =
\begin{cases}
  *, & n=0;\\
  \varnothing, & n>0;
\end{cases}
\end{equation*}
The symmetric monoidal product $\Day$ on $\Lambda^{op}[\Gtop]$
induces a symmetric monoidal product on
$\Lambda^{op}[\Gtop]_{\mathscr{I}_0}$ and $\Lambda^{op}_{*}[\Gtop]$, which we still denote by $\Day$.

The categories
$\Lambda^{op}[\Gtop]_{\mathscr{I}_0}$ and  $\Lambda^{op}_{*}[\Gtop]$ admit a second
(nonsymmetric) monoidal product
 $\odot$ in addition to $\Day$, called the circle product.  
 It is analogous to Kelly's circle product on symmetric
 sequences \cite{Kelly05}. The unit for $\odot$ is given by
  $$
\mathscr{I}_{1}(n) =
\begin{cases}
  *, & n=0,1;\\
  \varnothing, & n>1;
\end{cases} $$
where the only non-trivial morphism $\mathscr{I}_1(1) \to \mathscr{I}_1(0)$ is the identity. For a
brief definition of $\odot$, see \autoref{defn:LambdaObjectCircle}~\autoref{item:associate2}.

An operad in $\Gtop$, as defined in \cite{May84}, gives an example of a symmetric sequence in
$\Gtop$. If the operad is unital, meaning the $0$-space of the operad is the unit, it has the
structure of a $\Lambda$-sequence in $\Gtop$.
A unital operad in $\mathrm{Top}$ or $\Gtop$, is also called a reduced
operad in \cite{May84}.
In fact,
we have the unital variant of Kelly's observation \cite{Kelly05}:
\begin{thm}(\cite[Theorem 0.10]{MZZ})
  \label{thm:operad}
  A unital operad in $\Gtop$ is a monoid in the monoidal category
   $(\Lambda^{op}_{*}[\Gtop], \odot, \mathscr{I}_1)$. 
\end{thm}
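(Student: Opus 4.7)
The plan is to establish an isomorphism of categories between unital operads in $\mathscr{V}$ and monoids in $(\Lambda^{op}_{\mathcal{I}}[\mathscr{V}], \odot, \mathscr{I}_1)$. This is a unital, $\Lambda$-flavored refinement of Kelly's identification of operads with monoids in symmetric sequences under the circle product, so I would follow Kelly's roadmap and focus attention on how the extra $\Lambda$-structure is forced by, and equivalent to, the unital condition $\mathscr{E}(\mathbf{0}) = \mathcal{I}$.

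First, I would show that a unital operad $\mathscr{O}$ carries a canonical unital $\Lambda$-sequence structure. The $\Sigma_k$-actions and the equality $\mathscr{O}(\mathbf{0}) = \mathcal{I}$ are part of the operad data; what is new is functoriality on the face injections $s_i^k: \mathbf{k-1} \to \mathbf{k}$. I define $\mathscr{O}(s_i^k): \mathscr{O}(\mathbf{k}) \to \mathscr{O}(\mathbf{k-1})$ to be the partial composition in slot $i$ with the canonical $0$-ary element of $\mathscr{O}(\mathbf{0}) = \mathcal{I}$. The equivariance, associativity, and unit axioms of the operad then imply that these maps compose and intertwine permutations correctly, yielding a functor $\Lambda^{op} \to \mathscr{V}$.

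Next I would unpack the monoid structure. Using the brief description of $\odot$ in \autoref{defn:LambdaObjectCircle} and developed fully in \cite{MZZ}, the circle product $\mathscr{O} \odot \mathscr{O}$ is built as a coend over $\Lambda$ that packages an outer $k$-ary element together with $k$ inner elements whose arities tuple to the total output arity, modulo identifications by the $\Lambda$-structure on both sides. The essential feature is that these identifications force a term with a $0$-ary inner input in slot $i$ to coincide with the term obtained by applying $s_i^k$ to the outer element. Consequently, a map $\mu: \mathscr{O} \odot \mathscr{O} \to \mathscr{O}$ of unital $\Lambda$-sequences packages exactly the classical operadic composition maps, equivariant in both the inner and outer symmetric groups and normalized against the unit, while the unit map $\eta: \mathscr{I}_1 \to \mathscr{O}$ picks out the operad unit in $\mathscr{O}(\mathbf{1})$. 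Associativity of $\mu$ then unpacks to operadic associativity, and the unit axioms match the operad unit axioms.

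The hard part will be the explicit analysis of $\odot$ as a coend over $\Lambda$. Unlike the symmetric-sequence circle product, which is a straightforward coproduct of coinvariants, $\odot$ on unital $\Lambda$-sequences carries nontrivial identifications arising from the degeneracy-like maps, and I must verify that maps out of this coend correspond exactly to the data of equivariant composition laws compatible with the unit. In practice I would carry out this computation via the Day convolution description of the iterated products $\mathscr{E}^{\Day k}$, observe that the unital normalization collapses all $0$-ary contributions onto $\mathscr{I}_1$, and assemble the resulting data as in \cite[Theorem 0.10]{MZZ} to finish the identification in both directions.
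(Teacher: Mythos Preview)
The paper does not prove this theorem; it is quoted from \cite[Theorem~0.10]{MZZ} and no argument is given in the present paper. So there is no in-paper proof to compare your proposal against.

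That said, your outline is the expected one and matches the spirit of the reference: endow a unital operad with its $\Lambda^{op}$-structure by inserting the canonical $0$-ary element $\mathcal{I}=\mathscr{O}(\mathbf{0})$ via $\circ_i$, then identify the coend defining $\mathscr{O}\odot\mathscr{O}$ with the usual substitution data so that a monoid map $\mu$ is exactly an operadic composition. The only point I would flag is that you should make the bijection two-sided and functorial: given a monoid $(\mathscr{E},\mu,\eta)$ in $(\Lambda^{op}_{\mathcal{I}}[\mathscr{V}],\odot)$, you must also check that the $\Lambda^{op}$-structure maps $(s_i^k)^*$ on $\mathscr{E}$ are forced to equal $\circ_i$ with the $0$-ary element extracted from $\mu$ and the identification $\mathscr{E}(\mathbf{0})=\mathcal{I}$, so that no extra data is hiding on the $\Lambda$-side. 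This is exactly the ``unital normalization'' step you allude to, and once it is written out carefully the equivalence of categories is complete.
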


\medskip
We give a construction which will be used in the definition of equivariant factorization homology:
the associated functor of a unital $\Lambda$-sequence. This construction specializes to the 
monad associated to a reduced operad of \cite{May84}; it
also appears in the definition of the circle product $\odot$.
Assume that $(\mathscr{W},\otimes,\mathcal{J})$ is a cocomplete symmetric monoidal
category tensored over  $\Gtop$.
\begin{con} \label{cons:covariantFunctorXdot}
  Let $X \in
  \mathscr{W}_{\mathcal{J}}$ be an object under the unit. Define $X^{*}: \Lambda
\to \mathscr{W}$ to be the covariant functor that 
sends $\mathbf{n}$ to $X^{\otimes n}$. 
On morphisms, it sends the permutations to permutations of the $X$'s
and sends the injection $s_i^k: \mathbf{k-1} \to \mathbf{k}$ for $1 \leq i \leq k$ to the map
\begin{equation*}
\begin{tikzcd}
(s_i^k)_{*}:  X^{\otimes {k-1}} \cong X^{\otimes {i-1}} \otimes \mathcal{J} \otimes
X^{\otimes {k-i}}
\arrow{rr}{\mathrm{id}^{i-1} \otimes \eta \otimes \mathrm{id}^{k-i}} & &
X^{\otimes k}, 
\end{tikzcd}
\end{equation*}
where $\eta:
\mathcal{J} \to X$ is the unit map of $X$.
By convention, $X^{\otimes 0} = \mathcal{J}$.
\end{con}

 This defines a functor $(-)^{*}: \mathscr{W}_{\mathcal{J} } \to
  \mathrm{Fun}(\Lambda, \mathscr{W})$.
  Then one can form the categorical tensor product over $\Lambda$ of
  the contravariant functor $\mathscr{E}$ and the covariant functor $X^{*}$.

\begin{con}\label{defn:functor}
  Let $\mathscr{E} \in \Lambda^{op}_{*}[\Gtop]$ 
   be a unital $\Lambda$-sequence.  The functor 
\begin{equation*}
\mathrm{E}: \mathscr{W}_{\mathcal{J} } \to \mathscr{W}_{\mathcal{J}}
\end{equation*}
associated to $\mathscr{E}$ is defined to be
\begin{equation*}
\mathrm{E}(X) =  \mathscr{E} \otimes_{\Lambda} X^{*} = \coprod_{k \geq 0} \mathscr{E}(k)
\otimes  X^{\otimes k}/\approx,
\end{equation*}
where $(\alpha^{*} f, \mathbf{x}) \approx (f, \alpha_{*} \mathbf{x})$
for all $f \in \mathscr{E}(m)$, $ \mathbf{x} \in X^{\otimes n}$ and $\alpha \in \Lambda(\mathbf{n}
, \mathbf{m})$. The unit map of $\mathrm{E}(X)$ is given by 
$\mathcal{J} \cong * \otimes \mathcal{J} \cong \mathscr{E}(0) \otimes X^{\otimes 0} \to \mathrm{E}(X)$.
\end{con}

\begin{rem}
  \label{rmk:equi-relation}
It is sometimes useful to take the quotient in two steps and use the following alternative formula
for $\mathrm{E}$:
  \begin{equation*}
\mathrm{E}(X) =  \coprod_{k \geq 0} \mathscr{E}(k) \otimes_{\Sigma_k}  X^{\otimes k}/ \sim,
\end{equation*}
where $[(s_i^{k})^{*} f, \mathbf{x}] \sim [f, (s^{k}_{i})_{*} \mathbf{x}]$
for all $f \in \mathscr{E}(k)$, $ \mathbf{x} \in X^{\otimes k-1}$.
We will use $\approx$ or $\sim$ for the equivalence relation to be clear which
formula we are using and refer to $\sim$ as the base point identification.
\end{rem}

\begin{con} We focus on the following context of \autoref{defn:functor}.
\label{defn:LambdaObjectCircle}
  \begin{enumerate}
\item \label{item:associate1} Letting $\mathscr{W} = \Gtop$, one gets from
  $\mathscr{C} \in \Lambda^{op}_{*}[\Gtop]$ an associated functor:
\begin{equation*}
\mathrm{C}: \Gtop_{*}  \to \Gtop_{*}.
\end{equation*}

\item \label{item:associate2} Let $\mathscr{W} = (\Lambda^{op}[\Gtop], \Day, \mathscr{I}_{0})$ with the Day monoidal structure.
  Then $\mathscr{W}$ is tensored over $\Gtop$ in the obvious way by levelwise tensoring.
  One gets the circle product for
  $\mathscr{E} \in \Lambda^{op}_{*}[\Gtop]$ and $ \mathscr{F} \in \Lambda^{op}[\Gtop]_{\mathscr{I}_0}$:
\begin{equation*}
\mathscr{E} \odot \mathscr{F} := \mathscr{E} \otimes_{\Lambda} \mathscr{F}^{*} \in  \Lambda^{op}[\Gtop]_{\mathscr{I}_0}.
\end{equation*}
\end{enumerate}
\end{con}

These two cases are further related:  the $0$-th level functor
\begin{equation*}
\imath_0:\Gtop_{*} \to\Lambda^{op}[\Gtop]_{\mathscr{I}_{0}} , \ (\imath_0X)(n) =
\begin{cases}
  X, & n=0;\\
  \varnothing, & n>0;
\end{cases}
\end{equation*}
gives an inclusion of a full symmetric monoidal subcategory, so we have
\begin{equation}
  \label{eq:monad-and-circle}
\imath_0 (\mathrm{E}X) = \imath_0( \mathscr{E} \otimes_{\Lambda} X^{*}) \cong \mathscr{E}
\otimes_{\Lambda} (\imath_0(X)^{*}) = \mathscr{E} \odot \imath_0 X.
\end{equation}
In words, the reduced monad construction is what happens at the 0-space of the circle product.
Using this, one can show:
\begin{prop}(\cite[Proposition 6.2]{MZZ}) \label{prop:functorOfCircle}
  Let $\mathrm{E},\mathrm{F}: \Gtop_{*} \to \Gtop_{*}$
  be the functors associated to $\mathscr{E}$ and
  $\mathscr{F}$. Then the functor associated to $\mathscr{E} \odot \mathscr{F}$ is $\mathrm{E} \circ
  \mathrm{F}$. 
\end{prop}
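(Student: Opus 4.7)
The plan is to deduce the proposition formally from two inputs already established in the excerpt: the compatibility identity $\imath_0(\mathrm{E}X) \cong \mathscr{E} \odot \imath_0 X$ recorded in equation \eqref{eq:monad-and-circle}, and the associativity of the circle product $\odot$ on $\Lambda^{op}[\mathscr{V}]_{\mathscr{I}_0}$ that comes from its monoidal structure $(\Lambda^{op}_{\mathcal{I}}[\mathscr{V}], \odot, \mathscr{I}_1)$. The slogan is that the associated-functor construction is ``just the zero space'' of the circle product, so associativity of $\odot$ transports directly into associativity of composition of the associated functors.

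First I unpack what must be shown. By \autoref{defn:functor}, the functors are $\mathrm{E}(X) = \mathscr{E} \otimes_\Lambda X^*$ and $\mathrm{F}(X) = \mathscr{F} \otimes_\Lambda X^*$, and the functor associated to $\mathscr{E} \odot \mathscr{F}$ is $(\mathscr{E} \odot \mathscr{F}) \otimes_\Lambda X^*$. So I need a natural isomorphism $\mathrm{E}(\mathrm{F}(X)) \cong (\mathscr{E} \odot \mathscr{F})(X)$ of objects in $\mathscr{V}_{\mathcal{I}}$, compatible with the unit maps.

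Next I apply the fully faithful inclusion $\imath_0 \colon \mathscr{V}_{\mathcal{I}} \hookrightarrow \Lambda^{op}[\mathscr{V}]_{\mathscr{I}_0}$ (a full symmetric monoidal subcategory, per the discussion preceding \eqref{eq:monad-and-circle}) and compute in the larger category. Two successive applications of \eqref{eq:monad-and-circle} yield
\begin{equation*}
\imath_0(\mathrm{E}(\mathrm{F}(X))) \;\cong\; \mathscr{E} \odot \imath_0(\mathrm{F}(X)) \;\cong\; \mathscr{E} \odot (\mathscr{F} \odot \imath_0 X).
\end{equation*}
Associativity of $\odot$ then rebrackets the right-hand side as $(\mathscr{E} \odot \mathscr{F}) \odot \imath_0 X$, and a third application of \eqref{eq:monad-and-circle}, now read in reverse, identifies this with $\imath_0((\mathscr{E} \odot \mathscr{F})(X))$. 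Fully faithfulness of $\imath_0$ lets me cancel it, producing the desired natural isomorphism.

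The only substantive input here is the associativity of $\odot$, proved in \cite{MZZ}; granted that, the proposition is essentially formal. The main thing to watch, and really the only non-trivial bookkeeping, is naturality in $X$ together with compatibility with the base-point identification of \autoref{rmk:equi-relation}: one should verify on representatives $(f, \mathbf{x})$ that the $\sim$-relation produced by composing the two coends matches the $\sim$-relation coming from $\mathscr{E} \odot \mathscr{F}$. This is a direct unwinding of \autoref{cons:covariantFunctorXdot}, using that $(-)^*$ is an equivalence of symmetric monoidal categories so that $X \mapsto \imath_0 X$ and $X \mapsto X^*$ carry the unit map of $X \in \mathscr{V}_{\mathcal{I}}$ coherently into both constructions.
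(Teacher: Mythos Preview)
Your argument is correct and follows precisely the approach the paper indicates: the paper does not spell out a proof but merely says ``Using this [i.e.\ \eqref{eq:monad-and-circle}], one can show'' and cites \cite{MZZ}; you have carried out exactly that hinted argument by applying \eqref{eq:monad-and-circle} twice, invoking associativity of $\odot$, and reading \eqref{eq:monad-and-circle} backwards. One small remark: rather than appealing to full faithfulness of $\imath_0$ to cancel it, you can simply evaluate at level $0$, since $\imath_0$ has evaluation-at-zero as a retraction.
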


A monad is a monoid in the functor category.
Using the associativity of the circle product and \autoref{eq:monad-and-circle}, it is easy
to prove that
when $\mathscr{C}$ is an operad, the associated functor $\mathrm{C}$ is a
monad; and that when $\mathscr{F}$ is a left/right module over the monoid
$\mathscr{C}$ in $(\Lambda^{op}_{*}[\Gtop], \odot)$, the
associated functor $\mathrm{F}$ is a left/right module over $\mathrm{C}$.
The following construction gives examples.
\begin{con}(\cite[Section 8]{MZZ})
  \label{prop:endoperad}
Suppose that we have a $\Gtop$-enriched symmetric monoidal category $(\mathscr{W}, \otimes,
\mathcal{J})$ such that
$\ul{\mathscr{W}}(\mathcal{J}, Y) \cong *$ for all objects $Y$
of $\mathscr{W}$. 
Then we can construct a $\Lambda^{op}_{*}[\Gtop]$-enriched category
$\mathcal{H}_{\mathscr{W}}$. The objects are the same as those of
$\mathscr{W}$, while the enrichment is given by 
\begin{equation*}
\ul{\mathcal{H}_{\mathscr{W}}}(X,Y) =  \ul{\mathscr{W}}(X^{\otimes *},Y).
\end{equation*}
The definition of the composition in $\mathcal{H}_{\mathscr{W}}$ is similar to the structure maps of
an endomorphism operad. So, for any objects $X, Y, Z$ of $\mathscr{W}$,
$\ul{\mathcal{H}_{\mathscr{W}}}(Y,Y)$ is a monoid in $(\Lambda^{op}_{*}[\Gtop],
\odot)$, $\ul{\mathcal{H}_{\mathscr{W}}}(X,Y)$ is a left module over it, and
$\ul{\mathcal{H}_{\mathscr{W}}}(Y,Z)$ is a right module.
By \autoref{thm:operad}, $\ul{\mathcal{H}_{\mathscr{W}}}(Y,Y)$ is a unital operad,
and it is called the endomorphism operad of $Y$.
The assumption $\ul{\mathscr{W}}(\mathcal{J}, Y) \cong *$ is
automatically satisfied if $\mathscr{W}$ is coCartesian monoidal.
\end{con}

\begin{exmp} \label{exmp:DMDV}
 In \autoref{sec:tangential}, we construct a $\Gtop$-enriched category
 $(\mathrm{Mfld}^{\theta}_{G,n},\amalg, \varnothing)$ with a designated element $V \in \mathrm{Mfld}^{\theta}_{G,n}$. Applying
 \autoref{prop:endoperad} to $\mathscr{W} = \mathrm{Mfld}^{\theta}_{G,n}$,
 we obtain for any $M \in \mathrm{Mfld}^{\theta}_{G,n}$ a $\Lambda$-sequence
\begin{equation*}
\mathscr{D}_{\myM}^{\theta} = \ul{\mathcal{H}_{\mathscr{W}}}(V, M).
\end{equation*}
Then, $\mathscr{D}^{\theta}_V = \ul{\mathcal{H}_{\mathscr{W}}}(V, V)$
is a monoid in $(\Lambda^{op}_{*}[\Gtop], \odot)$
and $\mathscr{D}_{\myM}^{\theta}$ is a right module over it.
Translating by \autoref{thm:operad}, $\mathscr{D}^{\theta}_V$ is a reduced
operad in $(\Gtop , \times)$.
By \autoref{prop:functorOfCircle},  $\mathrm{D}^{\theta}_V$ is a monad and $\mathrm{D}^{\theta}_{\myM}$ is a right
module over $\mathrm{D}^{\theta}_V$.
\end{exmp}

We will use that the circle product is strong symmetric monoidal in the first variable:
\begin{prop}(\cite[Proposition 4.7]{MZZ})
  \label{prop:circledistribute}
  For any $\mathscr{E} \in \Lambda^{op}[\Gtop]_{\mathscr{I}_0}$,
  the functor $ - \odot \mathscr{E}$ on $(\Lambda^{op}(\Gtop)_{\mathscr{I}_{0}},
  \Day ,\mathscr{I}_{0})$ is strong symmetric monoidal. That is, the circle product
  distributes over the Day convolution:
  for any $\mathscr{D}, \mathscr{D}' \in \Lambda^{op}(\Gtop)_{\mathscr{I}_{0}}$, we have 
\begin{equation*}
(\mathscr{D} \Day  \mathscr{D}') \odot \mathscr{E} \cong (\mathscr{D} \odot
\mathscr{E}) \Day  (\mathscr{D}' \odot \mathscr{E}).  
\end{equation*}
\end{prop}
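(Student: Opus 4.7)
The plan is to reduce everything to a coend manipulation using the key structural fact that the covariant functor $\mathscr{E}^{*}: \Lambda \to \Lambda^{op}[\mathscr{V}]_{\mathscr{I}_{0}}$ from \autoref{cons:covariantFunctorXdot} is strong symmetric monoidal. Concretely, since the symmetric monoidal product on $\Lambda$ is $\vee$ (wedge) and on the target is $\Day$, the unit map of $\mathscr{E}$ induces a canonical isomorphism
\begin{equation*}
\mathscr{E}^{*}(\mathbf{k} \vee \mathbf{l}) \;\cong\; \mathscr{E}^{*}(\mathbf{k}) \Day \mathscr{E}^{*}(\mathbf{l}),
\end{equation*}
natural in both variables. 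I would first record this monoidality (it is essentially the content of the equivalence $(-)^{*}: \mathscr{W}_{\mathcal{J}} \simeq \mathrm{Fun}^{\otimes}(\Lambda, \mathscr{W})$ mentioned in the remark after \autoref{cons:covariantFunctorXdot}, applied with $\mathscr{W} = \Lambda^{op}[\mathscr{V}]$ under $\Day$).

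Next I would write both sides as coends and compute. Unwinding definitions, the left side is
\begin{equation*}
(\mathscr{D} \Day \mathscr{D}') \odot \mathscr{E} \;=\; (\mathscr{D} \Day \mathscr{D}') \otimes_{\Lambda} \mathscr{E}^{*} \;=\; \int^{\mathbf{n}} (\mathscr{D} \Day \mathscr{D}')(\mathbf{n}) \otimes \mathscr{E}^{*}(\mathbf{n}),
\end{equation*}
and I would substitute the standard coend formula for Day convolution,
\begin{equation*}
(\mathscr{D} \Day \mathscr{D}')(\mathbf{n}) \;=\; \int^{\mathbf{k}, \mathbf{l}} \Lambda(\mathbf{n}, \mathbf{k} \vee \mathbf{l}) \otimes \mathscr{D}(\mathbf{k}) \otimes \mathscr{D}'(\mathbf{l}).
\end{equation*}
A Fubini swap of the coends, followed by co-Yoneda in the $\mathbf{n}$-variable, collapses the inner coend to $\mathscr{E}^{*}(\mathbf{k} \vee \mathbf{l})$. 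Applying the strong monoidality of $\mathscr{E}^{*}$ then replaces this by $\mathscr{E}^{*}(\mathbf{k}) \Day \mathscr{E}^{*}(\mathbf{l})$, producing
\begin{equation*}
\int^{\mathbf{k}, \mathbf{l}} \mathscr{D}(\mathbf{k}) \otimes \mathscr{D}'(\mathbf{l}) \otimes \bigl(\mathscr{E}^{*}(\mathbf{k}) \Day \mathscr{E}^{*}(\mathbf{l})\bigr).
\end{equation*}

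The final step is to recognize this as the Day convolution of two circle products. Because $\Day$ on $\Lambda^{op}[\mathscr{V}]$ is a left adjoint in each variable, it commutes with the coends and with the $\mathscr{V}$-tensoring; splitting the double coend via Fubini gives
\begin{equation*}
\Bigl(\int^{\mathbf{k}} \mathscr{D}(\mathbf{k}) \otimes \mathscr{E}^{*}(\mathbf{k})\Bigr) \Day \Bigl(\int^{\mathbf{l}} \mathscr{D}'(\mathbf{l}) \otimes \mathscr{E}^{*}(\mathbf{l})\Bigr) \;=\; (\mathscr{D} \odot \mathscr{E}) \Day (\mathscr{D}' \odot \mathscr{E}).
\end{equation*}
Naturality of each rewriting makes the composite isomorphism natural in $\mathscr{D}$ and $\mathscr{D}'$, and a parallel computation with $\mathscr{I}_{0}$ in place of one of the factors gives the unit isomorphism $\mathscr{I}_{0} \odot \mathscr{E} \cong \mathscr{I}_{0}$; together with the evident coherence between $\Day$-associators on both sides these assemble into a strong symmetric monoidal structure on $-\odot \mathscr{E}$.

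The step most likely to require care is the compatibility with the under-unit structure, i.e.\ staying inside $\Lambda^{op}[\mathscr{V}]_{\mathscr{I}_{0}}$ rather than the ambient $\Lambda^{op}[\mathscr{V}]$: I would check that the canonical map $\mathscr{I}_{0} \to \mathscr{D} \odot \mathscr{E}$ coming from the units of $\mathscr{D}$ and $\mathscr{E}$ is respected by the isomorphism above, which boils down to the fact that the monoidality iso of $\mathscr{E}^{*}$ sends the unit map $\mathscr{I}_{0} \to \mathscr{E}^{*}(\mathbf{0})$ to the composite $\mathscr{I}_{0} \cong \mathscr{I}_{0} \Day \mathscr{I}_{0} \to \mathscr{E}^{*}(\mathbf{k}) \Day \mathscr{E}^{*}(\mathbf{l})$. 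Everything else is formal coend calculus.
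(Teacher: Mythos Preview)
Your argument is correct and is the standard coend-calculus proof: expand Day convolution as a coend, apply Fubini and co-Yoneda to collapse to $\mathscr{E}^{*}(\mathbf{k}\vee\mathbf{l})$, invoke the strong monoidality of $\mathscr{E}^{*}$, and then use cocontinuity of $\Day$ in each variable to split the result. The one place worth a line of justification is the identity $(A\otimes B)\otimes(X\Day Y)\cong (A\otimes X)\Day(B\otimes Y)$ for $A,B\in\mathscr{V}$ and $X,Y\in\Lambda^{op}[\mathscr{V}]$, but this follows immediately from the levelwise nature of the $\mathscr{V}$-tensoring together with the coend formula for $\Day$.

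As for comparison with the paper: the paper does not actually prove this proposition. It is stated with a citation to \cite[Proposition~4.7]{MZZ} and used as a black box (notably in the proof of \autoref{prop:FHonUnion}). So there is no in-paper argument to compare against; your coend computation is exactly the kind of proof one would expect to find in the cited reference.
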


\subsection{Equivariant bundles}
\label{sec:equiBundle}
As pointed out in the introduction, we define $\theta$-framed embeddings using
maps between
equivariant bundles.
In this subsection, we list some preliminary results on equivariant vector
bundles for the reader's reference. The proofs of the results as well as a clarification of different notions
of equivariant fiber bundles can be found in  \cite{ZouBundle}.

Let $G$ and $\Pi$ be compact Lie groups, where $G$ is the ambient
action group and $\Pi$ is the structure group.
\begin{defn} 
  \label{defn:Gvector}
  A $G$-$n$-vector bundle a map $p:E \to B$ such that the following statements hold:
  \begin{enumerate}
    \item The map $p$ is a non-equivariant $n$-dimensional vector bundle;
    \item Both $E$ and $B$ are $G$-spaces and $p$ is $G$-equivariant;
    \item \label{item:Gvector3} The $G$-action is linear on fibers.
    \end{enumerate}
\end{defn}

\begin{defn}
  \label{defn:Gprincipal}
  A principal $G$-$\Pi$-bundle is a map ${p:P \to B}$ such that the following statements hold:
    \begin{enumerate}
      \item The map $p$ is a non-equivariant principal $\Pi$-bundle;
      \item Both $P$ and $B$ are $G$-spaces and $p$ is $G$-equivariant;
      \item The actions of $G$ and $\Pi$ commute on $P$.
    \end{enumerate}
\end{defn}

\begin{rem}
  This is called a principal $(G,\Pi)$-bundle in \cite[IV1]{LMS86}.
\end{rem}

  \begin{thm}
    \label{thm:G-structure-1}
    There is an equivalence of categories between
    \{$G$-$n$-vector bundles over $B$\}
    and \{principal $G$-$O(n)$-bundles over $B$\}. 
  \end{thm}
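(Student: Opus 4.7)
The plan is to exhibit the standard pair of mutually inverse constructions: the orthonormal frame bundle in one direction and the Borel-style associated bundle in the other, then check that the $G$-actions match up correctly on both sides.

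First I would construct the frame bundle functor. Given a $G$-$n$-vector bundle $p: E \to B$, I choose any non-equivariant fiberwise Euclidean metric (via a partition of unity on $B$) and average it against $G$ using Haar measure; condition \autoref{item:Gvector3} is exactly what guarantees the average stays fiberwise and remains a genuine inner product. With this $G$-invariant metric in hand, define $\mathrm{Fr}(E) \to B$ by $\mathrm{Fr}(E)_b = \mathrm{O}(\mathbb{R}^n, E_b)$, with $O(n)$ acting on the right by precomposition and $G$ acting on the left via $g \cdot \phi := g_* \circ \phi$, where $g_*: E_b \to E_{gb}$ is the fiber map induced by $g$. Condition \autoref{item:Gvector3} ensures $g_*$ is linear isometric, so the composite lies again in $\mathrm{O}(\mathbb{R}^n, E_{gb})$. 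The two actions commute by construction, and local triviality of $E$ as a vector bundle transports to local triviality of $\mathrm{Fr}(E)$ as a principal $O(n)$-bundle.

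In the reverse direction, for a principal $G$-$O(n)$-bundle $q: P \to B$, form the associated bundle $E(P) := P \times_{O(n)} \mathbb{R}^n \to B$, with $G$ acting only on the $P$-coordinate. Because the $G$- and $O(n)$-actions on $P$ commute, the $G$-action descends through the quotient; because $O(n)$ acts linearly on $\mathbb{R}^n$, the induced $G$-action on each fiber is linear, verifying condition \autoref{item:Gvector3}. Local triviality of $P$ immediately yields local triviality of $E(P)$. To close the loop I would produce the natural isomorphisms $E(\mathrm{Fr}(E)) \to E$, $[\phi,v] \mapsto \phi(v)$, and $P \to \mathrm{Fr}(E(P))$, $z \mapsto \phi_z$ where $\phi_z(v) := [z,v]$; both are readily checked to be $G$-equivariant fiberwise isomorphisms, and the functoriality in morphisms over $B$ is immediate.

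The main obstacle is managing the choice of metric in the forward functor. A priori the frame bundle $\mathrm{Fr}(E)$ depends on the chosen $G$-invariant fiberwise inner product, so I would need to verify that any two such choices give canonically $G$-$O(n)$-isomorphic frame bundles --- this follows because any two $G$-invariant inner products differ by a $G$-equivariant positive self-adjoint automorphism of $E$, whose unique positive square root induces the desired isomorphism. Comparisons with stricter forms of equivariant local triviality (e.g. the $(G,\Pi)$-bundle notion of \cite{LMS86}) are a genuinely more delicate matter but are handled separately in the companion paper \cite{ZouBundle}; here I only need the equivariant bundles as defined in \autoref{defn:Gvector} and \autoref{defn:Gprincipal}, which makes the two functors essentially formal.
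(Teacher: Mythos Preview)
Your proposal is correct and follows the standard frame-bundle/associated-bundle construction. The paper itself does not prove this theorem in-text---it defers to the companion paper \cite{ZouBundle}---but the isomorphism $E \cong (\mathrm{Fr}_V(E) \times V)/O(n)$ stated in \autoref{prop:frv} (specialized to $V = \mathbb{R}^n$) confirms that the intended argument is exactly the pair of mutually inverse functors you describe.
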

The classical procedure of passing from $n$-vector bundles to principal $O(n)$-bundles is
called taking the space of admissible maps. The equivariant bundles mentioned
are both just non-equivariant bundles with $G$-actions, and the classical
procedure is compatible with the $G$-actions.

\medskip
A $G$-vector bundle $E \to B$ is $V$-trivial for some $n$-dimensional $G$-representation
$V$ if there is a $G$-vector bundle isomorphism $E \cong B \times V$.
Such an isomorphism is called a $V$-trivialization or $V$-framing of the bundle. This is analogous to the
case of non-equivariant vector bundles, except that equivariance adds in the representation $V$ that's part of the data.
However, the representation $V$ in the equivariant trivialization of a fixed vector bundle may not be unique. 

  \begin{exmp}(\cite[Examples 3.4 and 3.5]{ZouBundle})
 
\begin{enumerate}
\item Let $G=C_2$, $\sigma$ be the sign representation. The unit sphere, $S(2\sigma)$, is $S^1$ with the
  180 degree rotation action. As $C_2$-vector bundles,  
\begin{equation*}
S(2\sigma) \times \mathbb{R}^2 \cong S(2\sigma) \times 2\sigma.
\end{equation*}
\item  Take $V$ and $W$ to be any two representation
  of $G$ that are of the same dimension and take $B$ to have free $G$-action. Then $B \times V  \cong B \times W$.
\end{enumerate}  
  \end{exmp}
  
We do have the uniqueness of $V$ in the following case (\cite[Corollary 3.2]{ZouBundle}).
  \begin{prop}
\label{cor:trivialbundle}
    If $B$ has a $G$-fixed point, then $B \times V \cong B \times W$ only when $V \cong W$.
  \end{prop}

Equivariantly, $G$-representations serve the role of $\bR^n$. So it is natural
to consider the $V$-framing bundle $\mathrm{Fr}_V(E)$ for an orthogonal
  $n$-dimensonal representation $V$.
  
\begin{defn}
    \label{defn:frv}
  Let $p:E \to B$ be a $G$-$n$-vector bundle.
  Let $\mathrm{Fr}_V(E)$ be the space of admissible maps with the $G$-action
$  g(\psi) = g \psi \rho(g)^{-1}.$
\end{defn}
\noindent In other words, $\mathrm{Fr}_V(E)$ has the same underlying space as
$\mathrm{Fr}_{\bR^n}(E)$, but we think of admissible maps as mapping out of $V$ instead of
$\bR^n$. 

\medskip
Let  $H \subgroup G$ be a subgroup and $\mathrm{Rep}(H,\Pi)$ be the set:
\begin{equation*}
\mathrm{Rep}(H,\Pi)  = \{ \text{group homomorphism }\rho: H \to \Pi\}/ \Pi
 \text{-conjugation}.
\end{equation*}

A group homomorphism $\rho: H \to \Pi$ gives a subgroup $\Lambda_{\rho} \subgroup (\Pi \times G)$
via its graph:
$$\Lambda_{\rho} = \{(\rho(h) ,h)| h \in H\}.$$
Denote  the centralizer  of the image of $\rho$ in $\Pi$ by
$\mathrm{Z}_{\Pi}(\rho)$. It is a closed subgroup of $\Pi$, and we define
\begin{equation*}
  \mathrm{Z}_{\Pi}(\rho)  = \Pi \cap \mathrm{Z}_{\Pi \times G}(\Lambda_{\rho})
  = \{\ele \in \Pi| \ele \rho(h) = \rho(h) \ele  \text{ for all } h \in H\}.
\end{equation*}

Take $p: P \to B$ to be a principal $G$-$\Pi$-bundle. Then each
component $B_0 \subset B^H$ is associated to a homomorphism $[\rho] \in \mathrm{Rep}(H,\Pi)$: 
\begin{thm}
  \label{thm:LM} There is a well-defined map
  $\pi_0^H(B) \to \mathrm{Rep}(H,\Pi)$ by
  $$B_0 \mapsto \{\rho: H \to \Pi | \big(p^{-1}(B_0)\big)^{\Lambda_{\rho}} \not= \varnothing \}.$$
Furthermore, for any fixed representative $\rho$,  $ \big(p^{-1}(B_0)\big)^{\Lambda_{\rho}}  \to
B_0$ is a principal $Z_{\Pi}(\rho)$-bundle and $p^{-1}(B_0) \cong
\Pi \times_{Z_{\Pi}(\rho)} \big(p^{-1}(B_0)\big)^{\Lambda_{\rho}}$.
\end{thm}
This is essentially \cite[Theorem 12]{LM86} and is explained in \cite[Section
2.6]{ZouBundle}. Note that a principal $G$-$\Pi$-bundle morphism preserves the associated homomorphism $[\rho]$.

\medskip
There is a notion of the universal $G$-$\Pi$-bundle $E_G\Pi \to B_G\Pi$, so that principal
$G$-$\Pi$-bundles over a base $G$-space $B$ are classified by $G$-homotopy
classes of maps from $B$ to $B_G\Pi$.
 We denote the universal $G$-$n$-vector bundle by $\universal \to B_GO(n)$,
where $$\universal = E_GO(n) \times_{O(n)} \bR^n.$$






The $G$-homotopy type of the universal base can be obtained from information
about the fixed-point spaces of total space. We have
\begin{thm}(\cite[Theorem 2.17]{L82})
\label{thm:BGO(n)}
\begin{align*}
(B_GO(n))^{G} & \simeq \coprod_{[\rho] \in
                \mathrm{Rep}(G,O(n))}B\mathrm{Z}_{O(n)}(\rho); \\
    & \simeq \coprod_{[V] \in
                \mathrm{Rep}(G,O(n))}B (O(V)^G).
\end{align*}
 \end{thm}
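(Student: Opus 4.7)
The plan is to combine the two facts about equivariant principal bundles that have just been recorded: the component-wise reduction of structure group (\autoref{thm:LM}) and the fixed-point criterion for universality (\autoref{thm:universalbundle}). We apply both with $H = G$, $\Pi = O(n)$, and $\Gamma = O(n) \times G$, and we work directly with the total space $E_GO(n) \to B_GO(n)$.

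First, I decompose $(B_GO(n))^G$ by a representation invariant. \autoref{thm:LM} assigns to each component $B_0 \subset (B_GO(n))^G$ a conjugacy class $[\rho] \in \mathrm{Rep}(G,O(n))$, namely the unique class for which $(E_GO(n))^{\Lambda_\rho}$ is nonempty over $B_0$; for any $\rho'$ not conjugate to $\rho$ the $\Lambda_{\rho'}$-fixed points are empty over $B_0$. Letting $B_{[\rho]}$ denote the union of components with invariant $[\rho]$, this gives a set-theoretic decomposition
\begin{equation*}
(B_GO(n))^G = \coprod_{[\rho]\in\mathrm{Rep}(G,O(n))} B_{[\rho]}.
\end{equation*}

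Next, I identify each $B_{[\rho]}$. By the concluding assertion of \autoref{thm:LM} (and the remark immediately after it), the restriction of the universal bundle, after passing to $\Lambda_\rho$-fixed points, is a (nonequivariant) principal $Z_{O(n)}(\rho)$-bundle
\begin{equation*}
(E_GO(n))^{\Lambda_\rho} \longrightarrow B_{[\rho]}.
\end{equation*}
The graph subgroup $\Lambda_\rho = \{(\rho(g),g) : g \in G\}$ meets $O(n) \times \{e\}$ only in the identity, so $\Lambda_\rho \cap O(n) = \{e\}$ and \autoref{thm:universalbundle} forces $(E_GO(n))^{\Lambda_\rho} \simeq *$. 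A principal $Z_{O(n)}(\rho)$-bundle with contractible total space over a space of the homotopy type of a CW complex is a model for the classifying space, so $B_{[\rho]} \simeq B Z_{O(n)}(\rho)$.

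Finally, I translate to the representation form. For $V$ the $G$-representation corresponding to $\rho$, an element $a \in O(n)$ lies in $Z_{O(n)}(\rho)$ iff $a\rho(g) = \rho(g)a$ for all $g \in G$, which is precisely the condition that $a$ be a $G$-equivariant linear isometry of $V$; that is, $Z_{O(n)}(\rho) = O(V)^G$. Combining the two displays yields both equivalences.

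The main obstacle, as I see it, is justifying that the fixed-point map $(E_GO(n))^{\Lambda_\rho} \to B_{[\rho]}$ really is a principal $Z_{O(n)}(\rho)$-bundle and that $B_{[\rho]}$ has the homotopy type needed to conclude universality from contractibility of the total space. The first point is the nontrivial content of \autoref{thm:LM}, whose proof in \cite{LM86} uses a slice argument to produce local trivializations compatible with the reduced structure group. The second is mild: standard constructions of $E_GO(n)$ (e.g.\ as a bar construction on a $G$-CW model for $EO(n)$) produce a $G$-CW space whose fixed point sets and quotients are of the homotopy type of CW complexes, so the resulting principal $Z_{O(n)}(\rho)$-bundle is numerable and the contractibility of the total space suffices to identify it with the universal bundle.
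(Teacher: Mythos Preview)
Your argument is correct and follows exactly the route the paper indicates: it cites the result from \cite{L82} and remarks that it follows by combining the component-wise reduction of structure group in \autoref{thm:LM} with the fixed-point criterion for universality in \autoref{thm:universalbundle}. You have simply fleshed out that one-line sketch.
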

Here, $O(V)$ is the space of isometric self maps of $V$ with $G$ acting by conjugation.

\begin{exmp}
  Take $H=G=C_2$ and $\Pi=O(2)$. Then
  $$\mathrm{Rep}(C_2,O(2)) = \{\mathrm{id}, \text{ rotation}, \text{ reflection}\}.$$
  For $\rho=\mathrm{id}$ or $\rho=\text{rotation}$, $Z_{\Pi}(\rho)=O(2)$. For
  $\rho=\text{reflection}$, $Z_{\Pi}(\rho) \cong \bZ/2 \times \bZ/2$. So 
\begin{equation*}
(B_{C_2}O(2))^{C_2} \simeq BO(2) \amalg BO(2) \amalg B(\bZ/2 \times \bZ/2).
\end{equation*}
\end{exmp}

  One can make explicit the classifying maps of $V$-trivial bundles as
  follows.  
  A $G$-map $\theta: * \to B_GO(n)$ gives the following data:
  it lands in one of the $G$-fixed components of $B_GO(n)$, indexed by a representation class $[V]$;
  its image is a $G$-fixed point $b \in B_GO(n)$.
  
 \begin{prop}
\label{rmk:classifyingV}
   The pullback of the universal bundle along this map is exactly
   $\theta^{*}\universal \cong V$ as a $G$-vector bundle over $*$.
\end{prop}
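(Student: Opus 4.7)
The plan is to combine the Lashof--May fixed-point classification (\autoref{thm:LM}) with a direct computation of the pullback as an associated bundle. Since $\theta$ is $G$-equivariant and its source is a point, the image $b := \theta(\mathrm{pt})$ lies in $(B_GO(n))^G$, hence in a single component $B_0$ enumerated by \autoref{thm:BGO(n)}. Applied to the universal principal $G$-$O(n)$-bundle $p: E_GO(n) \to B_GO(n)$, \autoref{thm:LM} labels $B_0$ (up to $O(n)$-conjugacy) by a homomorphism $\rho: G \to O(n)$, characterised by the property that $p^{-1}(b)^{\Lambda_{\rho}} \neq \varnothing$. This is precisely the representation class $[V]$ referenced in the statement; that is, $V$ is $\mathbb{R}^n$ equipped with the $G$-action $\rho$.

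Because $\theta$ factors through the point $b$, the pullback computes as
\begin{equation*}
\theta^* \universal \;=\; p^{-1}(b) \times_{O(n)} \mathbb{R}^n,
\end{equation*}
on which $G$ acts through its action on the principal-bundle fiber $p^{-1}(b)$. Pick any $z \in p^{-1}(b)^{\Lambda_{\rho}}$; then $\ele \mapsto z\ele$ is an $O(n)$-equivariant bijection $O(n) \to p^{-1}(b)$, and the ensuing map
\begin{equation*}
\phi: V \;\longrightarrow\; \theta^* \universal, \qquad v \;\longmapsto\; [z, v],
\end{equation*}
is a linear isomorphism of $n$-dimensional real vector spaces. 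The remaining task is to check that $\phi$ is $G$-equivariant.

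I would verify this by unwinding the paper's convention $\ele z = z\ele^{-1}$ for the left $O(n)$-action on $p^{-1}(b)$: the condition $(\rho(g), g) \cdot z = z$ translates, under commuting left $G$- and $O(n)$-actions, to $g \cdot z = \rho(g)^{-1} z$, and the associated-bundle identity $[h z', v] = [z', h^{-1} v]$ for $h \in O(n)$ then gives $g \cdot [z, v] = [\rho(g)^{-1} z, v] = [z, \rho(g) v]$. Thus $\phi$ intertwines the canonical $G$-representation on $V$ with the $G$-action on $\theta^* \universal$, completing the proof. The main obstacle is purely bookkeeping: aligning the left/right $O(n)$-action convention, the equivalence relation in the associated bundle, and the $\Lambda_{\rho}$-fixed-point condition so that the induced $G$-action on the fiber emerges as $\rho$ itself rather than $\rho^{-1}$ or an $O(n)$-conjugate of it.
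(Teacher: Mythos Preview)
Your proof is correct. The paper itself does not include a proof of this proposition; it is stated as a fact in the preliminary section on equivariant bundles, with proofs deferred to the companion paper \cite{ZouBundle}. Your argument, which combines the Lashof--May fixed-point classification (\autoref{thm:LM}) with a direct associated-bundle computation over the fiber $p^{-1}(b)$, is exactly the natural route and uses precisely the tools the paper has set up for this purpose. The bookkeeping you flag as the main obstacle is handled correctly: under the paper's convention $\ele z = z\ele^{-1}$, the $\Lambda_\rho$-fixed condition on $z$ unwinds to $g\cdot z = z\rho(g)$, and the associated-bundle relation $[z\ele,v]=[z,\ele v]$ then gives $g\cdot[z,v]=[z,\rho(g)v]$, so $\phi$ is $G$-equivariant as claimed.
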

   
  The loop space of $B_GO(n)$ at the base point $b$, $\Omega_b B_GO(n)$, is a $G$-space
  with the pointwise $G$-action on the loops.
  Via concatenation of loops, it is an $A_{\infty}$-algebra in $G$-spaces.
  Using the Moore loop space
 \begin{equation*}
\moore_b B_GO(n) = \{(l,\alpha) \in \bR_{\geq 0} \times \mathrm{Map}(\bR_{\geq
    0}, B_GO(n)) | \alpha(0) = b, \ \
 \alpha(t)=b \text{ for }t \geq l\},
\end{equation*}
we may strictify  $\Omega_bB_GO(n)$ to a $G$-monoid.
\begin{defn}
    A $G$-monoid is a monoid in $G$-spaces, that is, an underlying monoid such that the
  multiplication is $G$-equivariant. A morphism of $G$-monoids is an equivalence if it is a weak
  $G$-equivalence.
\end{defn}
\begin{thm}(\cite[Theorem 3.12]{ZouBundle})
  \label{cor:monoidmap}
  Let $b$ be a fixed point in the $V$-indexed component of $(B_GO(n))^G$. 
\begin{enumerate}
\item 
  There is a $G$-homotopy equivalence $\Omega_b B_GO(n) \simeq O(V)$;
\item There is an equivalence of $G$-monoids $\moore_b B_GO(n) \simeq O(V)$.
\end{enumerate}
\end{thm}
The equivalence of $G$-monoids is explicitly given by a zigzag (see \autoref{rem:zigzag}).
 \autoref{cor:monoidmap} is used in \autoref{thm:autoV} to understand
 the automorphism space of a framed disk $V$.

\section{Tangential structures and factorization homology}
\label{chap:FH}

\subsection{Equivariant tangential structures}
\label{sec:tangential}

In this subsection we fix a tangential structure $\theta$ and construct two categories.
The first one is $\mathrm{Vec}^{\theta}_{G,n}$,
the category of $n$-dimensional $\theta$-framed equivariant bundles and $\theta$-framed
bundle maps.
The second one is $\mathrm{Mfld}^{\theta}_{G,n}$, the category of smooth $n$-dimensional
$\theta$-framed $G$-manifolds and $\theta$-framed embeddings. The category $\mathrm{Mfld}^{\theta}_{G,n}$ is a
subcategory of $\mathrm{Vec}^{\theta}_{G,n}$; both  $\mathrm{Mfld}^{\theta}_{G,n}$ and
$\mathrm{Vec}^{\theta}_{G,n}$ are enriched over $\Gtop$.
If we let $\theta$ vary, both constructions define covariant functors from
$\mathcal{TS}$ to categories.

Recall that $\universal$ is the universal $G$-$n$-vector bundle over $B_GO(n)$. Pulling back along
  the tangential structure $\theta: B \to B_G O(n)$ gives a bundle $\theta^{*}\universal$ over
  $B$. This is meant to be the universal $\theta$-framed vector bundle.
For an $n$-dimensional smooth $G$-manifold $M$,
the tangent bundle of $M$ is a $G$-$n$-vector bundle.
It is classified by a $G$-map up to $G$-homotopy:
\begin{equation*}
 \tau: M \to B_G O(n).
\end{equation*}

\begin{defn}
  A $\theta$-framing on a $G$-$n$-vector bundle $E \to M$ is a $G$-$n$-vector bundle map
  $\phi_E: E \to \theta^{*}\universal$.
  A $\theta$-framing on a smooth $G$-manifold $M$ is a $\theta$-framing $\phi_M$ on its tangent
  bundle. We abuse notations and refer to the map 
  on the base spaces as $\phi_M$ as well.
\end{defn}

Note that for a manifold $M$ to be $\theta$-framed, it must be of dimension
$n$. We consider the empty set to be uniquely $\theta$-framed for any $n$ and
any $\theta: B \to B_GO(n)$.

A bundle has a $\theta$-framing if and only if its classifying map $\tau: M \to B_GO(n)$
has a factorization up to $G$-homotopy through $B$; see diagram~\autoref{eq:theta-framing}.
However, a factorization $\tau_B: M \to B$ does not uniquely determine a $\theta$-framing
$\phi_E: E \to \theta^{*}(\universal)$. Indeed, a bundle map $\phi_E: E \to \theta^{*}(\universal)$ is the same
data as a map $\tau_B: M \to B$ on the base plus a homotopy between the two classifying maps from
$M$ to $B_GO(n)$. For a detailed proof, see \autoref{cor:HomVSMapoverB} with
\autoref{defn:mapOverB}.
\begin{equation}
  \label{eq:theta-framing}
  \begin{tikzcd}
    & B \ar[d,"\theta"] \\
  M \ar[r,"\tau"'] \ar[ur,dotted, "\tau_B","h^{\curvearrowright}"'] & B_GO(n)
  \end{tikzcd}
\end{equation}

\begin{exmp}
  As seen in \autoref{rmk:classifyingV}, the tangential structure $\mathrm{fr}_V: * \to
   B_GO(n)$ characterizes $V$-trivializations.
  We call it the $V$-framing tangential structure, and emphasize that is not only a space $B= *$ but
  also a map $\mathrm{fr}_V$.
\end{exmp}


\begin{defn}
  \label{defn:theta-bundle-map}
Given two $\theta$-framed bundles $E_1,E_2$ with framings $\phi_1, \phi_2$, the space of
$\theta$-framed bundle maps between them is
defined as:
\begin{equation}
  \label{eq:theta-hom}
\mathrm{Hom}^{\theta}(E_1, E_2) := \mathrm{hofib}\big( \mathrm{Hom}(E_1,E_2) \overset{\phi_2 \circ -}{
  \longrightarrow } \mathrm{Hom}(E_1, \theta^{*}\universal)\big),
\end{equation}
where $\mathrm{Hom}(E_1, \theta^{*}\universal)$ is based at $\phi_1$.
\end{defn}

  We use the following model for the homotopy fiber in \autoref{eq:theta-hom}:
\begin{align*}
  \mathrm{Hom}^{\theta}(E_1,E_2)
  =\{(f, \alpha, l)| & f \in \mathrm{Hom}(E_1,E_2), \alpha \in \mathrm{Map}({\bR_{\geq 0}}, \mathrm{Hom}(E_1,\theta^{*}\universal)), \\
                         &l \in \mathrm{Map}({\mathrm{Hom}(E_1,E_2)}, {\bR_{\geq0}}) \text{ such that }  \\
    & l \text{ is locally constant}, \\
    & \alpha(0) = \phi_1, \alpha(t) = \phi_2 \circ f \text{ for } t \geq l(f)\}.
\end{align*}
Here, the function $l$ is the length of the Moore paths and locally constant means being constant on path components.
The $\theta$-framed bundle maps have unital and associative composition,
with the unit in $\mathrm{Hom}^{\theta}(E,E)$ given by $(\mathrm{id}_E,
\phi_{\mathrm{const}},0_{\mathrm{const}})$.
Treating the path data $l$ as $1_{\mathrm{const}}$, the composition is defined up to homotopy as: 
\begin{equation*}
  \begin{array}{ccc}
    \mathrm{Hom}^{\theta}(E_2,E_3) \times \mathrm{Hom}^{\theta}(E_1, E_2) &  \to &
                                                                            \mathrm{Hom}^{\theta}(E_1,E_3);\\
    \big((g,\beta),(f,\alpha)\big) & \mapsto & (g \circ f, \lambda),
  \end{array}
\end{equation*}
\begin{equation*}
\text{ where }\lambda(t) = \left\{ 
\begin{array}{l@{\quad \text{ when}\quad}l}
 \alpha(2t),& 0 \le t \le 1/2; \\
 \beta(2t-1) \circ f,& 1/2 < t \le 1.
\end{array}\right.
\end{equation*}

\medskip
  Note that in the definition of $\mathrm{Hom}^{\theta}(E_1,E_2)$,
  everything is taken non-equivariantly. The spaces $\mathrm{Hom}(E_1,E_2)$ and $\mathrm{Hom}(E_1,
  \theta^{*}\universal)$  have $G$-actions by conjugation.
  Since
  $\phi_1$ and $\phi_2$ are $G$-maps, the homotopy fiber
  $\mathrm{Hom}^{\theta}(E_1,E_2)$ inherits the conjugation $G$-action.
So we have built a $G \mathrm{Top}$-enriched category $\mathrm{Vec}^{\theta}_{G,n}$ of $\theta$-framed bundles and $\theta$-framed bundle maps.
  
\begin{defn}
\label{def:embedding}
The space of $\theta$-framed embeddings between two $\theta$-framed manifolds is defined as the
pullback displayed in the following diagram of $G$-spaces:
\begin{equation}
  \label{eq:emb-space1}
  \begin{tikzcd}
    \mathrm{Emb}^{\theta}(M,N) \ar[r] \ar[d] & \mathrm{Hom}^{\theta}(\mathrm{T}M,\mathrm{T}N) \ar[d] \\
    \mathrm{Emb}(M,N) \ar[r, "d"] & \mathrm{Hom}(\mathrm{T}M,\mathrm{T}N)
  \end{tikzcd}
\end{equation}
Here, $\mathrm{Emb}(M,N)$ is the space of smooth embeddings and the map $d$ takes an embedding to its
derivative. For the empty manifold, we define
$\mathrm{Emb}^{\theta}(\varnothing,N) = *$ and
$\mathrm{Emb}^{\theta}(M,\varnothing) = \varnothing$
unless $M = \varnothing$.
The category $\mathrm{Mfld}^{\theta}_{G,n}$ has objects $\theta$-framed
manifolds (including the empty set) and morphism spaces $\mathrm{Emb}^{\theta}$.
\end{defn}

\begin{rem}
  \label{rem:emb-data} Most of the time, we drop the Moore-path-length data and write an
  element of $\mathrm{Emb}^{\theta}(M,N)$ as a package of a map $f$ and a homotopy
  $\bar{f}=(f,\alpha)$, with $f \in \mathrm{Emb}(M,N)$ and $\alpha: [0,1] \to
  \mathrm{Hom}(\mathrm{T}M, \mathrm{T}N)$ satisfying $\alpha(0) = \phi_M$ and
  $\alpha(1)= \phi_N \circ  df$.
  There is a functor $\mathrm{Mfld}^\theta_{G,n} \to \mathrm{Mfld}_{G,n}$ by forgetting the tangential structure.
  It sends $\bar{f} \in \mathrm{Emb}^{\theta}(M, N)$ to $f \in \mathrm{Emb}(M,N)$.
\end{rem}

Let $\amalg$ be the disjoint union of $\theta$-framed vector bundles or manifolds and $\varnothing$
be the empty bundle or manifold. 
Both $(\mathrm{Vec}^{\theta}_{G,n}, \amalg, \varnothing)$ and $(\mathrm{Mfld}^{\theta}_{G,n}, \amalg, \varnothing)$  are
$\Gtop$-enriched symmetric monoidal categories. In both categories, $\varnothing$ is the initial object.
In $ \mathrm{Vec}^{\theta}_{G,n}$, $\amalg$ is the coproduct, but it is not so in $\mathrm{Mfld}^{\theta}_{G,n}$.
\begin{rem}
  We need the length of the Moore path to be locally constant as opposed to constant for the enrichment to work. Namely, the map
\begin{equation*}
\mathrm{Hom}^{\theta}(E_1, E'_1) \times \mathrm{Hom}^{\theta}(E_2, E'_2) \to
\mathrm{Hom}^{\theta}(E_1 \amalg E_2 , E'_1\amalg E'_2)
\end{equation*}
is given by first post-composing with the obvious $\theta$-framed map $E'_i \to E'_1 \amalg E'_2$ for
$i=1,2$, then using a homeomorphism, as follows:
\begin{align*}
  \mathrm{Hom}^{\theta}(E_1, E'_1) \times \mathrm{Hom}^{\theta}(E_2, E'_2)
  & \to \mathrm{Hom}^{\theta}(E_1, E'_1 \amalg E'_2) \times \mathrm{Hom}^{\theta}(E_2, E'_1 \amalg
    E'_2)\\
  & \cong \mathrm{Hom}^{\theta}(E_1 \amalg E_2, E'_1 \amalg E'_2)
\end{align*}
If the length of the Moore path were constant, the displayed homeomorphism would only be a homotopy
equivalence, as the length of a Moore path can be different on the two parts.
\end{rem}

\medskip

To set up factorization homology in \autoref{sec:eFH},
we fix an $n$-dimensional orthogonal $G$-representation $V$;
in addition, we suppose that $V$ is $\theta$-framed and fix a $\theta$-framing
on $V$
$$\phi: \mathrm{T}V \to \theta^{*}\universal.$$
From the proof of the next proposition, we may assume without loss of generality
that the base of $\phi: V \to B$ is the constant map to
$\phi(0) \in B^G$ (which is a $V$-indexed component in the sense of
\autoref{thm:LM}).

\begin{prop}
     \label{prop:22}
 Write $\rho: G \to O(n)$ for a matrix representation of $V$ and
   $\Lambda_{\rho} = \{(\rho(g),g) \in O(n) \times G| g \in G\}$.
  For a tangential structure $\theta: B \to B_GO(n)$, the space
  of $\theta$-framings on the $G$-manifold $V$ is equivalent to $
  (\theta^{*}E_GO(n))^{\Lambda_\rho} \cong \theta^{*}(E_GO(n))^{\Lambda_\rho}$.
  So a $\theta$-framing on $V$ exists, if and only if the intersection of $\theta(B)$ and the $V$-indexed
component of $(B_GO(n))^G$ as introduced in \autoref{thm:BGO(n)} is non-empty.
\end{prop}
\begin{proof}
   Since $\mathrm{T}V \cong V$ as $G$-vector bundles, the space of
   $\theta$-framings on $V$ is
\begin{equation*}
  \mathrm{Hom}(\mathrm{T}V, \theta^{*}\universal)^G
  \simeq \mathrm{Hom}(V,\theta^{*}\universal)^G
  = \mathrm{Hom}(\bR^n,\theta^{*}\universal)^{\Lambda_{\rho}} \cong (\theta^{*}E_GO(n))^{\Lambda_\rho},
\end{equation*}
We have
$$(\theta^{*}E_GO(n))^{\Lambda_\rho} \cong \theta^{*}(E_GO(n))^{\Lambda_\rho}$$
by applying \autoref{thm:LM} to the principal $G$-$O(n)$-bundles
\begin{equation*}
\theta^{*}E_GO(n) \to B \text{ and } E_GO(n) \to B_GO(n). \qedhere
\end{equation*}

\end{proof}
\autoref{prop:22} and \autoref{thm:BGO(n)} give:
\begin{cor}
   Let $V, W$ be $n$-dimensional $G$-representations. 
\begin{enumerate}
\item The $G$-manifold $W$ can be $\mathrm{fr}_V$-framed if and only if $W \cong V$ as $G$-representations.
\item For a tangential structure $\theta$ so that $V$ and $W$ are both
  $\theta$-framed and $H \subset G$, $(\mathrm{Emb}^{\theta}(V,W))^H \neq
  \varnothing$ if and only if $\mathrm{Res}_H^G W \cong \mathrm{Res}_H^G V$ as $H$-representations.
\end{enumerate}
\end{cor}

We also describe the change of tangential structures.
Let $q$ be a morphism from $\theta_1: B_1 \to B_GO(n)$ to $\theta_2: B_2 \to B_GO(n)$, equivalently, a
$G$-map $q: B_1 \to B_2$ satisfying $\theta_2q=\theta_1$. Then a $\theta_1$-framed vector bundle
$E \to B$ with $\phi_E: E \to \theta_1^{*}\universal $ is $\theta_2$-framed by
$$ E \to \theta_1^{*}\universal = q^{*}\theta_2^{*}\universal \to \theta_2^{*}\universal.$$
The morphism $q$ also induces a map on framed-morphisms. So we have a functor
\begin{equation*}
  q_{*}:\mathrm{Vec}^{\theta_1}_{G,n} \to \mathrm{Vec}^{\theta_2}_{G,n}, \text{ and similarly }
  q_{*}: \mathrm{Mfld}^{\theta_1}_{G,n} \to \mathrm{Mfld}^{\theta_2}_{G,n}.
\end{equation*}

\subsection{Equivariant factorization homology}
\label{sec:eFH}
In this subsection, we use the
$\Gtop$-enriched category $\mathrm{Mfld}^{\theta}_{G,n}$ developed in \autoref{sec:tangential}
to define the equivariant factorization homology as a monadic bar construction. We have fixed an $n$-dimensional orthogonal
$G$-representation $V$ and a $\theta$-framing $\phi: \mathrm{T}V \to \theta^{*}\universal$ on the
$G$-manifold $V$.

Recall that $\Lambda$ is the category of finite based sets $\mathbf{k}$ and based injections.
From \autoref{exmp:DMDV}, we have a $\Lambda$-sequence $\mathscr{D}_{\myM}^{\theta}$ for a $\theta$-framed manifold $M$.
Explicitly, on objects, we have
\begin{align}
  \label{defn:Dm}
    \mathscr{D}_{\myM}^{\theta}(k) & = \mathrm{Emb}^{\theta}(\amalg_k V, M);
  \end{align}
  On morphisms, $\Sigma_k $ acts by permuting the copies of $V$,
  and $s_i^k: \mathbf{k-1} \to  \mathbf{k}$ induces
  $(s_i^k)^{*}: \mathscr{D}_{\myM}^{\theta}(k) \to \mathscr{D}_{\myM}^{\theta}(k-1)$
  by forgeting the $i$-th $V$ in the embeddings for $1 \leq i \leq k$.

 Taking $M = V$,  $\mathscr{D}_V^{\theta}$ is a reduced $G$-operad.
Using \autoref{defn:LambdaObjectCircle}, we get associated functors of $ \mathscr{D}^{\theta}_{\myM}$ and $\mathscr{D}^{\theta}_{V}$, which we denote by
\begin{align*}
 \mathrm{D}^{\theta}_{\myM},\mathrm{D}^{\theta}_{V}&:   \Gtop_{*} \to \Gtop_{*}; \\
 \mathrm{D}^{\theta}_{\myM}(X) & =  \coprod_{k \geq 0} \mathscr{D}_{\myM}^{\theta}(k) \times_{\Sigma_k}  X^{\times k}/ \sim.
\end{align*}
The associated functor $
\mathrm{D}^{\theta}_{V}$ is a monad (with natural transformations
${\eta: \mathrm{id} \to \mathrm{D}^{\theta}_{V}}$ and
$m: \mathrm{D}^{\theta}_{V} \circ \mathrm{D}^{\theta}_{V} \to
\mathrm{D}^{\theta}_{V}$) and 
$\mathrm{D}^{\theta}_{\myM}$ is a right $
\mathrm{D}^{\theta}_{V}$-module
(with a natural transformation $m_{L}: \mathrm{D}^{\theta}_{\myM} \circ \mathrm{D}^{\theta}_{V} \to
\mathrm{D}^{\theta}_{\myM}$).
The following is a standard definition:
\begin{defn}
  Let $\mathscr{C}$ be a reduced operad in $(\Gtop,\times)$ and $\mathrm{C}$ be the associated
 monad. An object $A \in \Gtop_{*}$ 
   is a $\mathscr{C}$-algebra, or equivalently a $\mathrm{C}$-algebra, if there is a map $\gamma: \mathrm{C}A \to A$ such that the following diagrams
   commute, where the unlabeled maps are the unit and multiplication map of the monad $\mathrm{C}$:
   \begin{equation*}
     \begin{tikzcd}
     CCA \ar[r,"C\gamma"] \ar[d] & CA \ar[d,"\gamma"] \\
     CA \ar[r,"\gamma"'] & A
   \end{tikzcd}; \quad
   \begin{tikzcd}
     A \ar[r] \ar[rd,equal] & CA \ar[d,"\gamma"]\\
     & A
   \end{tikzcd}.
   \end{equation*}
\end{defn}

In what follows, let $A$ be a $\mathscr{D}_V^{\theta}$-algebra in
$\Gtop_{*}$. It is conceptually a left $\mathrm{D}^{\theta}_{\myM} $-module.
We have a simplicial $G$-space, whose $q$-th level is
\begin{equation*}
  \mathrm{B}_q(\mathrm{D}_{\myM}^{\theta},\mathrm{D}_V^{\theta},A) :=
  \mathrm{D}_{\myM}^{\theta}(\mathrm{D}_V^{\theta})^qA.
\end{equation*}
The face maps are induced by the above-given structure maps
\begin{equation*}
  m_L: \mathrm{D}^{\theta}_M \mathrm{D}^{\theta}_V \to \mathrm{D}^{\theta}_M,
  \ \ m: \mathrm{D}^{\theta}_V \mathrm{D}^{\theta}_V \to \mathrm{D}^{\theta}_V \text{ and }
\gamma: \mathrm{D}^{\theta}_VA \to A.  
\end{equation*}
The degeneracy maps are induced by $\eta: \mathrm{id} \to
\mathrm{D}^{\theta}_V$.

We have the following definition after the non-equivariant
work of \cite[IX.1.5]{Andrade}:
\begin{defn}
\label{defn:factoriaztion}
The factorization homology of $M$ with coefficients $A$ is
\begin{equation*}
\int_M^{\theta}A : = \mathrm{B}(\mathrm{D}_{\myM}^{\theta},\mathrm{D}_V^{\theta} ,A).
\end{equation*}
\end{defn}

\medskip
The category of algebras $\mathscr{D}_V^{\theta}[\Gtop_{*}]$ has a transfer model structure via
the forgetful functor $\mathscr{D}_V^{\theta}[\Gtop_{*}] \to \Gtop_{*}$ (\cite[3.2, 4.1]{BergMoer}),
so that weak equivalences of maps between algebras are just underlying weak equivalences.
\begin{prop}
  \label{prop:homotopical}
  The functor $\displaystyle\int_M^{\theta} -: \mathscr{D}_V^{\theta}[\Gtop_{*}] \to \Gtop_{*}$ is homotopical.
\end{prop}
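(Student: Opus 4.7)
The plan is to unwind the bar construction and attack homotopicality levelwise, then pass to the geometric realization. Fix a weak $G$-equivalence $A \to A'$ of $\mathscr{D}_V^\theta$-algebras; by the transfer model structure this is just an underlying weak $G$-equivalence of pointed $G$-spaces. I need to show the induced map $\int_M A \to \int_M A'$ is a weak $G$-equivalence.

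\textbf{Step 1: The functors $\mathrm{D}_V^\theta$ and $\mathrm{D}_M^\theta$ preserve weak $G$-equivalences.} Recall
\begin{equation*}
\mathrm{D}_M^\theta(X) = \coprod_{k\ge 0}\mathscr{D}_M^\theta(k) \times_{\Sigma_k} X^{\times k}/\sim.
\end{equation*}
The essential point is that the $\Sigma_k$-action on $\mathscr{D}_M^\theta(k)=\mathrm{Emb}^\theta(\sqcup_k V, M)$ is free: by \autoref{thm:intro1}, evaluating at $0$ gives a $(G\times\Sigma_k)$-equivalence to $\conf{M}$, on which $\Sigma_k$ acts freely. Consequently $\mathscr{D}_M^\theta(k)\to \mathscr{D}_M^\theta(k)/\Sigma_k$ is a principal $\Sigma_k$-bundle, and the functor $\mathscr{D}_M^\theta(k)\times_{\Sigma_k}(-)^{\times k}$ sends weak $G$-equivalences of well-pointed $G$-spaces to weak $G$-equivalences (since products and free quotients preserve weak equivalences in this setting, and the diagonal $\Sigma_k$-action on the product is free via the first coordinate).

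\textbf{Step 2: Levelwise homotopicality.} The $q$-simplices of the bar construction are $\mathrm{D}_M^\theta(\mathrm{D}_V^\theta)^q A$. By induction on $q$ using Step 1, each $(\mathrm{D}_V^\theta)^q A$ is weak $G$-equivalent to $(\mathrm{D}_V^\theta)^q A'$, and applying $\mathrm{D}_M^\theta$ once more gives a weak $G$-equivalence at each simplicial level. A mild point here is the basepoint identification $\sim$: it can be treated in the usual way, either by quotienting in stages as in \autoref{rmk:equi-relation} and observing that the identification is along a closed $G$-cofibration (by well-pointedness of $A$), or by passing through the filtration by word length in the monad. Either way, taking the quotient does not destroy the weak equivalence.

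\textbf{Step 3: Geometric realization.} To conclude, I invoke the standard criterion that the geometric realization of a levelwise weak equivalence of Reedy cofibrant (equivalently, proper) simplicial $G$-spaces is a weak $G$-equivalence. The bar construction of a monad is always proper provided the degeneracies $\mathrm{id}\to \mathrm{D}_V^\theta$, inserted via $\mathscr{I}_1\to\mathscr{D}_V^\theta$, are closed $G$-cofibrations at each level. This is again guaranteed by the structure of $\mathrm{D}_V^\theta$ as a coproduct indexed by $k$ whose $k=0$ summand is the basepoint and whose $k=1$ summand contains the unit, together with well-pointedness of $A$ (which is preserved by $\mathrm{D}_V^\theta$ since the $0$-space of $\mathscr{D}_V^\theta$ is a point).

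\textbf{Main obstacle.} The heart of the argument is Step 1, and in particular making the free $\Sigma_k$-action visible: without the identification $\mathrm{Emb}^\theta(\sqcup_k V, M)\simeq \conf{M}$ of \autoref{thm:intro1}, the $\Sigma_k$-quotients would not obviously be homotopical. Secondarily, one must verify the properness of the simplicial bar construction carefully in the equivariant, pointed, monadic setting; this is a technical cofibrancy check, but it is the standard sort of argument found in \cite{May84} and its equivariant descendants, adapted to the reduced monad $\mathrm{D}_V^\theta$ arising from the unital $\Lambda$-sequence formalism of \autoref{sec:Lambda}.
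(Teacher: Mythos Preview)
Your proof is correct and follows essentially the same strategy as the paper: establish Reedy cofibrancy of the bar construction (the paper does this via the NDR-pair machinery in \autoref{lem:NDR} and \autoref{lem:GILSA10}) and then invoke \autoref{thm:Reedy}; you are simply more explicit than the paper about the levelwise weak equivalence, which the paper leaves implicit. One small note: your appeal to \autoref{thm:intro1} for the free $\Sigma_k$-action is both overkill and technically restricted to the $V$-framed case, whereas the proposition is stated for general $\theta$---the freeness is immediate from the fact that embeddings $\sqcup_k V \hookrightarrow M$ have disjoint images, so $\Sigma_k$ already acts freely on $\mathrm{Emb}(\sqcup_k V, M)$ and hence on its pullback $\mathrm{Emb}^\theta(\sqcup_k V, M)$.
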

\begin{proof}
  The proof is a formal argument assembling the literature.
  We show that the bar construction is Reedy cofibrant in the deferred \autoref{lem:Reedy}.
  The claim then follows since geometric realization preserves levelwise weak equivalences between
  Reedy cofibrant simplicial $G$-spaces, as quoted in the deferred \autoref{thm:Reedy}.
\end{proof}

We have the following properties of the factorization homology.
\begin{prop}
\label{prop:FHonV}
\begin{align*}
  \int^{\theta}_VA & \simeq A. \\
\int^{\theta}_M \mathrm{D}^{\theta}_VA & \simeq \mathrm{D}^{\theta}_MA.
\end{align*}
\end{prop}
\begin{proof}
Both follow from the extra degeneracy argument of \cite[Propositions
9.8 and 9.9]{MayGILS}. For the first equivalence, the extra
degeneracy 
coming from the unit map of the first $\mathrm{D}_V^{\theta}$ establishes $A$ as a retract of 
$\mathbf{B}(\mathrm{D}_V^{\theta},\mathrm{D}_V^{\theta},A)$, which is just
$\displaystyle\int_VA.$
For the second equivalence, the unit map $A \to \mathrm{D}_V^{\theta}A$
establishes $\mathrm{D}_M^{\theta}A$ as a retract of $\mathbf{B}(\mathrm{D}_M^{\theta},\mathrm{D}_V^{\theta},\mathrm{D}_V^{\theta}A)$.
\end{proof}

\begin{prop}
  \label{prop:FHonUnion}
   For $\theta$-framed manifolds $M$ and $N$,
\begin{equation*}
\int^{\theta}_{M \amalg N}A \cong \int^{\theta}_MA \times \int^{\theta}_NA.
\end{equation*}
\end{prop}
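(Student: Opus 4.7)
The plan is to reduce the statement to a simplicial-level identification by decomposing embeddings according to which component they land in, and then to invoke that geometric realization commutes with finite products of Reedy cofibrant simplicial $G$-spaces.

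First, I would observe that every $\theta$-framed embedding $\coprod_k V \hookrightarrow M \sqcup N$ canonically partitions the $k$ disks into those landing in $M$ and those landing in $N$. This yields a $(G \times \Sigma_k)$-equivariant decomposition
\begin{equation*}
\mathrm{Emb}^{\theta}(\coprod_k V, M \sqcup N) \cong \coprod_{i+j=k} \Sigma_k \times_{\Sigma_i \times \Sigma_j} \mathrm{Emb}^{\theta}(\coprod_i V, M) \times \mathrm{Emb}^{\theta}(\coprod_j V, N),
\end{equation*}
which is precisely the formula for the level $k$ of the Day convolution. Hence $\mathscr{D}^{\theta}_{M \sqcup N} \cong \mathscr{D}^{\theta}_M \Day \mathscr{D}^{\theta}_N$ in $\Lambda^{op}_*(\Gtop)$. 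Because the right $\mathscr{D}^{\theta}_V$-action acts on each embedded disk independently, this Day decomposition is compatible with the diagonal right $\mathscr{D}^{\theta}_V$-module structure, which is also reflected in \autoref{prop:circledistribute}: $(\mathscr{D}^{\theta}_M \Day \mathscr{D}^{\theta}_N) \odot \mathscr{D}^{\theta}_V \cong (\mathscr{D}^{\theta}_M \odot \mathscr{D}^{\theta}_V) \Day (\mathscr{D}^{\theta}_N \odot \mathscr{D}^{\theta}_V)$.

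Next, I would pass to associated functors and verify that for any $X \in \Gtop_*$ the Day convolution on $\Lambda$-sequences corresponds to the Cartesian product on $\Gtop_*$:
\begin{equation*}
\mathrm{D}^{\theta}_{M \sqcup N}(X) \cong \mathrm{D}^{\theta}_M(X) \times \mathrm{D}^{\theta}_N(X).
\end{equation*}
This follows by unwinding the coend definition of $\mathrm{E}(X) = \mathscr{E} \otimes_\Lambda X^*$: the partition of the $k$ points together with the base-point identifications (\autoref{rmk:equi-relation}) reorganizes into an ordinary product of the two associated functors, using that $\Gtop$ is Cartesian monoidal and $\wedge$ in $\Lambda$ is the Day monoidal structure.

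Applying this iteratively, at each simplicial degree $q$ we have
\begin{equation*}
\mathbf{B}_q(\mathrm{D}^{\theta}_{M \sqcup N}, \mathrm{D}^{\theta}_V, A) \cong \mathbf{B}_q(\mathrm{D}^{\theta}_M, \mathrm{D}^{\theta}_V, A) \times \mathbf{B}_q(\mathrm{D}^{\theta}_N, \mathrm{D}^{\theta}_V, A).
\end{equation*}
These isomorphisms are simplicial: the face maps involving the module action $\mathrm{D}^{\theta}_{M \sqcup N} \mathrm{D}^{\theta}_V \to \mathrm{D}^{\theta}_{M \sqcup N}$ split along the two factors because an embedded disk remains in its chosen component after composing with a little-disks configuration, and the inner faces and the degeneracies only touch $\mathrm{D}^{\theta}_V$ and $A$, which appear identically on both factors.

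Finally, take geometric realization. Both simplicial $G$-spaces are Reedy cofibrant by \autoref{lem:Reedy}, and geometric realization of Reedy cofibrant simplicial $G$-spaces commutes with finite products (\autoref{thm:Reedy}), so we obtain the asserted weak $G$-equivalence $\int_{M \sqcup N} A \simeq \int_M A \times \int_N A$. The main obstacle will be the careful bookkeeping of base-point identifications when comparing the coend $\mathscr{E} \otimes_\Lambda X^*$ for $\mathscr{E} = \mathscr{D}^{\theta}_M \Day \mathscr{D}^{\theta}_N$ to the Cartesian product of the individual coends; everything else is either a direct geometric observation about embeddings or an appeal to machinery already set up in \autoref{sec:Lambda}.
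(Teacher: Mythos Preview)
Your proposal is correct and follows essentially the same route as the paper: identify $\mathscr{D}^{\theta}_{M\sqcup N} \cong \mathscr{D}^{\theta}_M \Day \mathscr{D}^{\theta}_N$ via the partition of embedded disks, use \autoref{prop:circledistribute} to distribute this through the bar construction levelwise, and then commute geometric realization with finite products. One minor correction: \autoref{thm:Reedy} is not the statement that $|{-}|$ commutes with finite products---that is a general fact about geometric realization in compactly generated spaces requiring no cofibrancy hypothesis, and since your levelwise identification is an isomorphism rather than merely a weak equivalence, Reedy cofibrancy plays no role in this step.
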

\begin{proof}
  Without loss of generality, we may assume that both $M$ and $N$ are connected. Then
\begin{align*}
  \mathscr{D}_{M \amalg N}^{\theta}(k) & \cong \mathrm{Emb}^{\theta}(\amalg_k V, M \amalg N) \\
  &\cong \coprod_{i=0}^k\big(\mathrm{Emb}^{\theta}(\amalg_i V, M ) \times \mathrm{Emb}^{\theta}(\amalg_{k-i}
    V, N)\big) \times_{\Sigma_i  \times \Sigma_{k-i}} \Sigma_k \\
  & \cong \coprod_{i=0}^k \big(\mathscr{D}_M^{\theta}(i) \times \mathscr{D}_{N}^{\theta}(k-i)\big) \times_{\Sigma_i
    \times \Sigma_{k-i}} \Sigma_k
\end{align*}

This is the formula of the Day convolution of $\mathscr{D}_{M}^{\theta}$ and
$\mathscr{D}_{N}^{\theta}$. So we have
\begin{equation}
\label{eq:5}
\mathscr{D}_{M \amalg N}^{\theta} \cong \mathscr{D}_{M}^{\theta} \boxtimes
  \mathscr{D}_{N}^{\theta}. 
\end{equation}
  
  We drop the $\theta$ in the rest of the proof. 
By \autoref{eq:5} and iterated use of \autoref{prop:circledistribute}, there is an
isomorphism in $\Lambda^{op}_*(\Gtop)$ for each $q$:
\begin{equation}
  \label{eq:1}
\mathbf{B}_q(\mathscr{D}_{M \amalg N},\mathscr{D}_{V},\imath_0(A)) \cong \mathbf{B}_q(\mathscr{D}_M,
\mathscr{D}_V, \imath_0(A)) \boxtimes \mathbf{B}_q(\mathscr{D}_{N}, \mathscr{D}_V, \imath_0(A)).
\end{equation}

Iterated use of \autoref{eq:monad-and-circle} identifies
\begin{equation*}
\imath_0(\mathbf{B}_{q}(\mathrm{D}_{\myM},\mathrm{D}_{V},A)) \cong \mathbf{B}_q(\mathscr{D}_{\myM}, \mathscr{D}_V, \imath_0(A)),
\end{equation*}
so evaluating on the 0-th level of \autoref{eq:1} gives an equivalence of simplical $G$-spaces:
\begin{equation*}
\mathbf{B}_{*}(\mathrm{D}_{M \amalg N},\mathrm{D}_{V}, A) \cong \mathbf{B}_*(\mathrm{D}_{M}, \mathrm{D}_V, A) \times \mathbf{B}_*(\mathrm{D}_{N}, \mathrm{D}_V, A).
\end{equation*}

 The claim follows from passing to geometric realization and commuting the geometric realization with the product.
\end{proof}

\begin{thm}
  \label{prop:change-of-tangential}
  Let $q: \theta_1 \to \theta_2$ be a morphism of tangential structures
  and $V = (V, \phi_1)$ be $\theta_1$-framed.
  We also write $V$ for the $\theta_2$-framed $G$-manifold $q_{*}V = (V, q\phi_1)$.
  For a $\theta_1$-framed $G$-manifold $M$ and a $\mathrm{D}_V^{\theta_2}$-algebra
  $A$, there is a $G$-equivalence
\begin{equation*}
\int^{\theta_1}_M q^{*}A \simeq \int^{\theta_2}_{q_{*}M} A.
\end{equation*}
\end{thm}
The proof is deferred to the end of \autoref{sec:operad}.

\begin{notn}
  From now on, we consider $\theta$ implicit and write
  $\displaystyle\int_M^{\theta} A $ as $\displaystyle\int_M A$.
\end{notn}

\subsection{Relation to configuration spaces}
\label{sec:embeddingspace}
Now we restrict our attention to the $V$-framed case for an orthogonal $n$-dimensional
$G$-representation $V$.
We give $V$ the canonical $V$-framing $\mathrm{T}V \cong V \times V$
and let $M$ be a $G$-manifold of dimension $n$.
When $M$ is $V$-framed, we denote the $V$-framing by $\phi_M: \mathrm{T}M  \to V$.

In this subsection,
we first prove that a smooth embedding of $\amalg_kV$ into $M$ is determined by its images and
derivatives at the origin up to a contractible choice of homotopy (\autoref{lem:derivative}).
Then we proceed to prove that a $V$-framed
embedding space of $\amalg_kV$ into $M$ as defined in \autoref{eq:emb-space1}
is homotopically the same as choosing the center points (\autoref{cor:conf}).

To formulate the result, we
first define the suitable equivariant configuration space related to a
manifold, which will be ``the space of points and derivatives''.

We use $\conf{E}$ to denote the ordered configuration space of $k$ distinct points in $E$,
topologized as a subspace of $E^k$.
  When $E$ is a $G$-space, $\conf{E}$ has a $G$-action by pointwise acting. It commutes with the
  $\Sigma_k$-action that permutes the points.
\begin{defn}
  For a fiber bundle $p: E \to M$, define $\overconf{E}$ to be configurations of $k$-ordered distinct
  points in $E$ with distinct images in $M$.
  $\overconf{E}$ is a subspace of $\conf{E}$ and inherits a free $\Sigma_k$-action.
  When $p$ is a $G$-fiber bundle, $\overconf{E}$ is a $G$-space.
\end{defn}

\begin{exmp}
  When $k=1$, $\mathscr{F}_{E \downarrow M}(1) \cong \mathscr{F}_E(1)$.
\end{exmp}
\begin{exmp}
  \label{exmp:frame-manifold-PConf}
  When $E = M \times F$ is a trivial bundle over $M$ with fiber $F$,
\begin{equation*}
\overconf{E} \cong \conf{M} \times F^k .
\end{equation*}
\end{exmp}
In general, we have the following pullback diagram:
\begin{equation*}
  \begin{tikzcd}
    \overconf{E} \ar[d] \ar[r,hook] & E^k \ar[d,"p^k"] \\
    \conf{M} \ar[r,hook] & M^k.
  \end{tikzcd}
\end{equation*}

Now, we take $E = \mathrm{Fr}_V(\mathrm{T}M)$.
Recall that $\mathrm{Fr}_V(\mathrm{T}M) = \mathrm{Hom}(V, \mathrm{T}M)$ is a $G$-bundle over $M$.
For an embedding $\amalg_kV \to M$, we take its
derivative and evaluate at $0 \in V$. We will get $k$-points in $\mathrm{Fr}_V(\mathrm{T}M)$ with
different images projecting to $M$. In other words, the composition
\begin{equation*}
\mathrm{Emb}(\amalg_k V, M) \overset{d}{ \to }
 \mathrm{Hom}(\amalg_k \mathrm{T}V, \mathrm{T} M) \overset{ev_0}{ \to }
 \mathrm{Hom}(\amalg_k V, \mathrm{T}M) = \mathrm{Fr}_V(\mathrm{T}M)^{k} 
\end{equation*}
factors as
\begin{equation}
  \label{equ:derivative}
\mathrm{Emb}(\amalg_k V, M) \overset{d_0}{ \to }
 \overconf{\mathrm{Fr}_V(\mathrm{T}M)} \hookrightarrow \mathrm{Fr}_V(\mathrm{T}M)^{k} .
\end{equation}

\begin{prop}
\label{lem:derivative}
  The map $d_0$ in \autoref{equ:derivative} is a $G$-Hurewicz fibration and $(G \times \Sigma_k)$-homotopy equivalence.
\end{prop}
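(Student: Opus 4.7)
This is the equivariant refinement of a classical non-equivariant result (Andrade \textsection V.4.5): the fiber of $d_0$ over a configuration with prescribed $1$-jets is contractible, via a straightening homotopy. I will carry out the argument $(G\times\Sigma_k)$-equivariantly using a $G$-invariant Riemannian metric on $M$ (available since $G$ is finite). The plan has three parts: (i) build an explicit equivariant section $\eta$ of $d_0$ from the exponential map, (ii) exhibit an equivariant homotopy from $\eta \circ d_0$ to $\mathrm{id}$, and (iii) upgrade the construction parametrically to get a Hurewicz lifting function.

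\textbf{Step 1 (section).} Fix a $G$-invariant complete Riemannian metric on $M$, and a $(G\times\Sigma_k)$-equivariant continuous function $r : \overconf{\mathrm{Fr}_V(\mathrm{T}M)} \to (0,\infty)$ bounded above by the injectivity radius at each $p_i = \pi(\psi_i)$ and by one-third of the pairwise geodesic distances between the $p_i$. Set
\[
\eta(\psi_1,\ldots,\psi_k)(v) \;=\; \exp_{p_i}\!\bigl(\psi_i\bigl(r\, v/(r + \|v\|)\bigr)\bigr) \quad \text{for } v \in V_i.
\]
Since $v \mapsto rv/(r+\|v\|)$ is a $G$-equivariant diffeomorphism $V \to B_r(0)$ with identity derivative at the origin, $\eta$ is a $(G\times\Sigma_k)$-equivariant section of $d_0$.

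\textbf{Steps 2 and 3 (homotopy inverse and fibration).} To show $\eta \circ d_0 \simeq \mathrm{id}$, I use a straightening homotopy. Given $f \in \mathrm{Emb}(\coprod_k V, M)$, first precompose on each $V_i$ with a $G$-equivariant self-map $\lambda_{\epsilon(f)} : V \to B_{\epsilon(f)}$ (with $\epsilon(f) > 0$ chosen continuously and $(G\times\Sigma_k)$-equivariantly in $f$) so that the image of each $V_i$ lies in the exponential neighborhood of $f(0\in V_i)$. In those coordinates the composite becomes $\exp_{p_i} \circ g_i$ with $g_i(0) = 0$ and $dg_i|_0 = \psi_i$, and the linear rescaling $g_i^s(v) := g_i(sv)/s$ (with $g_i^0 := \psi_i$) interpolates from $g_i$ to $\psi_i$. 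Re-composing with $\exp_{p_i}$, undoing the $\lambda$-contraction, and blending the final radius to the standard one of Step 1 produces the desired homotopy through embeddings. For the Hurewicz fibration property, the same construction carried out parametrically along a path $\gamma: I \to \overconf{\mathrm{Fr}_V(\mathrm{T}M)}$---moving base points by geodesics and frames by parallel transport along $\gamma$---produces an equivariant path-lifting function.

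\textbf{Main obstacle.} The delicate point is Step 2: one must verify that every intermediate map is a genuine smooth \emph{embedding} of all of $V$ (not merely a local immersion), that $\epsilon(f)$ is continuous and $(G\times\Sigma_k)$-equivariant in $f$, and that contracting, rescaling, uncontracting, and blending glue into a single continuous homotopy. Once the non-equivariant construction is arranged this way, $(G\times\Sigma_k)$-equivariance is inherited from the $G$-invariance of the metric, the equivariance of the exponential map, and the symmetric treatment of the $k$ copies throughout.
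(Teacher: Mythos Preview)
Your approach is essentially the same as the paper's for the homotopy-equivalence half: both build the section from the exponential map after choosing a $G$-invariant metric and an equivariant radius function, and both use the rescaling $g(sv)/s$ (what the paper calls ``blowing-up-at-origin techniques'') for the deformation retract. The paper also reduces first to $k=1$ via the evident pullback square, which tidies the bookkeeping, but your simultaneous treatment of all $k$ is fine.

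The one substantive divergence is the fibration argument. The paper does \emph{not} build a path-lifting function; instead it shows $d_0$ is equivariantly locally trivial by producing, for each $H$-fixed point $x$ of the base, a map $\phi:\bar U\to\mathrm{Diff}(M)$ into compactly supported diffeomorphisms of the ambient manifold with $d(\phi(y))\circ x=y$, so that $f\mapsto\phi(y)\circ f$ carries the fiber over $x$ homeomorphically to the fiber over $y$. Your Step~3 sketch (``move base points by geodesics and frames by parallel transport'') is not yet a lift: parallel transport of the frame along the base-point path produces a \emph{path in the base}, not an embedding covering it, and in general the parallel-transported frame will not equal the prescribed $\psi_i(t)$, so a further linear correction is needed before you even know what the target $1$-jet is. To turn this into an honest Hurewicz connection you would still need to say how the initial embedding $f_0$ is carried along---and the cleanest way to do that is precisely to conjugate by ambient diffeomorphisms realizing the prescribed change of $1$-jet, which is the paper's construction. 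So your Step~3 either has a gap or collapses to the paper's local-trivialization argument once made precise.
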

  One can find an equivariant local trivialization. The proof is tedious and can
  be found in \cite[Prop 5.5.5]{mythesis}.

  A section and homotopy inverse exists uniquely up to homotopy:
\begin{equation}
\label{equ:sectionexp}
\sigma:  \overconf{\mathrm{Fr}_V(\mathrm{T}M)} \to \mathrm{Emb}(\amalg_k V, M).
\end{equation}
For $k=1$,  it is given by the exponential map:
\begin{equation*}
\sigma: \mathrm{Fr}_V(\mathrm{T}M) \to \mathrm{Emb}(V,M).
\end{equation*}
Since there is a (contractible) choice of the radius at each point for the exponential map to be
homeomorphism, $\sigma$ is unique only up to homotopy.

\begin{lem}  \label{item:corconf0}
For a $V$-framed manifold $M$, the projection 
\begin{equation*}
  \overconf{\mathrm{Fr}_V(\mathrm{T}M)} \to  \conf{M}
\end{equation*}
  is a trivial bundle with fiber $(\mathrm{Hom}(V,V))^k$.
  We call the section that selects $(\mathrm{id}_{V})^k$ in each fiber the zero section $z$.  
\end{lem}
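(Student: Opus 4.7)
The plan is to use the $V$-framing to reduce directly to \autoref{exmp:frame-manifold-PConf}. Since $M$ is $V$-framed, we have a $G$-vector bundle isomorphism $\phi_M: \mathrm{T}M \xrightarrow{\cong} M \times V$. Applying the functor $\mathrm{Hom}(V, -)$ fiberwise gives a $G$-homeomorphism of fiber bundles over $M$:
\begin{equation*}
\mathrm{Fr}_V(\mathrm{T}M) = \mathrm{Hom}(V, \mathrm{T}M) \xrightarrow{\cong} M \times \mathrm{Hom}(V,V),
\end{equation*}
where the $G$-action on the right factor of the target is by conjugation.

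Given this trivialization, the bundle $\mathrm{Fr}_V(\mathrm{T}M) \to M$ is $G$-trivial with fiber $\mathrm{Hom}(V,V)$. I would then invoke \autoref{exmp:frame-manifold-PConf} directly with $E = M \times \mathrm{Hom}(V,V)$ and $F = \mathrm{Hom}(V,V)$ to conclude
\begin{equation*}
\overconf{\mathrm{Fr}_V(\mathrm{T}M)} \cong \conf{M} \times \mathrm{Hom}(V,V)^k,
\end{equation*}
under which the canonical projection to $\conf{M}$ is simply projection onto the first factor. This identification is manifestly $(G \times \Sigma_k)$-equivariant: $G$ acts diagonally on both factors, while $\Sigma_k$ permutes the $k$ points of the configuration and simultaneously permutes the $k$ copies of $\mathrm{Hom}(V,V)$. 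Hence the projection is a trivial $(G \times \Sigma_k)$-bundle with fiber $\mathrm{Hom}(V,V)^k$.

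Finally, the zero section $z: \conf{M} \to \overconf{\mathrm{Fr}_V(\mathrm{T}M)}$ is defined under this trivialization by $(m_1, \dots, m_k) \mapsto \bigl((m_1, \mathrm{id}_V), \dots, (m_k, \mathrm{id}_V)\bigr)$; since $\mathrm{id}_V$ is $G$-fixed and $\Sigma_k$ permutes identical copies, the section is equivariant as claimed. The only mild subtlety is bookkeeping the group actions through the trivialization $\phi_M$, but no nontrivial geometry or homotopy theory is needed beyond the already-established example.
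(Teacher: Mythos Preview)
Your proof is correct and follows essentially the same approach as the paper: trivialize $\mathrm{Fr}_V(\mathrm{T}M)$ as $M \times \mathrm{Hom}(V,V)$ using the $V$-framing, then invoke \autoref{exmp:frame-manifold-PConf}. The paper's version is terser (it writes $\mathrm{Fr}_V(V)$ for $\mathrm{Hom}(V,V)$ and omits the explicit discussion of the zero section and equivariance), but the argument is the same.
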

\begin{proof}
 Regarding $V$ as a bundle over a point, we may identify $\mathrm{Fr}_V(V) = \mathrm{Hom}(V,V)$.
 Since $M$ is $V$-framed, $\mathrm{Fr}_V(\mathrm{T}M) \cong \mathrm{Fr}_V(M \times V) \cong M \times
 \mathrm{Fr}_V(V)$ as equivariant bundles. The claim follows from \autoref{exmp:frame-manifold-PConf}.  
\end{proof}

We can restrict the exponential map \autoref{equ:sectionexp} to the zero section in
\autoref{item:corconf0} to get
\begin{equation}
\label{equ:sectionexp-at0}
\sigma_0:  \conf{M} \to \mathrm{Emb}(\amalg_k V, M).  
\end{equation}
Now we are ready to justify the equivalence of
$\mathrm{Emb}^{\mathrm{fr}_V}(\amalg_{k}V, M)$
and the configuration spaces of $M$.
Moreover, we show that this equivalence is compatible over
$\mathrm{Emb}(\amalg_k V, M)$. This will be used in later sections
to compare different scanning maps.

\begin{prop}
\label{cor:conf}
For a $V$-framed manifold $M$, we have:
\begin{enumerate}
\item \label{item:corconf1}
  Evaluating at $0$ of the embedding gives a $(G \times \Sigma_k)$-homotopy equivalence:
\begin{equation*}
 ev_0:\mathscr{D}^{\mathrm{fr}_{V}}_{\myM}(k) \equiv  \mathrm{Emb}^{\mathrm{fr}_V}(\amalg_k
  V, M) \to \conf{M}.
\end{equation*}
\item \label{item:corconf2}
The forgetful map $\mathrm{Emb}^{\mathrm{fr}_V}(\amalg_k
  V, M) \to \mathrm{Emb}(\amalg_k V, M) $ is homotopic to \autoref{equ:sectionexp-at0} in the
  sense that the following diagram is $(G
\times \Sigma_k)$-homotopy commutative: 
\begin{equation*}
  \begin{tikzcd}
   \mathrm{Emb}^{\mathrm{fr}_V}(\amalg_k
  V, M) \ar[r] \ar[d,"ev_0"'] &  \mathrm{Emb}(\amalg_k V, M) \ar[d,"ev_0"] \\
  \conf{M} \ar[ur,"\sigma_0"'] \ar[r,equal]&  \conf{M}
  \end{tikzcd}
\end{equation*}
\end{enumerate}
\end{prop}

\begin{proof} 
\autoref{item:corconf1}
  By \autoref{def:embedding} and \autoref{defn:Dm}, $\mathrm{Emb}^{\mathrm{fr}_V}(\amalg_k V, M)$ is the homotopy
  fiber of the composite:
\begin{equation*}
D: \mathrm{Emb}(\amalg_k V, M) \overset{d}{ \to }  \mathrm{Hom}(\amalg_k
\mathrm{T}V, \mathrm{T} M)  \overset{(\phi_M)_{*}}{ \to } \mathrm{Hom}(\amalg_k
\mathrm{T}V, V).
\end{equation*}

We would like to restrict the composite at
$\{0\} \amalg \cdots \amalg \{0\} \subset V\amalg \cdots \amalg V$. Since 
\begin{equation*}
\mathrm{Hom}(\amalg_k \mathrm{T}V, \mathrm{T} M)  \cong \prod_k  \mathrm{Hom}(\mathrm{T}V,
\mathrm{T} M) 
\end{equation*}
and $i_0: V \to \mathrm{T}V$ is a $G$-homotopy equivalence of $G$-vector bundles,
\begin{equation*}
ev_0: \mathrm{Hom}(\amalg_k \mathrm{T}V, \mathrm{T} M) \overset{(i_0)^{*}}{ \to }  \prod_k
\mathrm{Hom}(V,\mathrm{T} M) \cong (\mathrm{Fr}_V(\mathrm{T}M))^k
\end{equation*}
is a $(G \times \Sigma_k)$-homotopy equivalence.
So in the following commutative diagram,
the vertical maps are all $(G \times \Sigma_k)$-homotopy equivalences:
\begin{equation}
  \label{eq:big}
    \begin{tikzcd}      
 \mathrm{Emb}(\amalg_k V, M)\ar[r,"d"]  \ar[d,"d_0"',"{\simeq \text{ by \autoref{lem:derivative}}}"] &
 \mathrm{Hom}(\amalg_k \mathrm{T}V, \mathrm{T} M) \ar[d,"ev_0"',"\simeq"]  \ar[r,"(\phi_M)_{*}"]
 & \mathrm{Hom}(\amalg_k \mathrm{T}V, V) \ar[d,"ev_0"',"\simeq"]  \\
 \overconf{\mathrm{Fr}_V(\mathrm{T}M)} \ar[r,hook] \ar[d,"{\cong \text{ by \autoref{item:corconf0}}}"]
 & \mathrm{Fr}_V(\mathrm{T}M)^{k}  \ar[r,"(\phi_M)_{*}"] & \mathrm{Fr}_V(V)^{k} \ar[d,equal]\\
   \conf{M} \times \mathrm{Fr}_V(V)^{k} \ar[rr,"proj_2"] &  & \mathrm{Fr}_V(V)^{k}.
    \end{tikzcd}
  \end{equation}

We focus on the top composition $D$ and the bottom map $proj_2$.
The map $ev_0$ between their codomains is a based map.
Indeed, the base point of ${\mathrm{Hom}(\amalg_k \mathrm{T}V, V)}$ is
from the $V$-framing of $\amalg_kV$ and is $(G \times \Sigma_k)$-fixed.
It is mapped to $\mathrm{id}^k$, the base point of $\mathrm{Fr}_V(V)^{k}$.
Consequently, there is a $(G \times \Sigma_k)$-homotopy equivalence between the homotopy fibers of
these two maps.
\begin{equation}
  \label{eq:14}
  \mathrm{Emb}^{\mathrm{fr}_V}(\amalg_k V, M) = \mathrm{hofib}(D) \overset{\simeq}{ \to }
   \mathrm{hofib}(proj_2).
\end{equation}

Our desired $ev_0$ in question is the composite of \autoref{eq:14} and the following map:
\begin{equation*}
  X: \mathrm{hofib}(proj_2) \to \conf{M} \times \mathrm{Fr}_V(V)^{k} \overset{proj_1}{\to} \conf{M}.
\end{equation*}
It suffices to show that $X$ is a $(G \times \Sigma_k)$-equivalence.
Indeed, $X$ is the comparison of the homotopy fiber and the actual fiber of $proj_2$.
Write temporarily  $F = \conf{M}$ and $B =  \mathrm{Fr}_V(V)^{k}$ with the $(G
\times \Sigma_k)$-fixed base point $b$.
Then the map $X$ is projection to $F$:
\begin{equation*}
\mathrm{hofib}(proj_2) \cong P_bB \times F \to F.
\end{equation*}
  The claim follows from the fact that $P_bB$ is $(G \times \Sigma_k)$-contractible.

\autoref{item:corconf2}
We examine the following diagram, where $z$ is the zero section in \autoref{item:corconf0}:
  \begin{equation*}
  \begin{tikzcd}
   \mathrm{Emb}^{\mathrm{fr}_V}(\amalg_k
  V, M) \ar[r] \ar[d,"ev_0"'] &  \mathrm{Emb}(\amalg_k V, M) \ar[d,"d_0"']\\
  \conf{M} \ar[r, "z"] \ar[ur,"\sigma_0", dotted]& 
  \overconf{\mathrm{Fr}_V(\mathrm{T}M)} \ar[u,shift right,"\sigma"',dotted].
  \end{tikzcd}
\end{equation*}
The left column is given by the (homotopy) fibers of the
first and second rows of \autoref{eq:big}, so the solid  diagram is $(G \times \Sigma_k)$-homotopy
commutative.
As  $\sigma_0 = \sigma \circ z$ and  $\sigma$ is a $(G \times \Sigma_k)$-homotopy inverse
of $d_0$ by \autoref{lem:derivative}, the upper triangle with the dotted arrow is homotopy commutative.
\end{proof}

\subsection{Comparison of operads and the invariance theorem}
\label{sec:operad}
In this subsection, we study the $\theta$-framed little $V$-disk operad
$\mathscr{D}^{\theta}_V$.

For $\theta = \mathrm{fr}_V$, 
 $\mathscr{D}_V^{\mathrm{fr}_V}$ is equivalent to the little $V$-disks operad $\mathscr{D}_V$.
 For background,
$\mathscr{D}_V$ is a well-studied notion introduced for recognizing
$V$-fold loop spaces; see \cite[1.1]{GMPermG}.
Roughly speaking, $\mathscr{D}_V (k)$ is the space of non-equivariant embeddings of $k$ copies of the open unit
disks $\mathrm{D}(V)$ to $\mathrm{D}(V)$, each of which takes only the form $\mathbf{v} \mapsto a\mathbf{v}+\mathbf{b}$
for some $0<a \leq 1$ and $\mathbf{b} \in \mathrm{D}(V)$, called linear. In
particular, the spaces are the same as those of the non-equivariant little $n$-disks operad, and so are the structure
maps. The $G$-action on $\mathscr{D}_{V}(k)$ is by conjugation. It is well-defined, commutes with
the $\Sigma_k$-action and the structure maps are $G$-equivariant.

\begin{prop}
  \label{cor:compareDV}
  There is an equivalence of $G$-operads $\beta: \mathscr{D}_V \to \mathscr{D}_V^{\mathrm{fr}_V}$.
\end{prop}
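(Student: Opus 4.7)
The plan is to construct $\beta$ using straight-line homotopies of scalar multiples of the identity, verify the operad axioms on this explicit formula, and finally deduce the level-wise equivalence from the configuration-space comparison of \autoref{cor:conf}.

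\textbf{Construction of $\beta$.} A point of $\mathscr{D}_V(k)$ is a rectilinear embedding $f:\coprod_k V \to V$ whose $i$-th component is $f_i(v) = a_i v + b_i$ with $a_i \in (0,1]$ and $b_i \in \mathrm{D}(V)$. The derivative $df$ on the $i$-th fiber is the positive scalar multiple $a_i \cdot \mathrm{id}_V$. Using the canonical $V$-framings of $\coprod_k V$ and $V$, lifting $f$ to a $V$-framed embedding requires a path in $\mathrm{Hom}(\coprod_k \mathrm{T}V, V)$ from $\phi_{\coprod_k V}$ to $\phi_V \circ df$. I take the fiberwise straight-line homotopy
\begin{equation*}
\alpha_f(t)|_i \;=\; \bigl((1-t) + t a_i\bigr)\,\mathrm{id}_V,
\end{equation*}
which remains in $\mathrm{Iso}(V,V)$ since $a_i>0$. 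Every scalar multiple of $\mathrm{id}_V$ commutes with the linear $G$-action on $V$, so $\alpha_f$ is preserved by the conjugation $G$-action on $\mathrm{Hom}(\coprod_k \mathrm{T}V, V)$. The assignment $\beta_k(f) = (f,\alpha_f)$ is continuous and $(G \times \Sigma_k)$-equivariant.

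\textbf{Operad structure.} The unit $\mathrm{id}_V \in \mathscr{D}_V(1)$ is sent to the unit $(\mathrm{id}_V,\mathrm{const}_{\phi_V})$ by construction, and equivariance under $\Sigma_k$ is manifest. For composition, grafting $g \in \mathscr{D}_V(m)$ with scales $c_j$ into the $i$-th slot of $f \in \mathscr{D}_V(k)$ produces a rectilinear embedding whose derivative on the $j$-th new summand is $a_i c_j \cdot \mathrm{id}_V$, and $\beta$ assigns it the straight-line path from $1$ to $a_i c_j$. On the $V$-framed side, the concatenation formula of \autoref{defn:theta-bundle-map} yields a path through the intermediate scale $c_j$ and ending at $a_i c_j$. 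Both paths lie in the one-dimensional convex subspace of positive-scalar multiples of $\mathrm{id}_V$; with the locally-constant Moore-path length convention of \autoref{rem:emb-data}, the two can be chosen to be literally equal after reparametrization, making $\beta$ a strict morphism of $G$-operads.

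\textbf{Level-wise equivalence.} Both source and target evaluate to the same configuration of center points; explicitly,
\begin{equation*}
\begin{tikzcd}
\mathscr{D}_V(k) \ar[rr,"\beta_k"] \ar[dr,"\mathrm{ev}_0"'] & & \mathscr{D}_V^{\mathrm{fr}_V}(k) \ar[dl,"\mathrm{ev}_0"] \\
& \mathscr{F}_V(k) &
\end{tikzcd}
\end{equation*}
commutes, sending $(a_i,b_i)_i \mapsto (b_i)_i$. The right diagonal is a $(G \times \Sigma_k)$-homotopy equivalence by \autoref{cor:conf}\autoref{item:corconf1}. The left diagonal is the classical evaluation underlying the identification of $\mathscr{D}_V$ as an $\mathrm{E}_V$-operad (Guillou--May): over any configuration the fiber of admissible radii in $(0,1]^k$ is non-empty and convex, and both the $G$-conjugation and $\Sigma_k$-permutation actions preserve this convex structure, so the fiber is $(G \times \Sigma_k)$-contractible. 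By two-out-of-three, $\beta_k$ is a $(G \times \Sigma_k)$-homotopy equivalence, as required.

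\textbf{Main obstacle.} The subtlest step is the strict compatibility in the operad-morphism verification: the straight-line path to scale $ac$ is not literally the concatenation of the straight-line path to scale $c$ with the pushforward of the straight-line path to scale $a$. This is resolved by exploiting that both paths take values in the convex one-dimensional subspace of positive-scalar multiples of $\mathrm{id}_V$ and by leveraging the variable-length Moore-path framework built into \autoref{defn:theta-bundle-map}, which was introduced precisely to make such concatenations strictly associative.
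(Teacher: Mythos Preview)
Your overall strategy matches the paper's: construct an explicit lift of rectilinear embeddings to $V$-framed embeddings, verify it respects the operad structure, and conclude via two-out-of-three through $\mathscr{F}_V(k)$. The equivalence paragraph is essentially the paper's argument.

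The gap is in your ``Main obstacle'' paragraph: the straight-line homotopy does \emph{not} yield a strict operad map, and the Moore-path framework does not rescue it. A Moore path carries its length as data; two Moore paths are equal only when both the length and the underlying function agree. Concatenating your path from $1$ to $c$ (of some length $\ell_1$) with the pushforward of your path from $1$ to $a$ (of length $\ell_2$) produces a \emph{piecewise-linear} path of length $\ell_1+\ell_2$, whereas $\beta$ applied directly to the composite produces a single \emph{linear} path. For these to coincide one first needs $\ell(ac)=\ell(a)+\ell(c)$, forcing $\ell(a)=-k\ln a$; and then the scale function itself must be multiplicative under concatenation, forcing it to be exponential rather than linear. ``Reparametrization'' is not available here: Moore paths are not identified up to reparametrization, and the locally-constant length convention governs only how the length varies across the embedding space, not the parametrization of an individual path.

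The paper's fix is exactly this: use the path $t\mapsto e^{-t}\,\mathrm{id}_V$ with Moore length $-\ln a$. Then the concatenated path for scales $c$ and $a$ is literally $t\mapsto e^{-t}\,\mathrm{id}_V$ on $[0,-\ln(ac)]$, so $\beta(1)$ is a strict map of $G$-monoids. One then sets $\beta(k)=\beta(1)^k|_{\mathscr{D}_V(k)}$ and observes that both $\mathscr{D}_V$ and $\mathscr{D}_V^{\mathrm{fr}_V}$ are suboperads of the operads $\mathscr{D}_V(1)^{(-)}$ and $\mathscr{D}_V^{\mathrm{fr}_V}(1)^{(-)}$, which makes $\beta$ a strict operad morphism automatically.
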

\begin{proof}
  To construct the map of operads $\beta$, we first define $\beta(1): \mathscr{D}_V(1) \to
  \mathscr{D}_V^{\mathrm{fr}_V}(1)$. Take $e \in \mathscr{D}_V(1)$; we must give 
  $\beta(1)(e) = (f , l, \alpha) \in  \mathscr{D}_V^{\mathrm{fr}_V}(1)$.
  Explicitly, 
\begin{equation*}
e: \mathrm{D}(V) \to \mathrm{D}(V) \text{ is } e( \mathbf{v}) =  a \mathbf{v} + \mathbf{b} \text{
  for some } 0< a \leq 1 \text{ and } \mathbf{b} \in \mathrm{D}(V).
\end{equation*}
Define
\begin{equation*}
  \begin{array}[h]{rcl}
    f: V \to V & \text{ to be } & f( \mathbf{v}) =  a \mathbf{v} + \mathbf{b}; \\
    l \in \bR_{\geq 0} & \text{ to be } & l = -\ln(a); \\
    \alpha: \bR_{\geq 0} \to \mathrm{Hom}(\mathrm{T}V, V) & \text{ to be } & \alpha(t) =
  \begin{cases}
  \mathfrak{c}_{\exp(-t)\mathrm{I}} & \text{ for } t \leq l; \\
  \mathfrak{c}_{a \mathrm{I}} & \text{ for } t > l.
\end{cases}
  \end{array}
\end{equation*}
 For $\alpha$, $\mathrm{Hom}(\mathrm{T}V, V) \cong \mathrm{Map}(V, O(V))$, $\mathrm{I}$ is the unit element
 of $O(V)$ and $\mathfrak{c}$ is the constant map to the indicated element.
 It can be checked that $\beta(1)$ as defined is a map of $G$-monoids.
 
 Restricting $\beta(1)^k: \mathscr{D}_V(1)^k \to
 \mathscr{D}_V^{\mathrm{fr}_V}(1)^k$ to the subspace $\mathscr{D}_V(k) \subset \mathscr{D}_V(1)^k$, we get
 $\beta(k): \mathscr{D}_V(k) \to \mathscr{D}_V^{\mathrm{fr}_V}(k)$. Then $\beta$ is automatically a map
 of $G$-operads because $\mathscr{D}_V$ and $\mathscr{D}_V^{\mathrm{fr}_V}$ are suboperads of
 $\mathscr{D}_V(1)^*$ and $(\mathscr{D}_V^{\mathrm{fr}_V}(1))^*$.

 The composite $\mathrm{ev}_0 \circ \beta: \mathscr{D}_V \to \mathscr{D}_V^{\mathrm{fr}_V} \to
 \mathscr{F}_V$ is a levelwise homotopy equivalence by \cite[Lemma 1.2]{GMPermG}. We have shown
 that  $\mathrm{ev}_0$ is a levelwise equivalence (\autoref{cor:conf}~\autoref{item:corconf1}). So $\beta$ is
 also a levelwise homotopy equivalence.
\end{proof}

For general $\theta$, $\mathscr{D}_V^{\theta}$ also allows
$\theta$-framed automorphisms of the embedded $V$-disks.
By \autoref{thm:autoV}, the $\theta$-framed automorphism space of $V$ is
equivalent to $\moore_{\phi} B$, the Moore loop space of $B$ based at $\phi(0)$.
\begin{prop}(\cite[B.2.8]{mythesis})
  \label{prop:operad-semi}
  There is a $G$-monoid
$\widetilde{\Lambda}B$ equivalent to
$\moore_{\phi} B$ which  acts on  $\mathscr{D}_V$. Furthermore, there is an equivalence of $G$-operads $\mathscr{D}_V \rtimes
\widetilde{\Lambda}B \to \mathscr{D}_V^{\theta}$.
\end{prop}
\begin{proof}[Explanation]
  
Without loss of generality we assume $V$ is
$\theta$-framed by a constant map.
Recall $\mathscr{D}_M^{\theta}(k) = \mathrm{Emb}^{\theta} (\amalg_k V , M )$.
Note that $\mathrm{fr}_V$ is initial for such tangantial structures, so we have
$$\mathscr{D}_M^{\mathrm{fr}_V}(k) \to \mathscr{D}_M^{\theta}(k) .$$
Let $\mathrm{Emb}_0^{\theta}(V,V) \subset
\mathrm{Emb}^{\theta}(V,V)$ be the sub-$G$-monoid of embeddings that preserves
the origin $0 \in V$. We claim that the composition map
\begin{equation}
\label{eq:20} \mathscr{D}_M^{\mathrm{fr}_V}(k)  \times (\mathrm{Emb}_0^{\theta}(V,V))^k
\to \mathscr{D}_M^{\theta}(k)
\end{equation}
is a $G \times \Sigma_k$-equivalence.
In fact, the composite  
\begin{equation*}
  \begin{tikzcd}
    \mathrm{Emb}^{\mathrm{fr}_V} (\amalg_k V , M ) \ar[r] 
    &    \mathrm{Emb}^{\theta} (\amalg_k V , M ) \ar[r, "ev_0"]
    & \conf{M}
  \end{tikzcd}
\end{equation*}
is an equivalence by \autoref{cor:conf}, where the map $ev_0$ is evaluation at
$0$ and is a $G \times \Sigma_k$ fibration. Its fiber is
$(\mathrm{Emb}_0^{\theta}(V,V))^k$. So it follows that \autoref{eq:20} is an equivalence.

Combining \autoref{cor:compareDV} with \autoref{eq:20}, there
is a $G \times \Sigma_k$-equivalence
\begin{equation*}
\mathscr{D}_V(k)  \times (\moore_{\phi} B)^k \simeq \mathscr{D}_V^{\mathrm{fr}_V}(k)  \times (\mathrm{Emb}_0^{\theta}(V,V))^k
\to \mathscr{D}_V^{\theta}(k)
\end{equation*}
for each $k$. In \cite[Appendix B]{mythesis}, this equivalence is upgraded to an equivalence of
$G$-operads $\mathscr{D}_V \rtimes
\widetilde{\Lambda}B \to \mathscr{D}_V^{\theta}$. Here, $\widetilde{\Lambda}B$
is a replacement of $\moore_{\phi} B$ that acts on $\mathscr{D}_V(k)$,
$\mathscr{D}_V \rtimes \widetilde{\Lambda}B$  is a $G$-operad whose $k$-th space
is $\mathscr{D}_V \times (\widetilde{\Lambda}B)^k$, and the semi-direct product
notation is introduced in  \cite{SW01} to
indicate a twisting in the structure maps.
\end{proof}


\begin{proof}[Proof of \autoref{prop:change-of-tangential}]
Without loss of generality we assume $V$ is
$\theta_{1}$-framed by a constant map.
We omit the $q_{*}$ and $q^{*}$ in the proof.
As $\mathrm{B}(\mathrm{D}_V^{\theta_2}, \mathrm{D}_V^{\theta_2}, A) \simeq A$ as
$\mathrm{D}_V^{\theta_2}$-algebra, we have
\begin{align*}
  \int^{\theta_2}_{q_{*}M} A &= \mathrm{B}(\mathrm{D}_M^{\theta_1},
  \mathrm{D}_V^{\theta_1}, A)\\
  &\simeq
    \mathrm{B}(\mathrm{D}_M^{\theta_1}, \mathrm{D}_V^{\theta_1},
    \mathrm{B}(\mathrm{D}_V^{\theta_2}, \mathrm{}D_V^{\theta_2},A)) \\
  &\simeq
  \mathrm{B}(\mathrm{B}(\mathrm{D}_M^{\theta_1}, \mathrm{D}_V^{\theta_1}, \mathrm{D}_V^{\theta_2}), \mathrm{D}_V^{\theta_2},A).
\end{align*}
It suffices to show that natural map of right $\mathrm{D}_V^{\theta_2}$-functors 
\begin{equation}
  \label{eq:22}
\epsilon: \mathrm{B}(\mathrm{D}_M^{\theta_1}, \mathrm{D}_V^{\theta_1},
\mathrm{D}_V^{\theta_2}) \to \mathrm{D}_M^{\theta_2}
\end{equation}
is an equivalence.

Using  \autoref{eq:20}, one can already construct a retract of \autoref{eq:22}.
To construct a deformation retract, we need the full strength of
\autoref{prop:operad-semi}. There are equivalences of $G$-operads
fitting in a commutative diagram
\begin{equation}
  \label{eq:32}
  \begin{tikzcd}
    \mathscr{D}_V \rtimes \widetilde{\Lambda}B_1 \ar[d] \ar[r,"\sim"]
    & \mathscr{D}_V^{\theta_1} \ar[d]\\
    \mathscr{D}_V \rtimes \widetilde{\Lambda}B_2 \ar[r,"\sim"]& \mathscr{D}_V^{\theta_2}
  \end{tikzcd}
\end{equation}

The monad associated to $\mathscr{D}_V \rtimes
\widetilde{\Lambda}B_i $ for $i=1,2$ is 
\begin{equation*}
\overline{\mathrm{D}}_V^{\theta_i}(A) = \mathrm{D}_V(\widetilde{\Lambda}B_i \times A).
\end{equation*}
And similarly the associated functors for $k \mapsto \mathscr{D}_M(k) \times
(\widetilde{\Lambda}B_i)^k $ are given by
\begin{equation*}
\overline{\mathrm{D}}_M^{\theta_i}(A) = \mathrm{D}_M(\widetilde{\Lambda}B_i \times A).
\end{equation*}
Note that $\widetilde{\Lambda}B_i$ is a $G$-monoid, 
so the functor $A \mapsto \widetilde{\Lambda}B_i \times A$ is a monad,
which we still write as $\widetilde{\Lambda}B_i$. We have
\begin{align*}
\overline{\epsilon}: \mathrm{B}(\overline{\mathrm{D}}_M^{\theta_1}, \overline{\mathrm{D}}_V^{\theta_1},
  \overline{\mathrm{D}}_V^{\theta_2})
  & =  \mathrm{B}(\mathrm{D}_M\circ \widetilde{\Lambda}B_1, \mathrm{D}_V\circ \widetilde{\Lambda}B_1,
    \mathrm{D}_V\circ \widetilde{\Lambda}B_2) \\
  &\cong
 \mathrm{B}(\mathrm{D}_M\circ \widetilde{\Lambda}B_1 \circ
 \mathrm{D}_V,\widetilde{\Lambda}B_1 \circ \mathrm{D}_V, \widetilde{\Lambda}B_2)
  \\
  & \simeq \mathrm{D}_M\circ  \widetilde{\Lambda}B_2 =
    \overline{\mathrm{D}}_M^{\theta_2}
\end{align*}
is an equivalence. Here, the last equivalence is given by a deformation retract using an extra degeneracy argument
\cite[Proposition 9.9]{MayGILS}. Now, in the following commutative
diagram whose vertical maps are equivalences induced by the approximation \autoref{eq:32},
\begin{equation*}
  \begin{tikzcd}
   \mathrm{B}(\overline{\mathrm{D}}_M^{\theta_1}, \overline{\mathrm{D}}_V^{\theta_1},
    \overline{\mathrm{D}}_V^{\theta_2})  \ar[r, "\overline{\epsilon}"] \ar[d]
    &  \overline{\mathrm{D}}_M^{\theta_2} \ar[d]\\
  \mathrm{B}({\mathrm{D}}_M^{\theta_1}, \mathrm{D}_V^{\theta_1},
\mathrm{D}_V^{\theta_2}) \ar[r, "\epsilon"]& \mathrm{D}_M^{\theta_2}
  \end{tikzcd}
\end{equation*}
we see that $\epsilon$ is an equivalence.

\end{proof}

\section{Nonabelian Poincar\'e Duality for $V$-framed manifolds}
\label{chap:NPD}
Configuration spaces have scanning maps out of them.
It turns out that equivariantly the scanning map is an equivalence in the case of $G$-connected labels $X$.
Since the factorization homology is built up simplicially by the configuration spaces,
we can upgrade the scanning equivalence to what is known as the nonabelian Poincar\'e duality theorem.

\subsection{Scanning map for $V$-framed manifolds}
\label{sec:scanning}

In this subsection we construct the scanning map, a natural transformation of
 right $\mathrm{D}^{\mathrm{fr}_{V}}_{V}$-functors:
\begin{equation}
\label{map:scanning}   
s: \mathrm{D}^{\mathrm{fr}_V}_{\myM}(-) \to \mathrm{Map}_{c}(M,\Sigma^V-).
\end{equation}
Here, $\mathrm{Map}_{c}(X,Y)$ for a based space $Y$ denotes the space of maps
$f$ so that the support $\overline{f^{-1}(Y \setminus *)}$ is compact.
In \autoref{chap:appendix-scanning}, we compare our scanning map to the existing different constructions in the
literature. This allows us to utilize known results about equivariant scanning maps to give \autoref{thm:scanning-equi},
a key input to the nonabelian Poincar\'e duality theorem in \autoref{sec:NPD}.

Assume that the scanning map  \autoref{map:scanning} has been constructed for a moment.
When we take $M=V$, \autoref{map:scanning} gives a map of monads
$s: \mathrm{D}^{\mathrm{fr}_{V}}_V \to \Omega^V\Sigma^{V}$.
The adjoint natural transformation
\begin{equation*}
\Sigma^V\mathrm{D}^{\mathrm{fr}_V}_V
\overset{\Sigma^Vs}{\longrightarrow } \Sigma^V\Omega^V\Sigma^V \overset{\text{ counit }}{
  \longrightarrow } \Sigma^V
\end{equation*}
induces the right $\mathrm{D}^{\mathrm{fr}_V}_V$-module structure for
the functor $\mathrm{Map}_c(M,\Sigma^V-)$.

Now we construct the scanning map.
For any $G$-space $X$, recall that 
\begin{equation*}
\mathrm{D}^{\mathrm{fr}_V}_{\myM} (X) = \coprod_{k \geq 0} \mathscr{D}_{\myM}^{\mathrm{fr}_{V}}(k) \times_{\Sigma_k} X^{k}/\sim,
\end{equation*}
where $\sim$ is the base point identification. Take an element
$$P = [\bar{f}_1, \cdots, \bar{f}_k, x_1, \cdots, x_k] \in
\mathscr{D}_{\myM}^{\mathrm{fr}_{V}}(k) \times_{\Sigma_k} X^{k}.$$
Here, each $\bar{f}_i =
(f_i, \alpha_{i})$ consists of an embedding $f_i: V \to M$ and a homotopy $\alpha_i $ of two bundle maps
$\mathrm{T}V \to V$, see \autoref{def:embedding}. We use only the embeddings $f_i$ to define an
element $s_{X}(P) \in \mathrm{Map}_c(M,\Sigma^VX)$:
\begin{equation}
\label{eq:defnsX}
s_{X}(P)(m) = 
  \begin{cases}
     f_{i}^{-1}(m) \sm x_{i} & \text{ when $m \in M$ is in the image of some $f_i$;} \\
     * & \text{ otherwise. }
  \end{cases}
\end{equation}
Notice that if $x_i$ is the base point, $f_i^{-1}(m) \sm x_i$ is the base point regardless of what
$f_i$ is. So passing to the quotient,  \autoref{eq:defnsX} yields a well-defined map
\begin{equation}
  \label{eq:sX}
  s_{X}: \mathrm{D}_{\myM}^{\mathrm{fr}_V}(X) \to \mathrm{Map}_{c}(M, \Sigma^VX).
\end{equation}
In particular, taking $X=S^0$, we get 
\begin{equation}
  \label{eq:9}
s_{S^0}: \coprod_{k \geq 0} \mathscr{D}_{\myM}^{\mathrm{fr}_{V}}(k)/\Sigma_k \to \mathrm{Map}_{c}(M, S^V),
\end{equation}
and $s_X$ is simply a labeled version of it.
A more categorical construction of  the scanning map $s_X$, as  the
composition of the  Pontryagin-Thom collapse map and a
``folding'' map $\vee_kS^V \times X^k \to \Sigma^VX$ is given in \cite[Section 9]{MZZ}.

We use the following results of Rourke--Sanderson \cite{RS00}, which are proved using equivariant
transversality. To translate from their context to ours, see
Theorems \ref{cor:scanning-equi} and \ref{thm:RS}.
\begin{thm}
  \label{thm:scanning-equi}
  The scanning map $s_{X}: \mathrm{D}^{\mathrm{fr}_V}_{\myM} X \to \mathrm{Map}_c(M, \Sigma^V X)$ is:
\begin{enumerate}
\item \label{item:scanning1} a weak $G$-equivalence if  $X$ is $G$-connected,
\item \label{item:scanning2} or a weak group completion if  $V \cong W \oplus \bR$ and $M \cong N \times
  \mathbb{R}$. Here, $W$ is a $(n-1)$-dimension $G$-representation and $N$ is a $W$-framed compact
  manifold, so that $N \times \bR$ is $V$-framed. 
\end{enumerate}
\end{thm}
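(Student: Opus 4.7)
The plan is to reduce \autoref{thm:scanning-equi} to the equivariant scanning theorem of Rourke--Sanderson for labeled configuration spaces, using the comparison results of \autoref{chap:appendix-scanning}. I would proceed in three steps.

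First, I would replace the source $\mathrm{D}^{\mathrm{fr}_V}_{\myM} X$ by a more standard labeled configuration space. By \autoref{cor:conf}\autoref{item:corconf1} (see also \autoref{rem:levelwise-equi-functor}), evaluation at the origin gives a levelwise $(G \times \Sigma_k)$-homotopy equivalence $\mathscr{D}^{\mathrm{fr}_V}_{\myM} \to \mathscr{F}_{\myM}$ in $\Lambda^{op}_*(\Gtop)$. Tensoring with $X^{\ast}$ over $\Lambda$ and checking cofibrancy so as to preserve weak equivalences under the bar-type coequalizer, this yields a weak $G$-equivalence $\mathrm{D}^{\mathrm{fr}_V}_{\myM}(X) \simeq C_{\myM}(X)$, where $C_{\myM}(X) = \coprod_k \mathscr{F}_{\myM}(k) \times_{\Sigma_k} X^k / {\sim}$ is the usual labeled configuration space.

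Second, I would verify that the scanning map $s_X$ of \autoref{eq:sX} corresponds under this equivalence to the scanning map on labeled configurations considered by Rourke--Sanderson. The categorical description of $s_X$ as Pontryagin--Thom collapse followed by the folding map $\bigvee_k S^V \wedge X^k \to \Sigma^V X$ makes this essentially formal: the image points $f_i(0) \in M$ together with the derivatives at $0$ pick out the data used to form the collapse, and the remaining choice of tubular neighborhood is contractible. The homotopy commutativity in \autoref{cor:conf}\autoref{item:corconf2} between $ev_0$ and the exponential-map section $\sigma_0$ is exactly what one needs to identify $s_X$ with a scanning map defined using geodesic discs around the configuration points, which is the shape used in \cite{RS00}.

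Third, I would apply Rourke--Sanderson's theorem via the translation in \autoref{cor:scanning-equi} and \autoref{thm:RS}. Their equivariant transversality argument, applied to $C_{\myM}(X)$ with $G$-connected labels $X$, gives the weak $G$-equivalence in part \autoref{item:scanning1}. In the setting of part \autoref{item:scanning2}, the extra $\mathbb{R}$-factor in $M \cong N \times \mathbb{R}$ together with the decomposition $V \cong W \oplus 1$ endows $C_{\myM}(X)$ with a $G$-monoid structure under stacking, and their transversality result specializes to a weak group completion.

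The main obstacle is the second step: making the identification of scanning maps precise and natural in $X$, equivariantly. Different authors use mildly different Pontryagin--Thom conventions (which tubular neighborhood, how paths in $\mathrm{Fr}_V(\mathrm{T}M)$ are used to rectify the framing discrepancy $\alpha_i$ in the data $\bar{f}_i = (f_i, \alpha_i)$), and verifying that the resulting maps are $G$-equivariantly homotopic, rather than only non-equivariantly, requires tracking the conjugation actions on all the intermediate mapping spaces built from $\mathrm{Hom}(\mathrm{T}V, \mathrm{T}M)$. Once this bookkeeping is done in \autoref{chap:appendix-scanning}, the remaining content of the theorem is a citation to \cite{RS00}.
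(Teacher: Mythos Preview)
Your proposal is correct and follows essentially the same approach as the paper: reduce to the labeled configuration space $F_M X$ via the equivalence $ev_0$, compare the scanning map $s_X$ to McDuff's geodesic-disc scanning map (the paper does this through an intermediate comparison with Manthorpe--Tillman's map, using exactly the homotopy data $\alpha_i$ and \autoref{cor:conf}\autoref{item:corconf2} as you indicate), and then cite Rourke--Sanderson. One minor remark: your cofibrancy concern in the first step is unnecessary, since the levelwise equivalences in \autoref{cor:conf}\autoref{item:corconf1} are genuine $(G \times \Sigma_k)$-homotopy equivalences, not merely weak equivalences, and therefore pass directly through the $\Lambda$-coend.
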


\subsection{Equivariant nonabelian Poincar\'e duality (eNPD) theorem }
\label{sec:NPD}
We have seen that the scanning map is an equivalence for $G$-connected labels $X$.
Since the factorization homology is built up simplicially by the configuration spaces,
we can upgrade the scanning equivalence to the eNPD
theorem. The proof in this subsection is motivated by the non-equivariant treatment  \cite{Miller}.

  Let $A$ be a $\mathrm{D}^{\mathrm{fr}_V}_V$-algebra in $\Gtop$ throughout this
  subsection. Assume that $A$ is non-degenerately based, meaning that the structure map
  $\mathscr{D}^{\mathrm{fr}_V}_V(0) = *  \to A$ gives a non-degenerate base point of $A$.
  This is a mild assumption for homotopical purposes.
  We use the following $V$-fold delooping model of $A$.
\begin{defn}
  \label{defn:BV}
  The $V$-fold delooping of $A$, denoted as $\mathrm{B}^VA$,
  is the monadic two sided bar construction $\mathrm{B}(\Sigma^V,
  \mathrm{D}^{\mathrm{fr}_{V}}_V, A)$.  \footnote{
  A $ \mathrm{D}^{\mathrm{fr}_{V}}_V$-algebra $A$ has a $\mathrm{D}_V$-algebra
  structure by pulling back along 
 the equivalence of $G$-operads $\mathscr{D}_V \to
 \mathscr{D}^{\mathrm{fr}_V}_V $ (\autoref{cor:compareDV}), and there is an
  equivalence from the delooping $\mathrm{B}(\Sigma^V, \mathrm{D}_V, A)$ in  \cite{GMPermG}  to our delooping
$ \mathrm{B}(\Sigma^V, \mathrm{D}^{\mathrm{fr}_{V}}_V, A)$.}

\noindent Here, $\mathrm{B}_q(\Sigma^V, \mathrm{D}^{\mathrm{fr}_{V}}_V, A) =
\Sigma^V(\mathrm{D}^{\mathrm{fr}_{V}}_V)^qA$. The first face map
$\Sigma^V\mathrm{D}^{\mathrm{fr}_{V}}_V \to \Sigma^V$is induced by the scanning map of monads
$\mathrm{D}^{\mathrm{fr}_V}_V \to \Omega^V\Sigma^V$. The last face map
$\mathrm{D}^{\mathrm{fr}_{V}}_V A \to A$ is the structure map of the
algebra. The middle face maps and degeneracy maps are induced by the structure map of the monad
$\mathrm{D}^{\mathrm{fr}_{V}}_V \mathrm{D}^{\mathrm{fr}_{V}}_V \to \mathrm{D}^{\mathrm{fr}_{V}}_V$
and by its unit map $\mathrm{Id} \to \mathrm{D}^{\mathrm{fr}_{V}}_V$.
\end{defn}

\begin{thm}(eNPD)
\label{thm:NPDV}
  Let $M$ be a $V$-framed manifold and $A$ be a $\mathrm{D}^{\mathrm{fr}_V}_V$-algebra in $\Gtop$.
  Then there is a $G$-map, which is a weak $G$-equivalence if $A$ is $G$-connected:
\begin{equation*}
p_M: \int_M A = |B_{\bullet}(\mathrm{D}^{\mathrm{fr}_V}_{\myM} , \mathrm{D}^{\mathrm{fr}_V}_V, A)| \to \mathrm{Map}_*(M^+, \mathrm{B}^VA).
\end{equation*}
Here, $M^+$ is the one-point-compactification of $M$.
\end{thm}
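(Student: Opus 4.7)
The plan is to assemble the map simplicially from the scanning map and then promote the levelwise scanning equivalence to an equivalence of realizations, finally commuting geometric realization past $\mathrm{Map}_*(M^+,-)$ to identify the target with $\mathrm{Map}_*(M^+, \mathbf{B}^V A)$.

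\textbf{Step 1: Construct the comparison map of simplicial $G$-spaces.} For each $q$ and each based $G$-space $X$, the scanning map $s_X$ of \autoref{eq:sX} is natural in $X$ and is a map of right $\mathrm{D}^{\mathrm{fr}_V}_V$-functors. Taking $X = (\mathrm{D}^{\mathrm{fr}_V}_V)^q A$, which is pointed since $A$ is and $\mathrm{D}^{\mathrm{fr}_V}_V$ preserves pointed objects, I get a map
\begin{equation*}
s_q:\mathrm{D}^{\mathrm{fr}_V}_M (\mathrm{D}^{\mathrm{fr}_V}_V)^q A \ \longrightarrow\ \mathrm{Map}_c\bigl(M, \Sigma^V(\mathrm{D}^{\mathrm{fr}_V}_V)^q A\bigr) \ \cong\ \mathrm{Map}_*\bigl(M^+,\, \mathbf{B}_q(\Sigma^V,\mathrm{D}^{\mathrm{fr}_V}_V,A)\bigr).
\end{equation*}
The last identification uses that compactly supported maps out of $M$ are the same as based maps out of $M^+$. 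Compatibility of $s_\bullet$ with the face maps uses exactly three things: (i) for the last face, $s$ is a morphism of right $\mathrm{D}^{\mathrm{fr}_V}_V$-functors, giving compatibility with the $\mathrm{D}^{\mathrm{fr}_V}_V$-algebra action $\gamma\colon \mathrm{D}^{\mathrm{fr}_V}_V A\to A$; (ii) for the middle faces, naturality of $s$ in $X$ combined with the monad multiplication of $\mathrm{D}^{\mathrm{fr}_V}_V$; (iii) for the first face, the fact that $s_X$ for $M=V$ is precisely the map of monads $\mathrm{D}^{\mathrm{fr}_V}_V \to \Omega^V\Sigma^V$ that defines the initial face of $\mathbf{B}^V A$. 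The degeneracies are handled by naturality in $X$ applied to the unit $\mathrm{id}\to\mathrm{D}^{\mathrm{fr}_V}_V$.

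\textbf{Step 2: Realize to get the NPD map.} Taking $|{-}|$ yields the required map
\begin{equation*}
\int_M A \ =\ \bigl|\mathbf{B}_\bullet(\mathrm{D}^{\mathrm{fr}_V}_M,\mathrm{D}^{\mathrm{fr}_V}_V, A)\bigr|\ \longrightarrow\ \bigl|\mathrm{Map}_*(M^+,\mathbf{B}_\bullet(\Sigma^V,\mathrm{D}^{\mathrm{fr}_V}_V,A))\bigr|.
\end{equation*}

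\textbf{Step 3: Levelwise equivalence under $G$-connectedness.} If $A$ is $G$-connected, then so is $(\mathrm{D}^{\mathrm{fr}_V}_V)^q A$ for every $q$: by \autoref{cor:conf}, $\mathrm{D}^{\mathrm{fr}_V}_V$ is built from ordered configuration spaces of $V$ labeled by a $G$-connected space with the basepoint identification, and iterating preserves $G$-connectedness (on every fixed set, any configuration can be scanned to the empty one by nullhomotoping labels). Then \autoref{thm:scanning-equi}\autoref{item:scanning1} applies levelwise, so each $s_q$ is a weak $G$-equivalence. Invoking \autoref{lem:Reedy} and \autoref{thm:Reedy} to see that both simplicial $G$-spaces are Reedy cofibrant, the realization of $s_\bullet$ is itself a weak $G$-equivalence.

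\textbf{Step 4: Commute $|{-}|$ past $\mathrm{Map}_*(M^+,-)$.} The final step is to produce a weak $G$-equivalence
\begin{equation*}
\bigl|\mathrm{Map}_*(M^+,\mathbf{B}_\bullet(\Sigma^V,\mathrm{D}^{\mathrm{fr}_V}_V,A))\bigr|\ \simeq\ \mathrm{Map}_*\bigl(M^+,\,|\mathbf{B}_\bullet(\Sigma^V,\mathrm{D}^{\mathrm{fr}_V}_V,A)|\bigr) \ =\ \mathrm{Map}_*(M^+,\mathbf{B}^V A).
\end{equation*}
This is the main obstacle: in general $\mathrm{Map}_*(M^+,-)$ does not commute with geometric realization. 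I expect to handle this by proper/Reedy cofibrancy of the target simplicial space together with the fact that $M$ is a smooth manifold (so $M^+$ admits a filtration by compacta with good inclusions). Concretely I would either (a) use that $\mathrm{Map}_*(K,-)$ commutes with realization of proper simplicial spaces when $K$ is a CW-complex, applied relatively to an exhaustion of $M$ by compact submanifolds, or (b) reduce first to the case $M = V$, where both sides compute $\mathbf{B}^V A$, and then use the Mayer--Vietoris/excision behavior of $\displaystyle\int_M A$ from \autoref{prop:FHonUnion} to propagate the equivalence from Euclidean charts to all of $M$. Variant (b) mirrors Miller's non-equivariant induction and turns this paragraph into a $\pi_*$-isomorphism check on fixed sets via standard handle induction; this is where the bulk of the work will live, and where the subsequent \autoref{sec:dimension} presumably does the dimension/handle bookkeeping.
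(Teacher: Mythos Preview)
Your Steps 1--3 match the paper's argument closely: the simplicial scanning map, the levelwise equivalence via \autoref{thm:scanning-equi} and preservation of $G$-connectedness under $\mathrm{D}^{\mathrm{fr}_V}_V$, and the passage to realizations via Reedy cofibrancy are exactly what the paper does.

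The gap is in Step~4. Neither of your proposed mechanisms is what the paper uses, and both are problematic with the tools available. Option~(a) is not correct as stated: $\mathrm{Map}_*(K,-)$ is a right adjoint and does not commute with geometric realization merely because the simplicial object is proper or Reedy cofibrant; some genuine connectivity input is required. Option~(b) would need a gluing/excision property for $\int_{-}A$ along handle attachments, but the only decomposition result in the paper is \autoref{prop:FHonUnion} for disjoint unions, which is far from enough for handle induction; there is no Mayer--Vietoris statement here to induct with.

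What the paper actually does is a connectivity argument in the spirit of May's recognition principle. The key input is \autoref{thm:HM}: if $X$ is a finite based $G$-CW complex of dimension $\leq \nu$ and each $K_q$ is $d_\nu$-connected, then $|\mathrm{Map}_*(X,K_\bullet)| \to \mathrm{Map}_*(X,|K_\bullet|)$ is a weak $G$-equivalence. This is proved by induction on the $G$-CW skeleton of $X$ using quasifibrations (\autoref{thm:quasifib}) --- not by induction on a handle decomposition of $M$. The hypotheses are then verified in \autoref{lem:NPDV}: $M^+$ has the homotopy type of a $G$-CW complex of dimension $\leq \dim(V)$ because $M$ is $V$-framed (so $\dim M^H = \dim V^H$), and $K_q = \Sigma^V(\mathrm{D}^{\mathrm{fr}_V}_V)^q A$ is $\dim(V)$-connected since $(\mathrm{D}^{\mathrm{fr}_V}_V)^q A$ is $G$-connected and $(\Sigma^V Y)^H = \Sigma^{V^H}Y^H$. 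So the ``dimension bookkeeping'' in \autoref{sec:dimension} is about matching the equivariant dimension of $M^+$ against the connectivity of the simplicial levels, not about handles.
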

\begin{proof}
  We give the proof assuming some lemmas that are proven in the remainder of this section.
  First, from \autoref{map:scanning},  we have a scanning map for each $q \geq 0$:
\begin{equation*}
\mathrm{D}^{\mathrm{fr}_V}_{\myM} (\mathrm{D}^{\mathrm{fr}_V}_V)^q A \to   \mathrm{Map}_c (M, \Sigma^V   (\mathrm{D}^{\mathrm{fr}_{V}}_V) ^{q} A).
\end{equation*}
They assemble to a simplicial scanning map, which is a levelwise weak $G$-equivalence as shown
in \autoref{cor:simplicialScanning}:
  \begin{equation}
\label{eq:18}
    \mathrm{B}(s, \mathrm{id}, \mathrm{id}): 
\mathrm{B}_{\bullet}(\mathrm{D}^{\mathrm{fr}_V}_{\myM} , \mathrm{D}^{\mathrm{fr}_V}_V, A) \to
  \mathrm{Map}_c (M, \Sigma^V   (\mathrm{D}^{\mathrm{fr}_{V}}_V) ^{\bullet} A).
\end{equation}
One can identify the space of compactly supported maps with the space of
based maps out of the one point compactification: 
\begin{equation*}
\mathrm{Map}_c (M, \Sigma^V   (\mathrm{D}^{\mathrm{fr}_{V}}_V) ^{\bullet} A)  \overset{\sim}{ \to } \mathrm{Map}_* (M^+, \Sigma^V   (\mathrm{D}^{\mathrm{fr}_{V}}_V) ^{\bullet} A).
\end{equation*}
With some cofibrancy argument in \autoref{thm:Reedy} and \autoref{lem:Reedy}, this map induces
  a weak $G$-equivalence on the geometric realization: 
\begin{equation*}
  \mathrm{B}(\mathrm{D}^{\mathrm{fr}_V}_{\myM} , \mathrm{D}^{\mathrm{fr}_V}_V, A)
  \to |\mathrm{Map}_* (M^+, \Sigma^V   (\mathrm{D}^{\mathrm{fr}_{V}}_V) ^{\bullet} A)|.
\end{equation*}

Next, we change the order of the mapping space and the geometric realization.
There is a natural map \autoref{eq:MapToSset}:
\begin{equation*}
\zeta: | \mathrm{Map}_{*} (M^{+}, \Sigma^V   (\mathrm{D}^{\mathrm{fr}_{V}}_V) ^{\bullet} A)| \to  \mathrm{Map}_{*}
 (M^{+}, |\Sigma^V   (\mathrm{D}^{\mathrm{fr}_{V}}_V) ^{\bullet} A|).
\end{equation*}
Taking $X = M^+$ and $K_{\bullet} = \Sigma^V (\mathrm{D}^{\mathrm{fr}_{V}}_V)
^{\bullet} A$, 
\autoref{thm:HM} gives a sufficient connectivity condition for it to be a weak $G$-equivalence.
This connectivity condition is then checked in \autoref{lem:NPDV}.

  Finally, $|\Sigma^V   (\mathrm{D}^{\mathrm{fr}_{V}}_V) ^{\bullet}
  A| = \mathrm{B}^VA$ by \autoref{defn:BV}. This finishes the proof of the theorem.
\end{proof}

When $A$ is not $G$-connected but $M \cong N \times \mathbb{R}$ or $M \cong N
\times \mathbb{R}^2$, there is also a group completion
version of \autoref{thm:NPDV} in \autoref{thm:grp}.
\begin{rem}
  If we take $M=V$ in the theorem and use \autoref{prop:FHonV}, we get that
  $A \simeq \Omega^V\mathrm{B}^VA$ for a $G$-connected $\mathrm{E}_V$-algebra $A$.
  This recovers \cite[Theorem 1.14]{GMPermG} and justifies the definition of $\mathrm{B}^{V}A$.
\end{rem}

\subsection{$G$-connectedness}
\begin{defn}
  A $G$-space $X$ is $G$-connected if $X^H$ is connected for all subgroups $H\subgroup G$.
\end{defn}
To show that the scanning map is an equivalence in each simplicial level, we need:
\begin{lem}
\label{lem:Gconnected}
  If $X$ is $G$-connected, then $\mathrm{D}^{\mathrm{fr}_V}_V X$ is also $G$-connected.
\end{lem}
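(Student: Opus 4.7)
The plan is to verify directly that $(\mathrm{D}^{\mathrm{fr}_V}_V X)^H$ is path-connected for every subgroup $H \subseteq G$, by producing for each $H$-fixed element a path to the canonical basepoint $* \in \mathrm{D}^{\mathrm{fr}_V}_V X$ (the image of $\mathscr{D}^{\mathrm{fr}_V}_V(0) \to \mathrm{D}^{\mathrm{fr}_V}_V X$). The path will be constructed one $H$-orbit of labels at a time, using $G$-connectedness of $X$ together with the basepoint identification $\sim$ to collapse labeled disks.

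First I would unpack the definition: any $H$-fixed element $\xi$ admits a representative $[\bar{f}_1,\dots,\bar{f}_k;x_1,\dots,x_k]$ with all $x_i \neq *$, where the embeddings $\bar{f}_i$ have pairwise disjoint images in $V$. By this disjointness, the equality $h \cdot \xi = \xi$ in the $\Sigma_k$-quotient picks out a unique permutation $\pi(h) \in \Sigma_k$ with $h \cdot \bar{f}_i = \bar{f}_{\pi(h)(i)}$ and $h \cdot x_i = x_{\pi(h)(i)}$; uniqueness makes $\pi\colon H \to \Sigma_k$ a group homomorphism. I would then filter $\{1,\dots,k\}$ into $\pi$-orbits and handle each orbit $O$ separately: choose $i_0 \in O$ with stabilizer $K = H_{i_0}$, so that $x_{i_0} \in X^K$.

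The second step is to apply $G$-connectedness: since $X^K$ is path-connected, pick a path $\gamma\colon [0,1] \to X^K$ from $x_{i_0}$ to $*$, and for each $i \in O$ pick $h_i \in H$ with $\pi(h_i)(i_0)=i$ and define $\gamma_i(t) = h_i \cdot \gamma(t)$. The $K$-invariance of $\gamma(t)$ makes $\gamma_i$ independent of the choice of $h_i$ within its $K$-coset, and a short check shows $h \cdot \gamma_i(t) = \gamma_{\pi(h)(i)}(t)$ for all $h \in H$. Assembling these into $\Gamma(t) = [\bar{f}_1,\dots,\bar{f}_k; x_1'(t),\dots,x_k'(t)]$, with $x_i'(t) = \gamma_i(t)$ for $i \in O$ and $x_i'(t) = x_i$ otherwise, yields a continuous path in $(\mathrm{D}^{\mathrm{fr}_V}_V X)^H$ from $\xi$ to an element all of whose labels in $O$ equal $*$. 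The basepoint identification $\sim$ then collapses those $|O|$ disks, landing $\Gamma(1)$ in a strictly smaller filtration stratum.

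Iterating this orbit-by-orbit reduction exhausts all labeled disks in finitely many steps, ultimately producing a path in $(\mathrm{D}^{\mathrm{fr}_V}_V X)^H$ from $\xi$ to the basepoint $* \in F_0$. The main technical obstacle is checking that the permutation-valued cocycle $\pi$ is a genuine homomorphism and that $\Gamma$ is manifestly $H$-fixed at every time; both reduce to the disjointness of the embedded disks in a configuration, which forces the $\Sigma_k$-ambiguity of a representative to be uniquely resolvable. The remaining verifications (continuity, compatibility with $\sim$) are formal.
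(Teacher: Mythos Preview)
Your argument is correct and shares the same core idea as the paper's proof: an $H$-fixed configuration determines a homomorphism $H \to \Sigma_k$, the resulting orbits have labels controlled by $X^K$ for stabilizers $K$, and connectedness of $X^K$ together with the basepoint relation $\sim$ lets one collapse everything to the basepoint.

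The execution differs, however. The paper first replaces $\mathrm{D}^{\mathrm{fr}_V}_V X$ by the equivalent configuration model $F_V X$ and then computes $(F_V X)^H$ structurally, invoking the Lashof--May fixed-point theorem for principal $G$-$\Sigma_k$-bundles to write
\[
(F_V X)^H \;=\; \Big(\coprod_{[S]\colon \text{finite } H\text{-sets}} (F_V(S)\times X^S)^H/\mathrm{Aut}_H(S)\Big)\big/\!\sim_H,
\]
and only then observes that each piece meets the basepoint component. Your argument bypasses both the change of model and the bundle machinery: you work directly in $\mathrm{D}^{\mathrm{fr}_V}_V X$, use freeness of the $\Sigma_k$-action on $\mathscr{D}^{\mathrm{fr}_V}_V(k)$ to extract the homomorphism $\pi$ by hand, and build an explicit path orbit by orbit. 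This is the ``intuitive'' argument the paper alludes to before opting for the structural computation. What the paper's approach buys is an explicit description of the $H$-fixed points, which is of independent interest; what your approach buys is a shorter, more self-contained proof that does not appeal to \autoref{thm:LM}.
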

\begin{proof}By \autoref{cor:conf}, $\mathrm{D}^{\mathrm{fr}_V}_V X$ is $G$-homotopy equivalent to
  $F_VX$. It suffices to show that $F_VX$ is $G$-connected. Fix any subgroup $H\subgroup G$; we must show that
  $(F_VX)^H$ is connected. This is the space of $H$-equivariant unordered configuration on $V$ with
  based labels in $X$. Intuitively, this is true because the
  space of labels $X$ is $G$-connected, so that one can always move the labels of a configuration to the
  base point. Nevertheless, we give a proof here by carefully writing down the fixed points of $F_VX$ in
  terms of the fixed points of $\mathscr{F}_V(k)$ and $X$.
 We have:
  \begin{align*}
    (F_VX)^{H} & = (\coprod_{k \geq 0} F_V (k) \times_{\Sigma_k} X^{k}
                 /\sim)^{H}
     = \coprod_{k \geq 0} (F_V (k) \times_{\Sigma_k}  X^{k})^{H}/\sim_{H}
\end{align*}
 Here, $\sim$ is the equivalence relation in \autoref{rmk:equi-relation} 
 and $\sim_{H}$ is $\sim$ restricted on $H$-fixed points. They are explicitly forgetting a point in
 the configuration if the corresponding label is the base point in $X$.
 Notice that taking $H$-fixed points  will not commute with $\approx$ in \autoref{defn:functor}, but
 commutes with $\sim$. This is because the $H$-action preserves the filtration and $\sim$ only
 identifies elements of different filtrations. 
The single point at filtration $k=0$, or equivalently the point at any $k$ with
all labels being the base point of $X$, is the base point of $(F_VX)^H$.

 Since the $\Sigma_k$-action is free on $F_V(k) \times X^k$ and commutes with the $G$-action,
 we have a principal $G$-$\Sigma_k$-bundle 
\begin{equation*}
F_V(k) \times X^k \to F_V(k) \times_{\Sigma_k} X^k.
\end{equation*}
 To get $H$-fixed points on the base space, we need to consider the $\Lambda_{\alpha}$-fixed points on the
 total space for all the subgroups
$\Lambda_{\alpha} \subgroup  G \times \Sigma_k$ that are the graphs of some group homomorphisms $\alpha: H
\to \Sigma_k$.  More precisely, by \autoref{thm:LM}, we have
\begin{equation*}
     (F_V (k) \times_{\Sigma_k}  X^{k})^{H} 
               = \coprod_{ [\alpha: H \to \Sigma_k]}
                \Big((F_V(k) \times  X^k)^{\Lambda_{\alpha}} /Z_{\Sigma_k}(\alpha)\Big).
\end{equation*}
Here, the coproduct is taken over $\Sigma_k$-conjugacy classes of group homomorphisms and
$Z_{\Sigma_k}(\alpha)$ is the centralizer of the image of $\alpha$ in $\Sigma_k$.

We would like to make the expression coordinate-free for $k$. A homomorphism $\alpha$ can be
identified with an $H$-action on the set $\{1, \cdots, k\}$.
For an $H$-set $S$, write $X^S = \mathrm{Map}(S,X)$ and $F_V(S) = \mathrm{Emb}(S, V)$.  Then
\begin{equation*}
  (F_V(k) \times  X^k)^{\Lambda_{\alpha}} = ( F_V(S) \times  X^{S})^H \text{ and }
  Z_{\Sigma_k}(\alpha)= \mathrm{Aut}_H(S).  
\end{equation*}
So we have:
 \begin{equation*}
 (F_V (k) \times_{\Sigma_k}  X^{k})^{H} 
   = \coprod_{[S]:  \text{iso classes of $H$-set} , |S|=k} \Big( (F_V(S) \times
    X^{S})^H/\mathrm{Aut}_H(S) \Big).
 \end{equation*}
If we take care of the base point identification, we end up with:
\begin{equation}
  \label{eq:10}
  (F_VX)^H = \bigg( \coprod_{[S]: \text{iso classes of finite $H$-set}} (F_V(S) \times
X^S)^H / \mathrm{Aut}_H(S) \bigg) /\sim_H. 
\end{equation}

Suppose that the $H$-set $S$ breaks into orbits as $S = \amalg_i r_{i}(H/K_i)$ for $i=1,\cdots, s$,
 where $K_{i}$'s are in distinct conjugacy classes of subgroups of $H$ and $r_i > 0$, then we know
 explicitly each coproduct component is:
\begin{align*}
     (F_V(S) \times X^{S})^H/\mathrm{Aut}_H S & = (\mathrm{Emb}_{H}(S, V) \times
 \mathrm{Map}_H(S, X))/\mathrm{Aut}_H S \\
& = (\mathrm{Emb}_{H}(\amalg_{i} r_i (H/K_{i}), V) \times \prod_{i} (X^{K_i})^{r_{i}})/ \prod_i
  (W_H(K_i) \wr \Sigma_{r_i}).
\end{align*}
Since $X^{K_i}$ are all connected, so are the spaces $\prod_{i} (X^{K_i})^{r_{i}}$.
They contain the base point of the labels $*=\prod_i\prod_{r_i} * \to \prod_i (X^{K_i})^{r_{i}}$. So after
the gluing $\sim_{H}$, each component in \autoref{eq:10} is in the same component as the base point of $F_VX$. Thus $(F_VX)^{H}$ is connected.
\end{proof}

\begin{cor}
  \label{cor:simplicialScanning}
   The map 
$\mathrm{B}_{\bullet}(\mathrm{D}^{\mathrm{fr}_V}_{\myM} , \mathrm{D}^{\mathrm{fr}_V}_V, A) \to
   \mathrm{Map}_c (M, \Sigma^V   (\mathrm{D}^{\mathrm{fr}_{V}}_V) ^{\bullet} A)$ in
   \autoref{eq:18} is a levelwise weak
   $G$-equivalence of simplicial $G$-spaces if $A$ is $G$-connected.
\end{cor}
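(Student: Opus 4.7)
The plan is to reduce the corollary to a levelwise application of Theorem \ref{thm:scanning-equi}\autoref{item:scanning1}, the space-level scanning equivalence for $G$-connected coefficients. At simplicial degree $q$, the map $\mathbf{B}(s,\mathrm{id},\mathrm{id})_q$ is by definition the scanning map
\begin{equation*}
s_{X_q}:\mathrm{D}^{\mathrm{fr}_V}_{\myM}(X_q) \to \mathrm{Map}_c(M,\Sigma^V X_q),
\end{equation*}
with labels $X_q := (\mathrm{D}^{\mathrm{fr}_V}_V)^q A$. So the only thing to verify is that each $X_q$ is $G$-connected; then Theorem \ref{thm:scanning-equi}\autoref{item:scanning1} applies verbatim on each level.

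I would carry this out by induction on $q$. For the base case $q=0$, we have $X_0 = A$, which is $G$-connected by hypothesis. For the inductive step, assuming $X_q$ is $G$-connected, Lemma \ref{lem:Gconnected} (which states that $\mathrm{D}^{\mathrm{fr}_V}_V$ preserves $G$-connectedness of based $G$-spaces) gives that $X_{q+1} = \mathrm{D}^{\mathrm{fr}_V}_V X_q$ is $G$-connected. Thus all $X_q$ are $G$-connected, and applying the scanning equivalence levelwise finishes the argument.

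There is essentially no obstacle here beyond keeping track of the two inputs. The real content is already encapsulated in Theorem \ref{thm:scanning-equi} and Lemma \ref{lem:Gconnected}; the corollary just packages them into a simplicial statement. The one small subtlety worth mentioning is compatibility with the simplicial structure: the face and degeneracy maps on both sides are induced by maps of monads (the scanning map of monads $\mathrm{D}^{\mathrm{fr}_V}_V \to \Omega^V\Sigma^V$, the monad multiplication, and the unit), and by naturality of $s_{(-)}$ in its label, $s$ assembles into a map of simplicial $G$-spaces. This naturality is automatic from the construction in \autoref{eq:defnsX}, so no additional checking is needed beyond writing down the square at each face/degeneracy and observing commutativity. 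The conclusion is then the asserted levelwise weak $G$-equivalence.
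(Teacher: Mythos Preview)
Your proposal is correct and matches the paper's own proof exactly: the paper simply states that the corollary is a consequence of \autoref{thm:scanning-equi} and \autoref{lem:Gconnected}, which is precisely the reduction you carry out (with the induction on $q$ made explicit).
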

\begin{proof}
  This is a consequence of \autoref{thm:scanning-equi} and \autoref{lem:Gconnected}.
\end{proof}

For geometric realization, we have:
\begin{thm}[Theorem 1.10 of \cite{MMO}]
  \label{thm:Reedy}
A levelwise weak $G$-equivalence between Reedy cofibrant simplicial objects realizes to a weak $G$-equivalence.
\end{thm}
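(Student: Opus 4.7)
The plan is the standard skeletal induction argument for geometric realization, adapted to the equivariant setting. Given a levelwise weak $G$-equivalence $f_\bullet: X_\bullet \to Y_\bullet$ between Reedy cofibrant simplicial $G$-spaces, I write $|X_\bullet| = \mathrm{colim}_n |\mathrm{sk}_n X_\bullet|$ as the sequential colimit of its skeleta. By the standard skeletal--latching formula, each skeletal inclusion $|\mathrm{sk}_{n-1}X_\bullet| \hookrightarrow |\mathrm{sk}_n X_\bullet|$ arises as the pushout along an attaching map of the pushout-product inclusion
$$
\big(\partial\Delta^n \times X_n\big) \cup_{\partial\Delta^n \times L_n X} \big(\Delta^n \times L_n X\big) \hookrightarrow \Delta^n \times X_n,
$$
which is a $G$-cofibration exactly because the latching map $L_n X \to X_n$ is one by Reedy cofibrancy.

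First, I would prove by induction on $n$ that both $|\mathrm{sk}_n f_\bullet|$ and the latching map $L_n f: L_n X \to L_n Y$ are weak $G$-equivalences. For the inductive step, in the pushout square exhibiting $|\mathrm{sk}_n X_\bullet|$, the top-right corner $\Delta^n \times X_n$ is comparable to its counterpart for $Y_\bullet$ by a weak $G$-equivalence (from $X_n \simeq Y_n$), the bottom-left by the outer inductive hypothesis, and the top-left by the inductive hypothesis on latching maps together with the equivariant gluing lemma applied to its defining iterated pushout. Applying the equivariant gluing lemma once more to the skeletal pushout then yields that $|\mathrm{sk}_n f_\bullet|$ is a weak $G$-equivalence.

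Finally, I would pass to the sequential colimit. Since the skeletal inclusions are $G$-cofibrations, the map $|f_\bullet|$ is a sequential colimit of weak $G$-equivalences along $G$-cofibrations, which is itself a weak $G$-equivalence. The main obstacle is the equivariant gluing lemma and the analogous statement for sequential colimits; both reduce to a fixed-point analysis, using that for each subgroup $H \subseteq G$ the functor $(-)^H$ commutes with pushouts along closed $G$-cofibrations and with sequential colimits of closed $G$-cofibrations, so that the equivariant statements follow from the classical non-equivariant gluing lemma and colimit-preservation lemma in $\mathrm{Top}$ applied subgroup by subgroup. One small technicality I would flag is the care needed for the latching objects to admit a compatible Reedy structure simultaneously for $X$, $Y$, and the comparison map, so that the bi-induction on $|\mathrm{sk}_n f|$ and $L_n f$ closes correctly.
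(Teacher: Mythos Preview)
The paper does not prove this theorem; it is quoted from \cite{MMO} and used as a black box. Your skeletal-induction outline is the standard argument and is essentially correct. One point to flag: the paper's operative definition of Reedy cofibrant (the very next item, citing \cite[Lemma~1.9]{MMO}) is that each degeneracy $s_i$ is a $G$-cofibration, not that the latching maps $L_nX \to X_n$ are. Your pushout-product step assumes the latter. The passage from ``all $s_i$ are $G$-cofibrations'' to ``all latching maps are $G$-cofibrations'' is itself a nontrivial lemma (one proves it by induction using Lillig's union theorem for closed $G$-cofibrations), and is part of what \cite{MMO} supplies. With that bridge in place, your argument goes through.
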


\subsection{Cofibrancy}
We take care of the cofibrancy issues in this part, following details in \cite{MayGILS}. 
We first show that some functors preserve $G$-cofibrations. 
One who is willing to take it as a blackbox may skip directly to \autoref{lem:Reedysi}.
We uses NDR data, which give a hands-on way to handle cofibrations.
\begin{defn}[Definition A.1 of \cite{MayGILS}]
  \label{defn:NDR}
  A pair $(X,A)$ of $G$-spaces with $A \subset X$ is an NDR pair if there exists a $G$-invariant map
  $u : X \to I  = [0,1]$ such
  that $A = u^{-1}(0)$ and a homotopy given by a map $h: I \to \mathrm{Map}_G(X,X)$ satisfying
\begin{itemize}
\item $h_0(x) = x $ for all $x \in X$;
\item  $h_t(a) = a$ for all $t \in I$ and $a \in A$;
\item $h_1(x) \in A$ for all $x \in  u^{-1}[0,1)$. 
\end{itemize} The pair $(h,u)$ is said to a representation of $(X,A)$ as an NDR pair.
A pair $(X,A)$ of based $G$-spaces is an NDR pair if it is an NDR pair of $G$-spaces with the $h_t$
being based maps for all $t \in I$.
\end{defn}

An NDR pair gives a $G$-cofibration $A \to X$.
The function $u$ gives an open neighboorhood $U$ of $A$ by taking $U=u^{-1}[0,1)$.
The function $h$ restricts on $I \times U$ to a neighborhood deformation retract
of $A$ in $X$.

We have the following lemma by elaborating the NDR data.
Its proof is tedious and omitted here (See \cite[Section 6.4]{mythesis}).
\begin{lem}
  \label{lem:NDR}
  Any functor $F$ associated to  $\mathscr{F} \in \Lambda^{op}_{*}[\Gtop]$, in
  particular both $\mathrm{D}^{\mathrm{fr}_{V}}_V$ and
  $\mathrm{D}^{\mathrm{fr}_{V}}_{\myM}$, sends NDR pairs to NDR pairs.
  The functors  $\mathrm{Map}_c (M,-)$, $\mathrm{Map}_{*}(M^+,-)$ and 
 $\Sigma^V$ all send NDR pairs to NDR pairs.
\end{lem}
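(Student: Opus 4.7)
The approach is to unpack Definition \autoref{defn:admissible} and, for each listed functor, produce explicit NDR data $(Fh, Fu)$ on $(FX, FA)$ starting from a $G$-equivariant representation $(h, u)$ of $(X, A)$.

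For the functor $F$ associated to a unital $\Lambda$-sequence $\mathscr{F} \in \Lambda^{op}_*(\Gtop)$, recall from \autoref{defn:functor} that $FX = \coprod_k \mathscr{F}(k) \times_{\Sigma_k} X^k / \sim$. Condition \autoref{item:Fh} forces $(Fh)_t[c; x_1, \ldots, x_k] = [c; h_t(x_1), \ldots, h_t(x_k)]$. For the neighborhood data I take $Fu[c; x_1, \ldots, x_k] = \max_{1 \leq i \leq k} u(x_i)$, with the convention that the maximum over the empty index set is $0$. The key step is checking well-definedness on the quotient: the $\Sigma_k$-action permutes the inputs (preserving max), and the base point identification $\sim$ forgets an $x_i$ equal to the base point (also preserving max, since $u(*)=0$ because $*\in A$). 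The $G$-invariance of $Fu$ and equivariance of $Fh$ follow from those of $u$ and $h$. The identities $(Fu)^{-1}(0)=FA$ and $(Fu)^{-1}[0,1)\subset (Fh)_1^{-1}(FA)$, together with the robustness condition \autoref{item:robust}, are then immediate from the pointwise properties of $(h,u)$. Since $\mathrm{D}^{\mathrm{fr}_V}_V$ and $\mathrm{D}^{\mathrm{fr}_V}_{\myM}$ are the associated functors of the $\Lambda$-sequences $\mathscr{D}^{\mathrm{fr}_V}_V$ and $\mathscr{D}^{\mathrm{fr}_V}_{\myM}$, their admissibility is a special case.

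For $F=\mathrm{Map}_c(M,-)$, I take $(Fh)_t(f)=h_t\circ f$ and $Fu(f)=\sup_{m\in M}u(f(m))$. Because $u(*)=0$ and $f$ has compact support, $u\circ f$ vanishes outside a compact set in $M$, so the supremum is in fact a maximum attained on that compact set; this is the only nontrivial point, needed for continuity of $Fu$ in the compact-open topology. The remaining NDR conditions, $G$-invariance, and robustness are verified as in the previous case. The claim for $\mathrm{Map}_*(M^+,-)$ follows by the same argument via the natural homeomorphism $\mathrm{Map}_c(M,X)\cong \mathrm{Map}_*(M^+,X)$.

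Finally, for $\Sigma^V X=X\wedge S^V$, pick once and for all an NDR representation $(v_0,h_0)$ of $(S^V,\{*\})$; then set $(Fh)_t[x\wedge s]=[h_t(x)\wedge (h_0)_t(s)]$ and $Fu[x\wedge s]=u(x)\cdot v_0(s)$. The product $u(x)\cdot v_0(s)$ vanishes exactly when $x\in A$ or $s=*$, both of which yield points of $A\wedge S^V=FA$ (the second case because $[x\wedge *]$ collapses to the basepoint, which belongs to $FA$). The main obstacle across all four cases is the bookkeeping required to show well-definedness on the relevant quotient --- the $\Sigma_k$-orbits and base point identification for the $\Lambda$-functor case, and the smash-product collapses for $\Sigma^V$; once these are dispatched, the remaining NDR conditions transcribe pointwise from those of $(h,u)$.
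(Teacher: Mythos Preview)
Your treatment of the first three classes of functors---the $\Lambda$-associated functors, $\mathrm{Map}_c(M,-)$, and $\mathrm{Map}_*(M^+,-)$---is correct and essentially identical to the paper's: the same $\max$/$\sup$ formula for $Fu$, the same verifications.

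Your argument for $\Sigma^V$, however, has a genuine gap. Your proposed retraction $(Fh)_t[x\wedge s]=[h_t(x)\wedge (h_0)_t(s)]$ does \emph{not} fix $FA=\Sigma^V A$ pointwise: for $a\in A$ with $a\neq *$ and $s\neq *$, you get $(Fh)_t[a\wedge s]=[a\wedge (h_0)_t(s)]$, and $(h_0)_t$ is a deformation of $S^V$ onto $\{*\}$, so $(h_0)_t(s)\neq s$ in general. Thus $(Fh,Fu)$ is not an NDR representation of $(\Sigma^V X,\Sigma^V A)$. The naive fix $(Fh)_t[x\wedge s]=[h_t(x)\wedge s]$ does restore this condition, but then $(Fh)_1$ fails to carry $(Fu)^{-1}[0,1)$ into $FA$: take $u(x)=1$ and $v_0(s)$ small. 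The paper in fact observes that $\Sigma^V$ \emph{cannot} be admissible in the sense of \autoref{defn:admissible} for exactly this reason, and instead appeals to the based product-of-NDR-pairs formula (a based version of \cite[Lemma~A.3]{MayGILS}), whose retraction interpolates the time parameter according to the relative sizes of $u(x)$ and $v_0(s)$; your simpler product formula omits this interpolation and that is precisely what breaks.
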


\begin{defn}[Lemma 1.9 of \cite{MMO}]
  \label{lem:Reedysi}
A simplicial $G$-space $X_{\bullet}$ is Reedy cofibrant if all degeneracy operators $s_i$ are $G$-cofibrations.
\end{defn}

The following lemma shows that monadic bar constructions are Reedy cofibrant.
\begin{lem}[adaptation of Proposition A.10 of \cite{MayGILS}]
  \label{lem:GILSA10}
  Let $\mathscr{C}$ be a reduced operad in $G$-spaces such that the unit map $\eta: * \to
  \mathscr{C}(1)$ gives a non-degenerate base point.
  Let $C$ be the reduced monad associated to $\mathscr{C}$. Let $A$ be a $C$-algebra in $\Gtop_{*}$ and
  $F: \Gtop_{*} \to \Gtop_{*}$ be a right-$C$-module functor.
  Suppose that $F$ sends NDR pairs to NDR pairs. Then
  $B_{\bullet}(F,C,A)$ is Reedy cofibrant.
\end{lem}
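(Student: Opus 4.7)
The plan is to verify that each degeneracy operator $s_i \colon B_q(F,C,A) \to B_{q+1}(F,C,A)$ is a $G$-cofibration, which yields Reedy cofibrancy by the criterion of \autoref{lem:Reedysi}.

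First I would unwind the degeneracies explicitly. Writing $B_q(F,C,A) = FC^q A$, the $i$-th degeneracy for $0 \le i \le q$ is obtained by applying the unit $\eta\colon \mathrm{id} \to C$ in the $i$-th slot, so
\begin{equation*}
s_i = FC^i(\eta_{C^{q-i}A}) \colon FC^q A \to FC^{q+1}A.
\end{equation*}
Thus it suffices to establish two things: (a) that $\eta_X \colon X \to CX$ is a $G$-cofibration (more precisely, an NDR inclusion) for every $X \in \Gtop_*$, and (b) that $FC^i$ preserves such $G$-cofibrations.

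Claim (b) is essentially already in hand. By \autoref{lem:NDR} the monad $C$ is admissible, so $C$ sends NDR pairs to NDR pairs and iterated composition gives the same for $C^i$. The hypothesis on $F$ provides the final step. Composing, $FC^i$ sends NDR pairs to NDR pairs, hence $FC^i(\eta_{C^{q-i}A})$ is a $G$-cofibration provided $\eta_{C^{q-i}A}$ is one.

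The substantive step is claim (a). Given NDR representation $(h^{\mathscr{C}}, u^{\mathscr{C}})$ of the pair $(\mathscr{C}(1), \{\eta\})$, which exists by the non-degenerate basepoint assumption, I would build NDR data $(h^{CX}, u^{CX})$ for $(CX, X)$ by performing the deformation only in the $\mathscr{C}(1)$-coordinate on the length-one stratum and extending by the constant value $1$ elsewhere. Concretely, on a representative $[c; x_1, \dots, x_k] \in \mathscr{C}(k) \times_{\Sigma_k} X^k$ one sets
\begin{equation*}
u^{CX}[c; x_1, \dots, x_k] = \begin{cases} u^{\mathscr{C}}(c), & k = 1, \\ 1, & k \ne 1, \end{cases}
\qquad h_t^{CX}[c; x_1, \dots, x_k] = \begin{cases} [h_t^{\mathscr{C}}(c); x_1], & k=1, \\ [c; x_1, \dots, x_k], & k \ne 1, \end{cases}
\end{equation*}
using the identification $X \cong \{\eta\} \times X \hookrightarrow \mathscr{C}(1) \times X \hookrightarrow CX$ under $\eta_X$. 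The main obstacle is verifying that these formulas descend along the base-point identification $\sim$ of \autoref{rmk:equi-relation}: whenever some $x_j$ is the basepoint $*$ of $X$, the equivalence class equals $[(s_j^k)^* c; x_1, \dots, \widehat{x_j}, \dots, x_k]$ in filtration $k-1$, and I would need to check that $u^{CX}$ and $h_t^{CX}$ give matching values on both representatives. This is the step where the reducedness $\mathscr{C}(0) = \mathrm{pt}$ and the fact that the NDR functions are already compatible with the operadic unit play their role; once this compatibility is checked, $G$-equivariance is automatic from the $G$-equivariance of the input NDR data, the axioms in \autoref{defn:NDR} are routine to verify, and the argument specializes the non-equivariant proof of \cite[Proposition A.10]{MayGILS} to the present equivariant setting.
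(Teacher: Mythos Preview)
Your overall strategy matches the paper's: identify each degeneracy as $FC^i(\eta_X)$ with $X = C^{q-i}A$, show $(CX,X)$ is an NDR pair, then push forward by $FC^i$ using admissibility of $C$ and the hypothesis on $F$. Part (b) is fine.

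The gap is in part (a). Your explicit formulas for $(h^{CX}, u^{CX})$ do \emph{not} descend along the base-point identification, and this is not a bookkeeping issue that reducedness alone resolves. Take $k=1$ with $x_1 = *$: then $[c;*] \sim *_{CX}$ for every $c \in \mathscr{C}(1)$, yet your formula assigns $u^{CX}[c;*] = u^{\mathscr{C}}(c)$ on the $k=1$ representative and $u^{CX}(*_{CX}) = 1$ on the $k=0$ representative; these disagree (e.g.\ $u^{\mathscr{C}}(\eta)=0$). Similarly, for $k \ge 2$ with $x_j = *$ the $k$-side gives $1$ while the $(k-1)$-side gives $u^{\mathscr{C}}\big((s_j^k)^*c\big)$, and nothing forces these to coincide.

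What is missing is exactly the ingredient the paper supplies: NDR data for $(X,*)$. The paper first observes that $(A,*)$ is an NDR pair (this assumption on $A$ is stated earlier in the section, though not repeated in the lemma), then iterates the NDR-preservation of $C$ to conclude $(C^{q-i}A,*) = (X,*)$ is an NDR pair. Only with NDR data for both $(\mathscr{C}(1),\eta)$ and $(X,*)$ does May's construction in \cite[A.10]{MayGILS} produce well-defined NDR data for $(CX,X)$; the formulas there are more elaborate than yours precisely because they must interpolate across the base-point identifications. So your claim (a) should be weakened from ``every $X \in \Gtop_*$'' to ``every non-degenerately based $X$'', and the construction has to use that extra input.
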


\begin{proof}
  We need to show that for any $n \geq 0$ and $0 \leq i \leq n$,
  the degeneracy map $${s^i_n = FC^i\eta_{C^{n-i}A}: FC^nA \to  FC^{n+1}A}$$ is a $G$-cofibration.
  Write $X = C^{n-i}A$. By \autoref{lem:NDR}, $C$ sends NDR pairs to NDR pairs.
  Starting from the NDR pair $(A, *)$ and applying this functor $(n-i)$ times, we get an NDR
  pair $(C^{n-i}A, *) = (X, *)$.
  Together with the assumption that $\mathscr{C}(1)$ is
  non-degenerately based, we can show $(CX, X)$ is an
  NDR pair where $X$ is identified with the image $\eta_X: X \to CX$ (see the
  proof of \cite[A.10]{MayGILS}). Applying $C$ another $i$ times
  and then $F$, we get the NDR pair  $\big(FC^{i+1}X, FC^iX\big)=\big(FC^{n+1}A, FC^nA\big)$.
  Thus $s^i_n = FC^i\eta_X$ is a $G$-cofibration. 
\end{proof}

\begin{cor}
  \label{lem:Reedy}
  Let $M,V,A$ be as in \autoref{thm:NPDV}.
  Then the following are Reedy cofibrant simplicial $G$-spaces:
\begin{equation*}
  \mathrm{B}_{\bullet}(\mathrm{D}^{\mathrm{fr}_V}_{\myM} ,{D}^{\mathrm{fr}_V}_V, A),
  \ \mathrm{Map}_c (M, \Sigma^V   (\mathrm{D}^{\mathrm{fr}_{V}}_V)^{\bullet} A)
  \text{ and }\mathrm{Map}_{*} (M^+, \Sigma^V   (\mathrm{D}^{\mathrm{fr}_{V}}_V)^{\bullet} A). 
\end{equation*}
\end{cor}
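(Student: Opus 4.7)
The plan is to recognize each of the three simplicial $G$-spaces as a two-sided monadic bar construction of the form $\mathrm{B}_{\bullet}(F, \mathrm{D}^{\mathrm{fr}_V}_V, A)$ for a suitable right $\mathrm{D}^{\mathrm{fr}_V}_V$-module functor $F$, and then apply \autoref{lem:GILSA10} in each case. Specifically, the three choices of $F$ are $\mathrm{D}^{\mathrm{fr}_V}_{\myM}$, $\mathrm{Map}_c(M, \Sigma^V-)$, and $\mathrm{Map}_*(M^+, \Sigma^V-)$. The right module structure on the first comes from the right action of $\mathscr{D}^{\mathrm{fr}_V}_V$ on $\mathscr{D}^{\mathrm{fr}_V}_{\myM}$ encoded by composition of embeddings; the right module structure on the other two comes, as noted in \autoref{sec:scanning}, from the monad map $s\colon \mathrm{D}^{\mathrm{fr}_V}_V \to \Omega^V\Sigma^V$ followed by the counit, applied inside $\mathrm{Map}_c(M,-)$ or $\mathrm{Map}_*(M^+,-)$.

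To apply \autoref{lem:GILSA10} I need to check its two hypotheses. First, that the unit map $\mathrm{pt} \to \mathscr{D}^{\mathrm{fr}_V}_V(1)$ is a non-degenerate base point: this follows because $\mathscr{D}^{\mathrm{fr}_V}_V(1) \simeq \moore_{\phi} B$ up to the $V$-framed automorphism correction (cf.\ \autoref{cor:conf} and \autoref{cor:compareDV}), and the identity embedding together with the constant path provides a cofibrant inclusion of the basepoint. Second, that each $F$ sends NDR pairs to NDR pairs. For $F = \mathrm{D}^{\mathrm{fr}_V}_{\myM}$, this is exactly the admissibility statement of \autoref{lem:NDR}. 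For $F = \mathrm{Map}_c(M,\Sigma^V-)$, I would factor $F$ as the composition of $\Sigma^V$ (which sends NDR pairs to NDR pairs by \autoref{lem:NDR}) followed by $\mathrm{Map}_c(M,-)$ (which is admissible, hence sends NDR pairs to NDR pairs); the composition of NDR-preserving functors is again NDR-preserving. The argument for $F = \mathrm{Map}_*(M^+,\Sigma^V-)$ is identical, using the corresponding admissibility statement in \autoref{lem:NDR}.

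With both hypotheses verified, \autoref{lem:GILSA10} gives that each of the three simplicial objects has all degeneracies $s^i_n = F C^i\eta_{C^{n-i}A}$ as $G$-cofibrations, which by \autoref{lem:Reedysi} is precisely Reedy cofibrancy.

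The main obstacle, insofar as there is one, is bookkeeping around the composite functor $\mathrm{Map}_c(M, \Sigma^V-)$: strictly speaking $\Sigma^V$ is not admissible in the sense of \autoref{defn:admissible} (as noted in the proof of \autoref{lem:NDR}), only NDR-preserving. So I must be careful that the statement of \autoref{lem:GILSA10} requires only the weaker ``sends NDR pairs to NDR pairs'' hypothesis on $F$, not full admissibility, and that this weaker property is closed under composition. Given this, the corollary is a formal consequence of the lemmas already established.
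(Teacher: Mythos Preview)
Your proposal is correct and follows essentially the same approach as the paper: apply \autoref{lem:GILSA10} with $C = \mathrm{D}^{\mathrm{fr}_V}_V$ and $F$ equal to each of $\mathrm{D}^{\mathrm{fr}_V}_{\myM}$, $\mathrm{Map}_c(M,\Sigma^V-)$, $\mathrm{Map}_*(M^+,\Sigma^V-)$, invoking \autoref{lem:NDR} to verify that each $F$ sends NDR pairs to NDR pairs. Your observation that \autoref{lem:GILSA10} requires only the NDR-preserving property (not full admissibility) on $F$, and that this is closed under composition, is exactly the right point to make given that $\Sigma^V$ is not admissible; the paper leaves this implicit. Your justification of the non-degenerate basepoint of $\mathscr{D}^{\mathrm{fr}_V}_V(1)$ via $\moore_\phi B$ is more elaborate than needed---in the $V$-framed case $B=\mathrm{pt}$, so this is immediate---but the conclusion is fine.
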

\begin{proof}  
  In \autoref{lem:GILSA10}, we take $C = \mathrm{D}^{\mathrm{fr}_{V}}_V $ and  respectively
  $F =  \mathrm{D}^{\mathrm{fr}_{V}}_{\myM}$, $ F = \mathrm{Map}_c (M, \Sigma^V -)$
  or $ F = \mathrm{Map}_* (M^+, \Sigma^V -)$.
  By \autoref{lem:NDR}, each $F$ does send NDR pairs to NDR pairs.
\end{proof}

\subsection{Dimension}
\label{sec:dimension}
We start by recalling some facts about $G$-CW complexes and equivariant dimensions following \cite[I.3]{MayAlaska}.
A $G$-CW complex $X$ is a union of $G$-spaces $X^n$,
where $X^0$ is a disjoint union of orbits,
and $X^n$ is obtained by inductively gluing cells
$G/K \times D^n$ for subgroups $K \subgroup G$ via $G$-maps  along their boundaries $G/K \times S^{n-1}$
to the previous skeleton $X^{n-1}$.

We shall look at functions from the conjugacy classes of subgroups of $G$ to $\bZ_{\geq -1}$ and
typically denote such a function by $\nu$. We say that a 
$G$-CW complex $X$ has dimension $\leq \nu$ if its cells of orbit type $G/H$ all have
dimensions $\leq \nu(H)$, and that a $G$-space $X$ is $\nu$-connected if $X^H$ is $\nu(H)$-connected
for all subgroups $H\subgroup G$, that is, $\pi_k(X^H) = 0$ for $k \leq \nu(H)$.
We allow $\nu(H)=-1$ for the case $X^H=\varnothing$.

It is worth pointing out that this notion of dimension should be more appropriately called
the cell dimension.
(It is \emph{not} the dimension of $X^H$, as we explain
shortly.) It gives information on which cells to
consider in an induction.  For the purpose of induction, we use the following \emph{ad hoc}
definition in this paper:
\begin{defn}
  A based $G$-CW complex is a union of $G$-spaces $X^n$ obtained by inductively
  gluing cells to $X^0$, a disjoint union of orbits plus a disjoint base
  point $*$. (The gluing maps are non-based maps.)
 In a based map out of $X$,  the base point $*$ has no freedom but to be sent to
 the base point. So we do NOT count it as a cell for a based $G$-CW complex,
 excluding it from counting the dimension as well.
 It then makes sense to write $X^{-1} = *$. 
  This is not the same as a based $G$-CW complex in \cite[Page 18]{MayAlaska}, where the base point
  is put in the 0-skeleton $X^0$.
\end{defn}

Fix a subgroup $H \subgroup G$. A function $\nu$ from the
conjugacy classes of subgroups of $G$ to $\bZ_{\geq -1}$ induces a function from the conjugacy
classes of subgroups of $H$ to $\bZ_{\geq -1}$, which we still call $\nu$.
We have the double coset formula
\begin{equation}
\label{eq:27}
  G/K \cong \coprod_{1 \leq i \leq |H\backslash G/K|} H/K_i \text{ as $H$-sets,}
\end{equation}
where each $K_i = H \cap g_iKg_i^{-1} $ for some element $g_i \in G$. So a (based) $G$-CW structure on $X$
restricts to a (based) $H$-CW structure on the $H$-space $\mathrm{Res}^G_HX$. 
However, for $X$ of \dimension $\leq \nu$, $\mathrm{Res}^G_H X$ may not be of \dimension $\leq \nu$,
as we see in \autoref{eq:27} that an $H/K_i$-cell can come from a $G/K$-cell for a larger group $K$.
For a function $\nu$, we define the function $d_{\nu}$ to be
\begin{equation}
\label{eq:dnu}
d_{\nu}(K) = \max\limits_{K \subgroup L} \nu(L).
\end{equation}
Then $\mathrm{Res}^G_H X$ is of \dimension
$\leq d_{\nu}$.

\begin{rem}
  \label{rem:dim}
  More specifically, we define the \dimension of a (based) $G$-CW complex $X$
  to be the minimum $\nu$ such that $X$ is of \dimension $\leq \nu$.
  Suppose that $X$ has \dimension $\nu$.
  From \autoref{eq:27}, we get:
\begin{enumerate}[(i)]
\item The (based) $H$-CW complex $\mathrm{Res}^G_H X$ has \dimension $\nu_H$, where 
\begin{equation*}
\nu_{H}(K) = \max\limits_{\substack{K \subgroup L \\ K = L \cap H}} \nu(L).
\end{equation*}
We have $\nu_H(K) \leq d_{\nu}(K)$, and it can be strictly less. (For a trivial example, take $H = G$.)
\item The (based) CW-complex $X^H$ has dimension $\nu_H(H) = d_{\nu}(H) \geq
  \nu(H)$.
  (In the based case, we also exclude the base point from counting the dimension
  of $X^H$, so that if $X^H=*$, the dimension of $X^H$ is -1.) 
\end{enumerate}
\end{rem}
\begin{defn}
\begin{enumerate}
\item For a (based) $G$-CW complex $X$ of \dimension $\nu$,  $\mathrm{dim}(X)$ is
  the function $d_{\nu}$. 
\item For a $G$-representation $V$, $\mathrm{dim}(V)$ is the function $\mathrm{dim}(V)(H)= \mathrm{dim}(V^H)$.
\end{enumerate}
\end{defn}
From \autoref{rem:dim}, we have two observations: First, $\mathrm{dim}(X)(H)$ is equal to
the dimension of the CW-complex $X^H$. So $\mathrm{dim}(X)$ is independent of
the $G$-CW decomposition of the underlying $G$-space of $X$. Second, for a unbased $G$-CW complex $X$,
the based $G$-CW complex $X_+ = X \amalg {*}$ satisfies
$\mathrm{dim}(X_+) = \mathrm{dim}(X)$ because  $*$ is excluded
from cells in the based case.

\medskip
We prepare the following results regarding dimension for the next subsection.
\begin{thm}[Theorem 3.6 of \cite{Illman}]
\label{lem:Illman}
For a smooth $G$-manifold $M$ and a closed smooth $G$-submanifold $N$, there exists a smooth
$G$-equivariant triangulation of $(M,N)$. 
\end{thm}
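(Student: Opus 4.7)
The plan is to follow the classical orbit-type induction strategy, reducing the global statement to a local model via the slice theorem and then patching. The central facts powering the argument are: (i) the equivariant slice theorem, which supplies, near each orbit $Gx$ with isotropy $H = G_x$, an equivariant tubular neighborhood $G \times_H S_x$ for some orthogonal $H$-representation $S_x$; (ii) the finiteness of orbit types on any compact $G$-invariant subset when $G$ is a compact Lie group; and (iii) standard non-equivariant smooth triangulation theory by Whitehead and Whitney, which will be applied stratum by stratum.

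The first step is to stratify $M$ by orbit type. The subconjugacy partial order on conjugacy classes of subgroups $(H)$ orders the strata $M_{(H)} \subgroup M$ so that the closure of $M_{(H)}$ lies in the union $\bigcup_{(K) \leq (H)} M_{(K)}$. I would then build the triangulation by induction on this poset, starting from the deepest (largest isotropy) strata. For each stratum, the quotient $M_{(H)}/G$ is a smooth manifold, so I can triangulate it non-equivariantly by Whitehead's theorem; pulling back along the principal $(N_GH/H)$-bundle $M_{(H)} \to M_{(H)}/G$ gives a $G$-equivariant simplicial structure on $M_{(H)}$.

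Second, I extend across strata using the slice theorem. For each orbit $Gx$ with $G_x = H$, the local model $G \times_H S_x$ allows me to reduce to the problem of triangulating the $H$-representation $S_x$ equivariantly, with $0$ as a vertex. This itself is done by induction on the dimension of the slice: triangulate $S(S_x)$ equivariantly (it has strictly smaller principal-orbit dimension) and cone to the origin. The patching across the boundary between adjacent strata is done by equivariant subdivision, using the collar structure supplied by the slice/tubular neighborhood to interpolate between the already-constructed triangulation on the deeper stratum and the cone triangulation on the slice; an equivariant partition of unity subordinate to a cover by tubular neighborhoods lets one combine these compatibly on overlaps.

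For the relative version with $N \subgroup M$ closed, I would run the same induction on $N$ first to obtain an equivariant triangulation of $N$, then extend across the equivariant normal bundle of $N$ in $M$ (obtained via an equivariant tubular neighborhood theorem) by coning and then continuing the induction on the remaining strata of $M$. The main obstacle throughout is the patching step: ensuring that equivariant subdivisions on overlapping slice neighborhoods agree after a common refinement, while respecting $G$-action and keeping the simplicial maps smooth in each coordinate chart. This is exactly the technical heart of Illman's original argument, and it is where one must invest the detailed bookkeeping; the rest of the proof follows the outline above in a relatively formal manner.
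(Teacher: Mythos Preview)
The paper does not prove this statement at all: it is quoted verbatim as Theorem~3.6 of Illman's paper and used as a black box in the proof of Lemma~\ref{lem:NPDV}. So there is nothing in the paper to compare your argument against.

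That said, your outline is a reasonable high-level sketch of the orbit-type induction that underlies Illman's original proof. A few cautions if you ever want to fill it in. First, the quotient $M_{(H)}/G$ is in general a manifold with corners rather than a smooth manifold, so invoking Whitehead's theorem directly is not quite right; one must work with the smooth stratified structure more carefully. Second, the patching step you flag as ``the main obstacle'' is indeed where almost all of the work lives, and your description (equivariant subdivision plus partitions of unity) underspecifies what is needed: one has to produce compatible \emph{smooth} triangulations across strata boundaries, and the slice neighborhoods do not themselves provide a global simplicial structure without a delicate inductive refinement argument. Third, for the relative case, triangulating $N$ first and then extending across a tubular neighborhood is the right idea, but again the compatibility of the extension with the already-built triangulation on deeper strata of $M$ requires the same careful bookkeeping. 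None of this is wrong in spirit, but the sketch as written would not pass as a proof without substantial expansion---which is presumably why the paper simply cites Illman rather than reproving it.
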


\begin{lem}
\label{lem:NPDV}
  Let $M$ be a $V$-framed manifold and $A$ be a $G$-space, then 
\begin{enumerate}
\item\label{item:NPDV1} $M^{+}$ has the homotopy type of a $G$-CW complex of \dimension $\leq \mathrm{dim}(V)$.
\item\label{item:NPDV2} $K_{n} = \Sigma^V  (\mathrm{D}^{\mathrm{fr}_{V}}_V)
  ^{n} A$ is $(\mathrm{dim}(V)-1)$-connected. If furthermore $A$ is
  $G$-connected, then $K_n$ is $\mathrm{dim}(V)$-connected.
\end{enumerate}
\end{lem}
\begin{proof}
  \autoref{item:NPDV1}
  Since $M$ is a $V$-framed, the exponential maps give local coordinate charts of $M^H$ as a
  (possibly empty) manifold of dimension $\mathrm{dim}(V^H)$.
  If $M$ is compact we take $W = M$, otherwise we take a manifold $W$ with boundary such that $M$
  is diffeomorphic to the interior of $W$.
  By \autoref{lem:Illman}, $(W,\partial W) $ has a
  $G$-equivariant triangulation. It gives a relative $G$-CW structure on $(W,\partial W)$ with relative
  cells of type $G/H$ of dimension $\leq \mathrm{dim}(V^{H})$. The quotient $W/\partial
  W$ gives the desired $G$-CW model for $M^+$.

  \autoref{item:NPDV2}
  For any subgroup $H\subgroup G$, we have
  $K_n^H = (\Sigma^V  (\mathrm{D}^{\mathrm{fr}_{V}}_V) ^{n} A)^H = \Sigma^{V^{H}} ((\mathrm{D}^{\mathrm{fr}_{V}}_V)
  ^{n} A)^{H}$.
  Then $(K_n)^H$ is obviously $(\mathrm{dim}(V^H)-1)$-connected.
  When $A$ is $G$-connected,
  by \autoref{lem:Gconnected}, $((\mathrm{D}^{\mathrm{fr}_{V}}_V)^{n} A)^{H}$ is
  connected, so that
  $K_n^H$ is $\mathrm{dim}(V^H)$-connected.
\end{proof}

\subsection{Commuting mapping space and geometric realization}
Let $X$ be a based $G$-CW complex and $K_{\bullet}$ be a simplicial $G$-space. Then the
levelwise evaluation is a $G$-map
\begin{equation*}
|\mathrm{Map}_{*}(X, K_{\bullet})| \sm X \cong |\mathrm{Map}_{*}(X, K_{\bullet}) \sm X| \to |K_{\bullet}|,
\end{equation*}
whose adjoint gives a $G$-map
\begin{equation}
  \label{eq:MapToSset}
\zeta: |\mathrm{Map}_{*}(X, K_{\bullet})| \to \mathrm{Map}_{*}(X, |K_{\bullet}|).
\end{equation}
Non-equivariantly, it is one of the key steps in May's recognition principal that
\autoref{eq:MapToSset} is a weak equivalence when each $K_{\bullet}$ is
$\mathrm{dim}(X)$-connected \cite[Theorem 12.3]{MayGILS}. The goal of this subsection is to give a sufficent condition for 
$\zeta$ to be a weak $G$-equivalence.


The strategy is to induce on cells. 
However, the geometric realization of a levelwise fibration is not necessarily a fibration. 
Dold--Thom came up with the notion of quasi-fibrations, which is good enough for handling the homotopy groups.
\begin{defn}
  A map $p: Y \to W$ of spaces is a quasi-fibration if $p$ is onto and it induces an isomorphism on
  homotopy groups  $\pi_{*}(Y,p^{-1}(w), y) \rightarrow \pi_{*}(W,w)$ for all $w \in W$ and $y \in
  p^{-1}(w)$. In other words, there is a long exact sequence on homotopy groups of the sequence
  $p^{-1}(w) \to Y \to W$ for any $w \in W$.
\end{defn}

\begin{thm}(\cite[Theorem 12.7]{MayGILS}) \label{thm:quasifib}
  Let $p: E_{\bullet} \to B_{\bullet}$ be a levelwise Hurewicz fibration of pointed simplicial
  spaces such that $B_{\bullet}$ is Reedy cofibrant and $B_n$ is connected for all $n$.
  Set  $F_{\bullet} = p^{-1}(*)$.
  Then the realization  $|E_{\bullet}| \to |B_{\bullet}|$ is a quasi-fibration with fiber
  $|F_{\bullet}|$. 
\end{thm}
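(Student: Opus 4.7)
The plan is to apply Dold--Thom's criterion for quasi-fibrations inductively along the skeletal filtration of $|B_{\bullet}|$. Write $B_{(n)} = |\mathrm{sk}_n B_{\bullet}|$ for the $n$-th skeleton, with $B_{(-1)} = \varnothing$ and $|B_{\bullet}| = \mathrm{colim}_n B_{(n)}$. Reedy cofibrancy of $B_{\bullet}$ guarantees that each inclusion $B_{(n-1)} \hookrightarrow B_{(n)}$ is a closed cofibration, and $B_{(n)}$ is built from $B_{(n-1)}$ by a pushout attaching $n$-cells of the form $\Delta^n \times N_n B$ along $\partial\Delta^n \times N_n B$ for $N_n B$ the non-degenerate part of $B_n$. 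I would denote by $E_{(n)} = p^{-1}(B_{(n)})$ the corresponding filtration of $|E_{\bullet}|$. The goal is to prove by induction on $n$ that $E_{(n)} \to B_{(n)}$ is a quasi-fibration with fiber $|F_{\bullet}|$; the theorem then follows by passing to the colimit, using that inclusions along cofibrations preserve the quasi-fibration property (another application of Dold--Thom).

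For the base case $n=0$, $B_{(0)} = B_0$ and $E_{(0)} = E_0$, so the map is $p_0$ itself, which is a Hurewicz fibration by hypothesis, and in particular a quasi-fibration with fiber $F_0 = |F_{\bullet}|^{(0)}$. The inclusion $F_0 \hookrightarrow |F_{\bullet}|$ is a weak equivalence because $F_{\bullet}$ inherits Reedy cofibrancy (its degeneracies are restrictions of those in $E_{\bullet}$, which are cofibrations by the same argument as \autoref{lem:GILSA10}) together with the connectedness-transfer across cells, which is exactly where the hypothesis that each $B_n$ is connected enters.

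For the inductive step, I would cover $B_{(n)}$ by two open sets: $U$, a tubular neighborhood of $B_{(n-1)}$ obtained from the NDR structure of the cofibration $B_{(n-1)} \hookrightarrow B_{(n)}$, which deformation retracts onto $B_{(n-1)}$; and $V$, the open complement of $B_{(n-1)}$ in $B_{(n)}$, namely the union of the open interiors of the attached $n$-cells. Over $V$, the map $p$ restricts fibrewise on each open cell to the pullback of $p_n : E_n \to B_n$ along a continuous map from an open disk; since $p_n$ is a Hurewicz fibration, so is its pullback, and over the contractible cell interiors the fibers are canonically identified with fibers of $p_n$, which are weakly equivalent to $|F_{\bullet}|$ by the connectedness of $B_n$ (so that the $\pi_0$-action is trivial on components). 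Over $U$, the deformation retract lifts through the quasi-fibration $E_{(n-1)} \to B_{(n-1)}$ (again using NDR data to construct a strict lift on a smaller neighborhood) and shows $p|_U$ is a quasi-fibration with fiber $|F_{\bullet}|$ by inductive hypothesis. The intersection $U \cap V$ deformation retracts onto a disjoint union of punctured cells, i.e. onto $\partial\Delta^n \times N_nB$ sitting in $B_{(n-1)}$, where the quasi-fibration structure is again inherited.

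The main obstacle will be verifying the hypothesis of Dold--Thom's patching lemma that, for each $w \in B_{(n)}$ and each of the open sets containing $w$, the inclusion of the actual fiber into the restriction is a weak equivalence with canonically equivalent fiber $|F_{\bullet}|$. Over $V$ this requires trivializing $p_n$ on each cell up to homotopy, which uses that $p_n$ is a Hurewicz fibration with connected base $B_n$; over $U$ it requires that the NDR retraction $U \to B_{(n-1)}$ lifts to $p^{-1}(U) \to E_{(n-1)}$, which uses that the cofibration $E_{(n-1)} \hookrightarrow E_{(n)}$ is a quasi-fibration analogue (with the homotopy lifting being weak, not strict). Once these are checked, Dold--Thom's patching gives that $E_{(n)} \to B_{(n)}$ is a quasi-fibration with fiber $|F_{\bullet}|$, completing the induction.
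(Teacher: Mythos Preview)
The paper does not give its own proof of this theorem: it is quoted directly from \cite[Theorem 12.7]{MayGILS} and used as a black box in the proof of \autoref{thm:HM}. So there is nothing in the paper to compare your argument against. Your outline is essentially May's original argument in \cite{MayGILS}: skeletal filtration of $|B_\bullet|$ combined with the Dold--Thom patching criterion for quasi-fibrations.

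One correction to your base case: you do not need, and should not claim, that $F_0 \hookrightarrow |F_\bullet|$ is a weak equivalence. That is not true in general and the justification you sketch (``connectedness-transfer across cells'') does not establish it. What is true, and all that is required, is that the actual fiber of $|p|$ over the basepoint $* \in |B_\bullet|$ equals $|F_\bullet|$ on the nose, since geometric realization commutes with taking preimages of a simplicial point. The induction only needs to show that $E_{(n)} \to B_{(n)}$ is a quasi-fibration; the identification of the fiber at the end is a separate, trivial observation. In the inductive step the connectedness of $B_n$ is used precisely where you say: it guarantees that all fibers of the Hurewicz fibration $p_n$ are homotopy equivalent, so that the Dold--Thom hypothesis on fiber comparison maps across the overlap $U \cap V$ is satisfied.
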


\begin{thm}
  \label{thm:HM} Let $G$ be a finite group.
  If $X$ is a finite-dimensional based $G$-CW complex and $K_{\bullet}$ is a simplicial $G$-space such that for any $n$, $K_n$ is $\mathrm{dim}(X)$-connected, then the natural map \autoref{eq:MapToSset}
\begin{equation*}
\zeta: |\mathrm{Map}_{*}(X, K_{\bullet})| \to \mathrm{Map}_{*}(X, |K_{\bullet}|)
\end{equation*}
is a weak $G$-equivalence.
\end{thm}
\begin{proof} Suppose that $X$ is  of \dimension $\nu$, so $\mathrm{dim}(X) =
  d_{\nu}$. (See \autoref{eq:dnu} for $d_{\nu}$.)
Let  $* = X^{-1} \subset X^0 \subset X^1 \subset \cdots \subset X^{d_\nu(e)} = X$ be the $G$-CW
skeleton of $X$.
We use induction on $k$ to show that 
\begin{enumerate}[(i)]
\item \label{item:induct2}$\mathrm{Map}_{*}(X^k, K_n)^H$ is connected for all $n$ and $H \subgroup G$.
\item \label{item:induct1} $|\mathrm{Map}_{*}(X^k, K_{\bullet})|^H \to \mathrm{Map}_{*}(X^k, |K_{\bullet}|)^H$ is a
weak equivalence for all $H \subgroup G$;
\end{enumerate}

The base case $k=-1$ is obvious. Suppose that \autoref{item:induct2} and
\autoref{item:induct1} hold for $k$. Take the cofiber sequence
$$X^{k} \to X^{k+1} \to X^{k+1}/X^{k}$$
and map it into $K_{\bullet}$. We then apply 
\autoref{eq:MapToSset} and get the following commutative diagram:
\begin{equation}
  \label{equ:quasifib}
  \begin{tikzcd}
    \vert \mathrm{Map}_{*}(X^{k+1}/X^k, K_{\bullet})\vert ^H  \ar[d] \ar[r] & \vert \mathrm{Map}_{*}(X^{k+1}, K_{\bullet})\vert ^H  \ar[d] \ar[r] &
    \vert \mathrm{Map}_{*}(X^k, K_{\bullet})\vert ^H  \ar[d] \\
    \mathrm{Map}_{*}(X^{k+1}/X^k, |K_{\bullet}|)^H \ar[r] & \mathrm{Map}_{*}(X^{k+1}, |K_{\bullet}|)^H \ar[r]
    &\mathrm{Map}_{*}(X^k, |K_{\bullet}|)^H
  \end{tikzcd}
\end{equation}
Since maps out of a cofiber sequence form a fiber sequence, we have a fiber sequence in the second row
and a realization of the following levelwise fiber sequence in the first row:
\begin{equation}
  \label{eq:quasifib2}
  \begin{tikzcd}
     \mathrm{Map}_{*}(X^{k+1}/X^k, K_{\bullet}) ^H   \ar[r] &  \mathrm{Map}_{*}(X^{k+1}, K_{\bullet}) ^H   \ar[r] &
     \mathrm{Map}_{*}(X^k, K_{\bullet}) ^H 
  \end{tikzcd}
\end{equation}
By the inductive hypothesis~\autoref{item:induct2} and \autoref{thm:quasifib}, it realizes to a
quasi-fibration.

We first show the inductive case of \autoref{item:induct2}. We can write
  $$X^{k+1}/X^k =  \vee_i (G/K_i)_+ \sm S^{k+1},$$
where each $K_i$ is a subgroup of $G$. When $K_i$ is presented, $\nu(K_i) \geq k+1$.
From \autoref{eq:27}, we can further write $X^{k+1}/X^k \cong \vee_i \vee_j
(H/K_{i,j})_+ \sm S^{k+1}$ as spaces with $H$-action, where each
$K_{i,j}$ is $G$-conjugate to a subgroup of $K_i$. Then $d_{\nu}(K_{i,j}) \geq \nu(K_i) \geq k+1$, and the following space is connected by assumption:
\begin{equation*}
\mathrm{Map}_{*}(X^{k+1}/X^k, K_{n}) ^H  = \prod_i \mathrm{Map}_{*}(S^{k+1}, K_n^{K_{i,j}}).
\end{equation*}
This space is the fiber in \autoref{eq:quasifib2}. The connectedness of the base
space given by ~\autoref{item:induct2} then implies the connectedness of the total space.

We next show the inductive case of \autoref{item:induct1}.
Commuting geometric realization with finite products and with fixed points, the left vertical map  of
\autoref{equ:quasifib} is a product of maps
\begin{equation*}
|\mathrm{Map}_{*}(S^{k+1}, K_{\bullet}^{K_{i,j}})| \to \mathrm{Map}_{*}(S^{k+1}, |K_{\bullet}^{K_{i,j}}|).
\end{equation*}
Since we have $d_{\nu}(K_{i,j}) \geq k+1$, these maps are weak equivalences by \cite[Theorem 12.3]{MayGILS}.
By~\autoref{item:induct1}, the right vertical map
is a weak equivalence. Comparing the long exact sequences of homotopy groups, this implies that the
middle vertical map is also a weak equivalence.
\end{proof}

\begin{rem}
 Non-equivariantly, Miller  \cite[Cor 2.22]{Miller} observed that the theorem is also true
 if $K_n$ is only $(\mathrm{dim}(X)-1)$-connected for all $n$, since the only
 thing that fails in the proof is the claim \autoref{item:induct2} for $k=\mathrm{dim}(X^e)$.
 Equivariantly, one needs  \autoref{item:induct2} to
 hold for all inductive steps of $k<d_\nu(e)$. So we can only relax the
 assumption to the following extent:
 If $K_n^H$ is  $\min\{d_\nu(H), d_\nu(e)-1\}$-connected for all $n$ and $H$, then the natural map \autoref{eq:MapToSset}
is a weak $G$-equivalence. This is an improvement only when $d_{\nu}(H) =
d_{\nu}(e)$, that is $d_\nu(H) \geq \nu(K)$ for all $K \subset H$.
\end{rem}
Nevertheless, when $X = \Sigma Z$ and $Z$ is of \dimension $\nu$, so that $X$ is
of \dimension $\nu+1$, we can relax the assumption further.
\begin{cor}
  \label{cor:connective}
  If $Z$ is a finite-dimensional based $G$-CW complex and $K_{\bullet}$ is a simplicial $G$-space such that for any $n$, $K_n$ is $\mathrm{dim}(Z)$-connected, then the natural map \autoref{eq:MapToSset}
\begin{equation*}
\zeta: |\mathrm{Map}_{*}(\Sigma Z, K_{\bullet})| \to \mathrm{Map}_{*}(\Sigma Z, |K_{\bullet}|)
\end{equation*}
is a weak $G$-equivalence.
\end{cor}
\begin{proof}
  The cofiber sequence $S^0 \vee S^0 \to S^0 \to S^1 $ gives a levelwise fiber sequence
  \begin{equation}
    \label{eq:21}
  \begin{tikzcd}
     \mathrm{Map}_{*}(\Sigma Z, K_{\bullet})  \ar[r] &  \mathrm{Map}_{*}(Z, K_{\bullet}) \ar[r] &
    \mathrm{Map}_{*}(Z,K_{\bullet}) \times \mathrm{Map}_{*}(Z,K_{\bullet}) .
  \end{tikzcd}
\end{equation}
By \autoref{thm:HM} and its proof, \autoref{eq:21} has a $G$-connected base and
realizes to a quasi-fibration; the same method will show the claim.
\end{proof}

The unbased version of \autoref{thm:HM} is due to Hauschild and written down
by Costenoble--Waner \cite[Lemma 5.4]{CW91}, stated as:
\begin{thm} \label{thm:CW}
   Let $G$ be a finite group.
  If $Y$ is a finite unbased $G$-CW complex and $K_{\bullet}$ is a simplicial
  $G$-space such that for any $n$, $K_n$ is
  $\mathrm{dim}(Y)$-connected, then the natural map
\begin{equation*}
|\mathrm{Map}(Y, K_{\bullet})| \to \mathrm{Map}(Y, |K_{\bullet}|)
\end{equation*}
is a weak $G$-equivalence. 
\end{thm}
\noindent
\autoref{thm:HM} improves \autoref{thm:CW} slightly in the case when $X^G = *$.
On one hand, taking $X$ in \autoref{thm:CW} to be $Y \amalg \{*\}$ recovers \autoref{thm:HM}.
On the other hand, for a based $G$-CW complex $X$ we have the levelwise
fibration sequence
$$\mathrm{Map}_{*}(X, K_{\bullet}) \to \mathrm{Map}(X, K_{\bullet}) \to K_{\bullet}.$$
If the \dimension of $X$ satisfies $\nu(H) \geq 0$ for all $H$, then
$\mathrm{dim}(X)(H) = d_{\nu}(H) \geq 0$. The assumptions
imply that $K_n$ is $G$-connected, we can use the quasi-fibration
technique to deduce \autoref{thm:HM} from \autoref{thm:CW} (with $Y = X$). But there are also cases when the assumption in
 \autoref{thm:HM} is weaker, for example, when $X = (G/H)_+ \sm S^n$
 for some $H \neq G$. In this case, $d_{\nu}(G) = \mathrm{dim}(X^G) = -1$, so
 the $K_n^G$ are required to be connected in \autoref{thm:CW} but not in \autoref{thm:HM}.

 \subsection{Group completion}
 \label{sec:group-completion}
Recall that an $\mathrm{E}_n$-structure on a $G$-space is an algebra structure
over the little disk operad $\mathscr{D}_n$ for the trivial representation
$\mathbb{R}^n$. As pointed out in \cite[Section 1.2]{GMPermG}, there are two notions of group completion, one topological, one
computational, which we recall now.

\begin{defn} \label{defn:weak-group-cplt}
  Let $C$ and $D$ be $\mathrm{E}_1$-$G$-spaces.
  An $\mathrm{E}_1$-$G$-map $f: C \to D$ is called a weak group completion if for any subgroup $H\subgroup G$,
  there is a homotopy equivalence $\Omega\mathrm{B}(C^H) \simeq D^H$ and $f^H$ is homotopic to
  $C^H \to \Omega\mathrm{B}(C^H) \simeq D^H$.
\end{defn}
\noindent When $C$ is an $\mathrm{E}_1$-$G$-space and $H \subgroup G$, the fixed point space $C^H$ is an
$\mathrm{E}_1$-space; so $f^H$ is up to homotopy a weak group completion of $C^H$.
\begin{defn} \label{defn:grp-complete} Let $C$ and $D$ be $\mathrm{E}_2$-$G$-spaces\footnote{This definition makes sense for homotopy associative and commutative
$G$-monoids, for which $\mathrm{E}_2$-$G$-spaces are examples.}.
\begin{enumerate}
\item $D$ is called grouplike if 
 for any  subgroup $H\subgroup G$, $\pi_0^H(D)$ is a group.
\item A $\mathrm{E}_2$-$G$-map $f: C \to D$ is
  called a group completion if $D$ is grouplike and for any
  subgroup $H\subgroup G$,
  $f^H$ induces an isomorphism $H_{*}(C^H)[\pi_0^H(C)^{-1}] \cong
  H_{*}(D^H)$ for any field coefficients.
\end{enumerate}
\end{defn}

\begin{thm}(\cite[15.1]{MayClassify})
   Let $C$ and $D$ be $\mathrm{E}_2$-$G$-spaces.
  Then a weak group completion $f: C \to D$ is a group completion.
\end{thm}

\begin{lem}
  \label{lem:grp-cplt-realize}
  Let $C_{\bullet}$ and $D_{\bullet}$ be Reedy cofibrant simplicial $\mathrm{E}_1$-$G$-spaces.
  Suppose that $f: C_{\bullet} \to D_{\bullet}$ be a levelwise weak group
  completion. Then $f$ induces a weak group completion $|C_{\bullet}| \to
  |D_{\bullet}|$. If $C_{\bullet}$ and $D_{\bullet}$ are levelwise
  $\mathrm{E}_2$, then $f$ induces a group completion.
\end{lem}
\begin{proof} The $\mathrm{E}_n$-$G$-space structures are algebra structures over certain monads and
  thus preserved by geometric realization (\cite[Lemma 8.17]{cellular}).
  The functor $\mathrm{B}$ is the geometric realization of
  a simplicial construction   $\mathrm{B}_m(-)$. So
$\mathrm{B}|C_{\bullet}^H|$
and  $|\mathrm{B}C_{\bullet}^H|$, being two ways of realizing the bisimplicial space
$X_{m,n} = \mathrm{B}_m C_n^H $, are homeomorphic.
We have the following commutative diagram:
\begin{equation*}
  \begin{tikzcd}
    \mid C_{\bullet}^H \mid \ar[d] \ar[r]&
    \mid\Omega \mathrm{B}C_{\bullet}^H \mid \ar[d,"\sim"',"\zeta"] \ar[r, "\sim"]& \mid D_{\bullet}^H\mid\\
 \Omega \mathrm{B}|C_{\bullet}^H| \ar[r, "\cong"] &  \Omega
 |\mathrm{B}C_{\bullet}^H| & 
  \end{tikzcd}
\end{equation*}
The top right map is induced by $\Omega B C_{\bullet}^H \to
D_{\bullet}^H$.
From the assumptions, it is a levelwise equivalence between Reedy cofibrant simplicial
spaces, so the top right map is a weak equivalence. Each $\mathrm{B}C_n^H$ is connected, so the
vertical map $\zeta$ is a weak equivalence by \autoref{cor:connective}. This
proves that the top composite is homotopic to the left arrow up to equivalence.
\end{proof}
\begin{thm}
\label{thm:grp}
  Let $M$ be a $V$-framed manifold and $A$ be a
  $\mathrm{D}^{\mathrm{fr}_V}_V$-algebra in $\Gtop$. There is a $G$-map from \autoref{thm:NPDV}
$$p_M: \int_M A \to \mathrm{Map}_*(M^+, \mathrm{B}^VA).$$
\begin{enumerate}
\item \label{item:grp-1} If $V = W \oplus \bR$ and $M \cong N \times \mathbb{R}$ for a $W$-framed
  manifold $N$, then $p_M$ is a weak group completion.
\item \label{item:grp-2} If $V = U \oplus \bR^{2}$
  and $M \cong N \times \mathbb{R}^2$ for a $U$-framed
  manifold $N$, then $p_M$ is a group completion.
\end{enumerate}
\end{thm}
\begin{proof} From the proof of \autoref{thm:NPDV}, the map $p_M$ is a composite 
  \begin{equation*}
  \begin{tikzcd}
   \mid B_{\bullet}(\mathrm{D}^{\mathrm{fr}_V}_{\myM} ,
    \mathrm{D}^{\mathrm{fr}_V}_V, A)\mid \ar[r,"\alpha_M"]
    & \mid\mathrm{Map}_{*} (M^{+}, \Sigma^V (\mathrm{D}^{\mathrm{fr}_{V}}_V)
    ^{\bullet} A)\mid \ar[r, "\zeta"]
    & \mathrm{Map}_*(M^+, \mathrm{B}^VA)
  \end{tikzcd}
\end{equation*}

We first examine $\alpha_M$. 
By \autoref{thm:scanning-equi}, $\alpha_M$ is the realization of a levelwise
weak group completion between simplicial $\mathrm{E}_1$-$G$-spaces in case of
\autoref{item:grp-1} and $\mathrm{E}_2$-$G$-spaces in case of
\autoref{item:grp-2}. Then by \autoref{lem:grp-cplt-realize}, $\alpha_M$ is a
weak group completion in case of \autoref{item:grp-1} and a group completion in
case of \autoref{item:grp-2}.

Next we proof that $\zeta$ is a weak $G$-equivalence in case
\autoref{item:grp-1}, and case \autoref{item:grp-2} will follow.
 By \autoref{lem:NPDV}~\autoref{item:grp-2}, $ \Sigma^{V}(\mathrm{D}^{\mathrm{fr}_{V}}_V)
  ^{\bullet} A$ is $(\mathrm{dim}(V)-1) = \mathrm{dim}(W)$-connected.  Applying \autoref{lem:NPDV}~\autoref{item:grp-1} to $N$, it has
  a $G$-CW structure of \dimension $\leq \mathrm{dim}(W)$. By
  \autoref{cor:connective} and the fact that $M^+ \simeq
  \Sigma(N^+)$, $\zeta$ is a weak $G$-equivalence. This finishes the proof.
\end{proof}
\appendix
\section{A comparison of scanning maps}
\label{chap:appendix-scanning}
The scanning map studied in \autoref{sec:scanning} is a key input to the eNPD theorem. In this section we compare our scanning map \autoref{eq:sX} to other constructions.

\begin{notn}
  For a $G$-manifold $M$, $\mathrm{Sph}(\mathrm{T}M)$ is the $G$-space obtained by fiberwise one-point compatification of
  the tangent bundle of $M$.
  It is a fiber bundle over $M$ with based fiber $S^n$, where the base point in each fiber is
  the point at infinity.
\end{notn}

Non-equivariantly, people have used the name scanning map to refer to different but related constructions.
In slogan, it is a map from the (fattened) configuration spaces of a manifold $M$ to compactly
defined sections of $\mathrm{T}M$, or compactly supported sections of $\mathrm{Sph}(\mathrm{T}M)$.
McDuff \cite{McDuff75} was probably the first to study
the scanning map for general manifolds. She thought of it as the field of the point charges and
proved homological stability properties of this map.

When $\mathrm{T}M \cong M \times V$, the situation is simpler and we have defined a
scanning map in \autoref{eq:9}:
\begin{equation*}
s_{S^0}: \coprod_{k \geq 0} \mathscr{D}_{\myM}^{\mathrm{fr}_{V}}(k)/\Sigma_k \to \mathrm{Map}_{c}(M, S^V).
\end{equation*}
The left hand side is a model of the configuration space as justified in
\autoref{cor:conf}~\autoref{item:corconf1}; 
the right hand side is equivalent to the compactly supported
sections of $\mathrm{Sph}(\mathrm{T}M) \cong M \times S^V$.

We are interested in the scanning maps of Manthorpe--Tillman and McDuff, both of which can be made
equivariant without pain. The following table is a summary of the natural domains and codomains of
each construction:


\begin{figure}[h!]
  \centering
\begin{tabular}{lcc}
   scanning map & domain & codomain \\ \hline
  this paper, $s$ & framed embeddings $V$ to $M$ & maps $M^+ $ to $S^V$ \\ 
  Manthorpe--Tillman, $\tilde{s}^{\mathrm{MT}}$ & embeddings $V$ to $M$ & sections of
                                                                 $\mathrm{Sph}(\mathrm{T}M)$
  \\
  McDuff, $\tilde{s}^{\mathrm{MD}}$ & configuration of points of $M$ & sections of
                                                                 $\mathrm{Sph}(\mathrm{T}M)$
  \\
\end{tabular}
\end{figure}
In this section, we focus on the case of $V$-framed manifolds $M$.
Then these maps have equivalent domains and identical codomains. 
We will show in \autoref{prop:compareScanning} and \autoref{thm:htpyOfScanning} that:
\begin{thm}
  \label{cor:scanning-equi}
  The scanning maps  $s_X$, $s_X^{\mathrm{MD}}$ and $s_X^{\mathrm{MT}}$  are $G$-homotopic after
  the change of domain. 
\end{thm}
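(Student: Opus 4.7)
The plan is to exhibit all three scanning maps as arising from the same underlying Pontryagin--Thom--type construction once the domains and codomains have been normalized, and then invoke results already in hand to identify them up to $G$-homotopy. First, I would set up the common codomain. The fixed $V$-framing $\phi_M: \mathrm{T}M \cong M \times V$ induces a $G$-fiber bundle isomorphism $\mathrm{Sph}(\mathrm{T}M) \cong M \times S^V$. This identifies compactly supported sections of $\mathrm{Sph}(\mathrm{T}M)$ with $\mathrm{Map}_c(M, S^V)$, which is the codomain of $s_{S^0}$ in \autoref{eq:9}; the labeled versions (sections with labels in $X$) similarly match $\mathrm{Map}_c(M, \Sigma^V X)$. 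Next, I would identify the three natural domains using \autoref{cor:conf}: the zigzag
\begin{equation*}
\mathrm{Emb}^{\mathrm{fr}_V}(\coprod_k V, M) \longrightarrow \mathrm{Emb}(\coprod_k V, M) \xrightarrow{ev_0} \conf{M}
\end{equation*}
consists of $(G \times \Sigma_k)$-homotopy equivalences, with a homotopy inverse provided by $\sigma_0$ from \autoref{equ:sectionexp-at0}. Part~\autoref{item:corconf2} of \autoref{cor:conf} supplies the homotopy commutative triangle that makes all three descriptions of the domain interchangeable up to a specified $G$-homotopy.

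Second, I would compare $s$ to $\tilde{s}^{\mathrm{MT}}$. The definition of $s$ in \autoref{eq:defnsX} uses only the underlying embedding $f_i$ of a framed embedding $\bar{f}_i = (f_i, \alpha_i)$, discarding the path data $\alpha_i$. Hence $s$ factors through the forgetful map to $\mathrm{Emb}(\coprod_k V, M)$. On this common domain, the formula for $s_X$ and the formula for the (labeled) Manthorpe--Tillman map coincide pointwise once the target is transported across the isomorphism $\mathrm{Sph}(\mathrm{T}M) \cong M \times S^V$ coming from $\phi_M$: for $m$ in the image of $f_i$, both yield the class of the unique preimage $f_i^{-1}(m) \in V$ (smashed with the label $x_i$), and both are sent to the base point elsewhere. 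This gives $s_X \simeq \tilde{s}^{\mathrm{MT}}_X$ after the change of domain.

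Third, I would compare $\tilde{s}^{\mathrm{MT}}$ and $\tilde{s}^{\mathrm{MD}}$. McDuff's scanning map is, by its very construction, obtained by choosing a configuration-dependent family of embeddings (via local exponential coordinates at each point) and then applying a pointwise scan. This is precisely the composition $\tilde{s}^{\mathrm{MT}} \circ \sigma_0$, where $\sigma_0$ is the exponential section of \autoref{equ:sectionexp-at0}. Combining this identification with the homotopy commutative triangle from \autoref{cor:conf}~\autoref{item:corconf2} and the forgetful-map factorization of $s$, all three scanning maps fit into a single $G$-equivariantly homotopy commutative diagram of $G$-spaces, which yields the claimed $G$-homotopy.

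The main obstacle will be the careful bookkeeping of auxiliary choices implicit in the Manthorpe--Tillman and McDuff constructions (e.g.\ the tubular neighborhood data and the cutoff function used to extend the scan by the point at infinity outside of the embedded disks), together with producing an explicit $G$-equivariant convex/straight-line homotopy interpolating between an actual embedding and its exponential-map representative. These are essentially the same technical points that appear non-equivariantly in Miller's treatment, but one must verify that the intermediate interpolations can be carried out through $G$-maps so that all homotopies are $G$-homotopies; finite-group averaging or, more cleanly, working inside the $H$-fixed local charts $W = \mathrm{T}_{x_0}M$ used in the proof of \autoref{lem:derivative} should suffice.
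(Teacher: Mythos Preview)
Your third step, comparing $\tilde{s}^{\mathrm{MT}}$ with $\tilde{s}^{\mathrm{MD}}$ via the exponential section $\sigma_0$ and \autoref{cor:conf}\autoref{item:corconf2}, is essentially the paper's argument (\autoref{thm:htpyOfScanning}).

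Your second step, however, contains a genuine error. You claim that $s_X$ and $s_X^{\mathrm{MT}}$ \emph{coincide pointwise} once the codomain is transported along $\phi_M: \mathrm{Sph}(\mathrm{T}M) \cong M \times S^V$. This is not true. Unwinding the Manthorpe--Tillman formula \autoref{eq:scanningMT}: for $m = f(v)$, one has $\tilde{s}^{\mathrm{MT}}(\bar{f})(m) = df_v(v) \in \mathrm{T}_mM$, so after applying $\phi_M$ the result is
\[
  s^{\mathrm{MT}}(\bar{f})(m) = \phi_M\bigl(df_v(v)\bigr) \in S^V,
\]
whereas $s(\bar{f})(m) = v \in S^V$. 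These agree only if $\phi_M \circ df$ is the identity on $\mathrm{T}V$, which is precisely what a $V$-framed embedding is \emph{not} required to satisfy on the nose. The paper states this discrepancy explicitly just before \autoref{prop:compareScanning}.

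The fix is exactly the piece of data you discarded: the Moore path $\alpha$ in $\bar{f}=(f,\alpha)$ is a homotopy from the standard framing $\alpha(0)$ of $\mathrm{T}V$ (yielding $v$) to $\alpha(1)=\phi_M\circ df$ (yielding $\phi_M(df_v(v))$). The paper's homotopy is
\[
  H(\bar{f},t)(m) = \alpha(t)_{v}(v), \qquad v = f^{-1}(m),
\]
which interpolates between $s$ and $s^{\mathrm{MT}}$ equivariantly. Note that although both $s$ and $s^{\mathrm{MT}}$ individually factor through the forgetful map $\mathrm{D}_M^{\mathrm{fr}_V}(X)\to \mathrm{Emb}_M(X)$, the homotopy $H$ does \emph{not}: it genuinely uses the path component of the framed-embedding data. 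Your proposed ``straight-line homotopy'' between an embedding and its exponential representative addresses a different issue (the comparison of domains) and cannot substitute for this step.
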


\begin{notn}
  In the above and subsequent paragraphs, 
\begin{itemize}
\item We use the letter $s$ for scanning maps without labels
  and $s_X$ for labels in $X$. 
\item A tilde is put on $s$ to denote when the codomain is the sections of
  $\mathrm{Sph}(\mathrm{T}M)$, that is, before composition with the framing. 
\item A superscript is put on $s$ to
  distinguish between the different authors in the literature. 
\end{itemize} 
\end{notn}

\subsection{Scanning map from tubular neighborhood}
\label{sec:Manthorpe-Tillman}
Manthorpe--Tillman \cite[Section 3.1]{MT14} gave a map
\begin{equation*}
  \gamma^{+}: 
  \big( \coprod_{k \geq 0} \mathrm{Emb}(\amalg_k \mathbb{R}^n, M) \times_{\Sigma_{k}} X^{k} \big) /\sim \ 
 \to \mathrm{Sect}_c(M, \mathrm{Sph}(\mathrm{T}M)\wedge_M
\tau_{X}).
\end{equation*}
Here,
$\mathrm{Sect}_c$ is the space of compactly supported sections;
$\tau_X$ is the constant parametrized base space $X \times M$ over $M$ 
and $\mathrm{Sph}(\mathrm{T}M)\wedge_M \tau_{X}$ is the fiberwise smashing of
$\mathrm{Sph}(\mathrm{T}M)$ with $X$.
(To translate, take their $M_0 = \varnothing$, $Y = W \times X$.
 Their $E_k(M,\pi)$ is the space $\mathrm{Emb}(\amalg_k \mathbb{R}^n, M) \times_{\Sigma_{k}}
 X^{k}$, and their $\Gamma(W \setminus M_0, W \setminus M, \pi)$ is $\mathrm{Sect}_c(M,
 \mathrm{Sph}(\mathrm{T}M)\wedge_M \tau_{X}) $.) 

 The key feature of their construction is to exploit the data of the tubular neighborhood, so
 a framing on $M$ is not needed. For example, when $k = 1$, we start
 with an embedding $f \in\mathrm{Emb}(\bR^n, M)$ and want to define $\gamma^+(f)$,
 a compactly supported section of $\mathrm{Sph}(\mathrm{T}M)$.
The image of $f$ is a tubular neighborhood of the image of $0 \in V$ in
$M$, and $f$ induces an inclusion of bundles $df: \mathrm{T}\bR^n \to \mathrm{T}M$. 
There is a canonical diagonal section $\bR^{n} \to \bR^n \times \bR^n \cong \mathrm{T}\bR^n$.
Pushing this section by $df$ gives $\gamma^+(f)$.

We can modify their $ \gamma^+$ by replacing $\mathbb{R}^n$ by the representation $V$ to get
\begin{equation*}
\gamma^+_V:\mathrm{Emb}_{\myM}(X) \equiv \big( \coprod_{k \geq 0} \mathrm{Emb}(\amalg_k V, M) \times_{\Sigma_{k}}
 X^{k} \big) /\sim \  \to \mathrm{Sect}_c(M, \mathrm{Sph}(\mathrm{T}M)\wedge_M
\tau_{X}).
\end{equation*}
We then precompose with the forgetting map 
$ \mathrm{D}_{\myM}^{\mathrm{fr}_V} (X) \to \mathrm{Emb}_{\myM}(X)$
in \autoref{rem:emb-data} to get
\begin{equation}
\label{equ:scanningX}
\tilde{s}_{X}^{\mathrm{MT}}: \mathrm{D}_{\myM}^{\mathrm{fr}_V} (X) \to \mathrm{Sect}_c(M, \mathrm{Sph}(\mathrm{T}M)\wedge_M
\tau_{X}).
\end{equation}
We describe how $\tilde{s}^{\mathrm{MT}}_X$ works on the subspace $k=1$ and it
is similar on the whole space. For the element $\bar{f}=(f, \alpha) \in
\mathrm{Emb}^{\mathrm{fr}_V}(V, M)$, we take the
embedding $f : V \to M$. The derivative map of $f$ is  $df : \mathrm{T}V \cong V \times V  \to \mathrm{T}M$.
For each $m \in \mathrm{image}(f)$, we need a vector $\tilde{s}^{\mathrm{MT}}(f) \in
\mathrm{T}_mM$ that is determined by $f$. Denote $v = f^{-1}(m) \in V$. We have
$df_v: V \cong \mathrm{T}_vV \to  \mathrm{T}_mM$. Then the explicit formulas 
without or with labels are given by
\begin{equation}
\label{eq:scanningMT}
\tilde{s}^{\mathrm{MT}}(\bar{f})(m) = df_{v}(v) \quad \text{and} \quad  \tilde{s}^{\mathrm{MT}}_X(\bar{f},x)(m) = df_v(v) \wedge
x.
\end{equation}
Both of them are $G$-maps.

The $V$-framing $\phi_M: \mathrm{T}M \to V$ induces
 $\mathrm{Sph}(\mathrm{T}M) \sm_M \tau_X \cong M \times \Sigma^VX$. So we obtain a map which we
 still call the scanning map:
\begin{equation}
\label{eq:defnsXMT}
s^{\mathrm{MT}}_{X} :\mathrm{D}_{\myM}^{\mathrm{fr}_V} (X) 
\to \mathrm{Map}_c(M, \Sigma^VX).
\end{equation}

A prior, this scanning map is different from the scanning map \autoref{eq:defnsX} in \autoref{sec:scanning}.
For an element $\bar{f}=(f,\alpha)$ where $f: V \to M$ with $f(v)=m$, we have
$s(\bar{f})(m) = v \in V$ in \autoref{eq:defnsX},
while $s^{\mathrm{MT}}(\bar{f})(m)=df_v(v) \in \mathrm{T}_mM$ in \autoref{eq:scanningMT}.
However, the data of a homotopy in defining the $V$-framed embedding ensure that the two
approaches give homotopic scanning maps:
\begin{prop}
  \label{prop:compareScanning}
  The map $s_{X}$ defined by \autoref{eq:defnsX} is $G$-homotopic to the map
  $s^{\mathrm{MT}}_{X}$ defined by \autoref{eq:scanningMT}.
\end{prop}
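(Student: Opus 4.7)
The plan is to exhibit an explicit $G$-equivariant homotopy between $s_X$ and $s_X^{\mathrm{MT}}$ built directly from the framing-path data that already lives inside each element of $\mathrm{D}_M^{\mathrm{fr}_V}(X)$. The key observation is that the only difference between the two maps, on the level of a single embedding $\bar{f}=(f,\alpha)$ with $f(v)=m$, is that $s$ uses the value $v\in V$ (i.e.\ the canonical framing $\phi_V$ applied to $v\in \mathrm{T}_vV$) while $s^{\mathrm{MT}}$ uses $\phi_M\circ df_v$ applied to $v$; but $\alpha$ is by construction a path in $\mathrm{Hom}(\mathrm{T}V,V)$ from $\phi_V$ to $\phi_M\circ df$, so evaluation of $\alpha$ at $v$ interpolates exactly between the two.

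Concretely, for a single disk define
\begin{equation*}
H:[0,1]\times \mathrm{D}_M^{\mathrm{fr}_V}(X)\to \mathrm{Map}_c(M,\Sigma^VX),\qquad
H_t([\bar{f},x])(m)=
\begin{cases}
\alpha(t)_v(v)\wedge x, & m=f(v);\\
*, & \text{otherwise}.
\end{cases}
\end{equation*}
(If the Moore-path convention is being used, so that $\alpha$ has some locally constant length $l$, replace $\alpha(t)$ by $\alpha(tl)$; this is harmless because $l$ is locally constant.) For the general $k$-ary level, extend in the obvious way using the tuple of homotopies $(\alpha_i)_{i=1}^k$: for $m$ in $\mathrm{image}(f_i)$, use $\alpha_i(t)_{v_i}(v_i)\wedge x_i$, and send $m$ to the base point elsewhere. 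The first step of the proof is to write this down carefully, note that $H_0=s_X$ because $\phi_V$ sends $(v,v)\in\mathrm{T}_vV$ to $v$ (this is the canonical $V$-framing fixed at the start of \autoref{sec:embeddingspace}), and $H_1=s_X^{\mathrm{MT}}$ by the formula \autoref{eq:scanningMT} composed with $\phi_M$.

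The second step is the well-definedness checks. Continuity in $t$ follows from continuity of $\alpha$ in $t$; continuity in the embedding data follows because $f_i,\alpha_i$ vary continuously and the disjoint images $\mathrm{image}(f_i)$ vary continuously in $M$. Compatibility with the base-point identification $\sim$ of \autoref{rmk:equi-relation} is immediate: if $x_i=*$ then the contribution from $f_i$ is the base point of $\Sigma^VX$ regardless of $\alpha_i$, so $H_t$ factors through the quotient. The $\Sigma_k$-equivariance is automatic since $H_t$ is defined symmetrically in the indices $i$. $G$-equivariance follows because the homotopies $\alpha_i$ are the very data making $\bar{f}_i$ a $G$-equivariant element of $\mathrm{Hom}^{\theta}$: the diagonal $G$-action on $(f_i,\alpha_i,x_i)$ matches the conjugation action on the target $\mathrm{Map}_c(M,\Sigma^VX)$ precisely because $\phi_V$ and $\phi_M$ are $G$-maps.

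I do not expect a serious obstacle; the proof is almost a tautology once one unpacks the definitions. The only mild bookkeeping issue is the Moore-path length $l$ (\autoref{rem:emb-data}) which should be threaded through the definition of $H_t$, but since $l$ is locally constant on each connected piece this produces no discontinuity. Extending $H$ from the single-disk case to the full level-$k$ space and passing to the quotient defining $\mathrm{D}_M^{\mathrm{fr}_V}(X)$ is formal, and the resulting homotopy $H$ is then the required $G$-homotopy between $s_X$ and $s_X^{\mathrm{MT}}$.
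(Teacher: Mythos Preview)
Your proposal is correct and takes essentially the same approach as the paper: both use the framing path $\alpha$ (with $\alpha(0)=\phi_V$ and $\alpha(1)=\phi_M\circ df$) to build the explicit homotopy $H_t(\bar{f})(m)=\alpha(t)_{f^{-1}(m)}(f^{-1}(m))$, check $G$-equivariance, and observe compatibility with the base-point identification to pass to $\mathrm{D}_M^{\mathrm{fr}_V}(X)$. The paper presents this for the unlabeled case first and then remarks on compatibility with forgetting from $k$ to $k{-}1$, whereas you carry the labels throughout, but the content is the same.
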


\begin{proof}
We show that $s \simeq s^{\mathrm{MT}}: \mathscr{D}_{\myM}^{\mathrm{fr}_{V}}(k) \to
\mathrm{Map}_{c}(M, S^{V})$.
We write the homotopy explicitly for $k=1$ and the case for general $k$ is similar.
To unravel the data, an element $\bar{f} =
(f, \alpha) \in \mathscr{D}_{\myM}^{\mathrm{fr}_{V}}(1)$
consists of an embedding $f: V \to M$ and a homotopy $\alpha$ of two maps
$\mathrm{T}V \to V$, where $\alpha(0)$ is the standard framing on $V$ and $\alpha(1)$ is $\phi_M \circ df$.

The two scanning maps use the two endpoints of this homotopy. Namely,
for $m$ in $\mathrm{Image}(f)$, write $v = f^{-1}(m) \in V\cong \mathrm{T}_vV$.
Then the first approach can be written as
\begin{equation*}
s(\bar{f})(m) = v = \alpha(0)_{v}(v)
\end{equation*}
and the $df$-shifted-approach can be written as
\begin{equation*}
s^{\mathrm{MT}}(\bar{f})(m) = \phi_M df_{v}(v) = \alpha(1)_{v}(v). 
\end{equation*}

Now it is clear that we can define a homotopy
\begin{equation*}
H: \mathscr{D}_{\myM}^{\mathrm{fr}_{V}}(1) \times I  \to \mathrm{Map}_{c}(M, S^{V});
\end{equation*}
\begin{equation*}
H(\bar{f},t)(m) = \alpha(t)_{f^{-1}(m)}(f^{-1}(m)).
\end{equation*}
It is $G$-equivariant and gives a homotopy
between $H(-,0) = s$ and $H(-,1) = s^{\mathrm{MT}}$.
The claim follows from observing that this homotopy is compatible with forgetting from $k$ to $k-1$ . 
\end{proof}

\subsection{Scanning map using geodesic}
\label{sec:mcduff}
 McDuff gave a geometric construction for
\begin{equation*}
\tilde{s}^{\mathrm{MD}}:  F_M(S^{0}) = \coprod_{k \geq 0} \mathscr{F}_M(k) \to
\mathrm{Sect}_c(M, \mathrm{Sph}(\mathrm{T}M)),
\end{equation*}
Recall that $\mathscr{F}_M(k)$ is the configuration space of $k$ points in $M$.
Note that the base point in each fiber of $\mathrm{Sph}(\mathrm{T}M)$ is the
point at infinity. A compactly supported section of $\mathrm{Sph}(\mathrm{T}M)$
is just a vector field defined in the interior of a compact set on $M$ that blows
up to infinity towards the boundary.

We first copy McDuff's construction and fit it into a neat comparison with the previously defined
scanning maps.
We focus on the case when $M$ is without boundary. Then we can translate her $M_{\epsilon}$ to our $M$;
her $E_M$ can be identified with our $\mathrm{Sph}(\mathrm{T}M)$;
her $\tilde{C}_M$ to our $F_M(S^0)$; her $\tilde{C}_{\epsilon}(M)$  to a subspace of our
$\mathrm{Emb}_M(S^0)$.

In summary, $\tilde{s}^{\mathrm{MD}}$ goes in two steps: fatten up the configurations (\cite[Lemma
2.3]{McDuff75}) and use geodesics to give compactly supported vector fields (\cite[p95]{McDuff75}).
\begin{equation}
\label{eq:defnsMD}
  \begin{tikzcd}
    \tilde{s}^{\mathrm{MD}}: F_M(S^0) \ar[r, " \mathrm{fatten}"] &
    \tilde{C}_{\epsilon}(M) \ar[r, "\phi_{\epsilon}"] \ar[d," \mathrm{include}"']
    & \mathrm{Sect}_c(M, E_M) \ar[d,"\eta_1","\cong"'] \\
    & \mathrm{Emb}_M(S^0) \ar[r,"\gamma^+"] & \mathrm{Sect}_c(M, \mathrm{Sph}(\mathrm{T}M))
  \end{tikzcd}
\end{equation}
The commutative diagram \autoref{eq:defnsMD} is central in this section. 
In the first row, $\mathrm{fatten}$ and $\phi_{\epsilon}$ are the two steps in McDuff's scanning
map. The map $\gamma^+$ is from \autoref{sec:Manthorpe-Tillman}.
We will define the undefined spaces and maps as we go along.

Define
\begin{align*}
\tilde{C}_{\epsilon}(M)_1 & \equiv \{\mathrm{exp}_{m_{0}}: \mathrm{T}_{m_{0}}M \to M
                            \text{ such that it is a diffeomorphism on the $\epsilon$-ball}\}; \\
\tilde{C}_{\epsilon}(M) & \equiv \{(\delta, e_1, \cdots, e_k) |0 < \delta \leq \epsilon, \ k \in \bN, e_i \in \tilde{C}_{\epsilon}(M)_1
                          \text{ for }1 \leq i \leq k, \\
  & \phantom{\equiv \{} \text{images of $e_i$ on the $\delta$-balls are disjoint in $M$}\}.
\end{align*}

For preparation, we write down an explicit homeomorphism 
\begin{equation*}
  \eta_{\epsilon}: D_{\epsilon}(\bR^n) \to \bR^n; \ v \mapsto \tan\big(\frac{\pi |v|}{2\epsilon}\big) \frac{v}{|v|}.
\end{equation*}
Here, $D_{\epsilon}(\bR^n)$ is the disk of radius $\epsilon$ in $\bR^n$. Then, abusively we also have
$$\eta_1: D_1(\mathrm{T}_mM)/\partial D_1(\mathrm{T}_mM) \cong \mathrm{T}_mM \cup \{\infty\}
\equiv \mathrm{Sph}(\mathrm{T}_mM).$$
Define $E_M$ to be the bundle over $M$ whose fiber over $m$ is $ D_1(\mathrm{T}_mM)/\partial
D_1(\mathrm{T}_mM)$, which is identified with $\mathrm{Sph}(\mathrm{T}_mM)$ through $\eta_1$.
This is the right vertical map in \autoref{eq:defnsMD}.

We give the vertical map in the middle of \autoref{eq:defnsMD}. For an element
$\mathrm{exp}_{m_{0}} \in  \mathrm{exp}_{m_{0}} $,
the composite $\mathrm{exp}_{m_{0}} \circ \eta_{\epsilon}^{-1}$ is an embedding
$\bR^n \to M$, so we can identify $\tilde{C}_{\epsilon}(M)_1$ with a subspace of
$\mathrm{Emb}(\bR^n, M)$. Similarly, we can include as subspace:
\begin{equation*}
  \begin{array}[h]{ccc}
    \tilde{C}_{\epsilon}(M) & \to & \mathrm{Emb}_M(S^0)\\
    (\delta,e_1,\cdots,e_k) & \mapsto & (e_1\circ \eta_{\delta}^{-1},\cdots,e_k\circ \eta_{\delta}^{-1})
  \end{array}
\end{equation*}

In McDuff's first step, let us define $\phi_{\epsilon}$ and compare it to the map $\gamma^+$ locally.
Put a Riemannian metric on $M$.
The input for $\phi_{\epsilon}$ are the exponential maps in $\tilde{C}_{\epsilon}(M)_1$.
Define
\begin{equation*}
\phi_{\epsilon}(\mathrm{exp}_{m_{0}})(m) = \begin{cases}
  * & \text{ if }\mathrm{dist}(m,m_0) > \epsilon; \\
  \frac{\mathrm{dist}(m,m_0)}{\epsilon} \cdot t(m,m_0) & \text{ if }\mathrm{dist}(m,m_0) \leq \epsilon.
\end{cases}
\end{equation*}
 Here, the values are vectors in $D_1(\mathrm{T}_mM)$; $t(m,m_0)$ is the unit tangent at $m$ of the minimal
 geodesic from $m_0$ to $m$; $\mathrm{dist}(m,m_0)$ is the distance between $m$ and $m_0$.
Now, it can be easily verified that
\begin{equation*}
\gamma^+( \mathrm{exp}_{m_{0}} \circ \eta_{\epsilon}^{-1} ) = \eta_{1} \circ \phi_{\epsilon}(\mathrm{exp}_{m_{0}}).
\end{equation*}
We can work the same way to extend $\phi_{\epsilon}$ to $\tilde{C}_{\epsilon}(M)$ and we have the
commutativity part of \autoref{eq:defnsMD}:
$$\gamma^+(e_1\circ \eta_{\delta}^{-1},\cdots,e_k\circ \eta_{\delta}^{-1}) = \eta_{1} \circ \phi_{\epsilon}(\delta,e_1,\cdots,e_k).$$

In McDuff's second step, we describe the fattening map in \autoref{eq:defnsMD}.
We can take a continuous positive function $\epsilon$ on $M$
such that for any $m_{0} \in M$, the exponential map $\mathrm{exp}_{m_{0}}: \mathrm{T}_{m_{0}}M \to M$
is always a diffeomorphism on the $\epsilon(m_{0})$-ball.
(We note the change here: $\epsilon(m_0)$ is going to serve as
the $\epsilon$ in the first step.
It does not harm to think as if $\epsilon(m_0) = \epsilon$ for all $m_0$.)
Then, as is easily checked, we can choose a continuous positive function $\bar{\epsilon}$ on
$F_M(S^0)$ such that at any $p=(m_1, \cdots, m_k) \in \mathscr{F}_M(k)$,
\begin{enumerate}[(i)]
\item\label{item:McDuff1} for all $i = 1, \cdots, k$, $\bar{\epsilon}(p) \leq
 \epsilon(m_{i})$ ;
\item\label{item:McDuff2} the $m_i$'s are at least $2 \bar{\epsilon}(p)$ apart
 from each other.
\end{enumerate}
The fattening map in \autoref{eq:defnsMD} sends
$p=(m_1, \cdots, m_k) \in \mathscr{F}_M(k)$ to $(\bar{\epsilon}(p), \mathrm{exp}_{m_1} , \cdots,
\mathrm{exp}_{m_k} ) \in \tilde{C}_{\epsilon}(M)$.
 The continuity of $\tilde{s}^{\textrm{MD}}$ follows from the continuity of $\bar{\epsilon}$.

 \begin{rem}
   \label{rem:sigma0}
  The composite 
\begin{equation*}
  \begin{tikzcd}
 F_M(S^0) \ar[r,"\mathrm{fatten}"] & \tilde{C}_{\epsilon}(M) \ar[r,"\mathrm{include}"] & \mathrm{Emb}_M(S^0)
  \end{tikzcd}
\end{equation*}
in \autoref{eq:defnsMD} is up to homotopy the $\sigma_0$ in \autoref{equ:sectionexp-at0}.
 \end{rem}
 
Equivariantly, we can take all of the Riemanian metric, $\epsilon$ and $\bar{\epsilon}$ to be $G$-invariant
because $G$ is finite: for example, replacing $\epsilon$ by $\Sigma_{g \in G} \epsilon(g-)/|G|$ will
do. Then $\tilde{s}^{\textrm{MD}}$ defined by \autoref{eq:defnsMD} is $G$-equivariant.
We can fiberwise smash with labels to get 
\begin{equation*}
\tilde{s}^{\textrm{MD}}_{X}:F_M(X)  \to  \mathrm{Sect}_c(M, \mathrm{Sph}(\mathrm{T}M) \wedge_M \tau_{X}).
\end{equation*}
We note that there is no $V$ involved in $\tilde{s}^{\textrm{MD}}_{X}$.
When $M$ is $V$-framed, we can compose it with the $V$-framing on $M$ to get
\begin{equation*}
s^{\textrm{MD}}_{X}:F_M(X)  \to  \mathrm{Map}_c(M, \Sigma^VX).
\end{equation*}

This scanning map $s^{\textrm{MD}}_{X}$ is good only for studying the configuration spaces, possibly
with labels.
It depends on the fattening-up radius $\bar{\epsilon}$,
which is not recorded explicitly in the data. The choice does not matter because a different choice of the fattening-up will give a homotopic scanning map.
But for the purpose of a scanning map out of ``configuration spaces with summable
labels'' or the factorization homology, remembering the radius is important to sum the labels.

We have seen three scanning maps so far: $s_X$ in~\autoref{eq:defnsX},
$s_X^{\mathrm{MT}}$ in~\autoref{eq:scanningMT} and $s_X^{\mathrm{MD}}$ in~\autoref{eq:defnsMD}. We
have shown that $s_X$ and $s_X^{\mathrm{MT}}$ are $G$-homotopic in \autoref{prop:compareScanning}.
We compare $s_X^{\mathrm{MD}}$ and $s_X^{\mathrm{MT}}$ in the following proposition.
\begin{prop}
  \label{thm:htpyOfScanning}
 The following diagram is $G$-homotopy commutative:
\begin{equation*}
  \begin{tikzcd}
     \mathrm{D}^{\mathrm{fr}_V}_{\myM} X \ar[r,"s_{X}^{\mathrm{MT}}"]&   \mathrm{Map}_c(M, \Sigma^V X)  \\
     F_M X \ar[from = u, "ev_0"] \ar[ur,"s^{\mathrm{MD}}_{X}"'] &
  \end{tikzcd}
\end{equation*}
\end{prop}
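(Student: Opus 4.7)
The plan is to use the homotopy inverse of $ev_0$ to reduce the triangle to a comparison on $F_M X$. By \autoref{cor:conf}\autoref{item:corconf1}, $ev_0$ is a $(G \times \Sigma_k)$-equivalence; let $\bar{\sigma}_0: F_M X \to \mathrm{D}^{\mathrm{fr}_V}_{\myM} X$ be the dotted homotopy inverse produced in \autoref{cor:conf}\autoref{item:corconf2}, extended over labels by fiberwise smashing with $X$. The stated triangle commutes up to $G$-homotopy if and only if
\[
s_X^{\mathrm{MT}} \circ \bar{\sigma}_0 \;\simeq\; s_X^{\mathrm{MD}},
\]
and this is what I will verify.

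By construction (\autoref{eq:defnsXMT}), $s_X^{\mathrm{MT}} = (\phi_M)_{*} \circ \gamma_V^{+} \circ \pi$, where $\pi: \mathrm{D}^{\mathrm{fr}_V}_{\myM}(X) \to \mathrm{Emb}_{\myM}(X)$ is the forgetting-the-framing map of \autoref{rem:emb-data} and $(\phi_M)_{*}$ is the trivialization $\mathrm{Sph}(\mathrm{T}M) \wedge_M \tau_X \cong M \times \Sigma^V X$ induced by the $V$-framing. The commutativity of the triangle in \autoref{cor:conf}\autoref{item:corconf2} gives $\pi \circ \bar{\sigma}_0 \simeq \sigma_0$, with $\sigma_0$ the unframed exponential section of \autoref{equ:sectionexp-at0}. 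Hence
\[
s_X^{\mathrm{MT}} \circ \bar{\sigma}_0 \;\simeq\; (\phi_M)_{*} \circ \gamma_V^{+} \circ \sigma_0.
\]

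Next, \autoref{rem:sigma0} asserts that $\mathrm{include} \circ \mathrm{fatten}$ is $(G \times \Sigma_k)$-homotopic to $\sigma_0$ as maps $F_M(S^0) \to \mathrm{Emb}_{\myM}(S^0)$; both are exponential sections differing only in the continuous choice of fattening radius, and the space of admissible radii at each configuration is convex, so a linear interpolation of any two $G$-invariant such choices is a $G$-equivariant homotopy. Fiberwise smashing with $X$ promotes this to the labeled homotopy $\mathrm{include} \circ \mathrm{fatten} \simeq \sigma_0$ as maps $F_M X \to \mathrm{Emb}_{\myM}(X)$. Then the commutativity of the square in \autoref{eq:defnsMD}, also extended by fiberwise smashing with $X$, yields
\[
(\phi_M)_{*} \circ \gamma_V^{+} \circ \mathrm{include} \circ \mathrm{fatten} \;=\; (\phi_M)_{*} \circ \eta_1 \circ \phi_{\epsilon} \circ \mathrm{fatten} \;=\; s_X^{\mathrm{MD}}.
\]
Chaining the homotopies delivers $s_X^{\mathrm{MT}} \circ \bar{\sigma}_0 \simeq s_X^{\mathrm{MD}}$.

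The only substantive point is the $(G \times \Sigma_k)$-equivariance of the homotopy $\mathrm{include} \circ \mathrm{fatten} \simeq \sigma_0$ and its compatibility with arbitrary labels $X$. This is not really an obstacle: because $G$ is finite, $G$-invariant Riemannian metrics and continuous radius functions exist by averaging, the convex combination of two $G$-invariant radii is again $G$-invariant, and the label dependence of every map in sight is simply a fiberwise smash with $X$, so the unlabeled homotopy applies verbatim after smashing.
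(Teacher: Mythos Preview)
Your argument is correct and uses the same two ingredients as the paper's proof: the homotopy commutativity from \autoref{cor:conf}\autoref{item:corconf2} (that the forgetting map is $G$-homotopic to $\sigma_0 \circ ev_0$) and the commutative square \autoref{eq:defnsMD} together with \autoref{rem:sigma0} (that $\gamma_V^+ \circ \sigma_0 \simeq \tilde{s}_X^{\mathrm{MD}}$). The only difference is that the paper verifies $s_X^{\mathrm{MD}} \circ ev_0 \simeq s_X^{\mathrm{MT}}$ directly by composing these two homotopies, whereas you pass through a homotopy inverse $\bar{\sigma}_0$ of $ev_0$ and check $s_X^{\mathrm{MT}}\circ\bar{\sigma}_0 \simeq s_X^{\mathrm{MD}}$; this detour is harmless but unnecessary, and note that \autoref{cor:conf}\autoref{item:corconf2} does not actually construct such a $\bar{\sigma}_0$ (the dotted arrow there is $\sigma_0$ landing in the unframed embedding space), though its existence follows from part~\autoref{item:corconf1}.
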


\begin{proof}
  Recall that $s_{X}^{\mathrm{MT}}$ is the composite of the forgetting map and $\gamma^+_V$:
  \begin{equation*}
   s_{X}^{\mathrm{MT}}: \mathrm{D}^{\mathrm{fr}_V}_{\myM} X \to   \mathrm{Emb}_{\myM}(X) \overset{\gamma^+_V}{ \to }  \mathrm{Map}_c(M, \Sigma^V X).
\end{equation*}
  By \autoref{eq:defnsMD} and \autoref{rem:sigma0}, 
  we have a homotopy commutative diagram:
\begin{equation*}
  \begin{tikzcd}    
     \mathrm{Emb}_{\myM}(X) \ar[r,"\gamma^+_V"]
  &  \mathrm{Map}_c(M, \Sigma^V X)  \\
     F_M(X) \ar[u,"\sigma_0"] \ar[ur,"s^{\mathrm{\textrm{MD}}}_X"'] & 
  \end{tikzcd}
\end{equation*}
By \autoref{cor:conf}\autoref{item:corconf2}, $\sigma_0 \circ ev_0$ is $G$-homotopic to the
forgetting map $ \mathrm{D}^{\mathrm{fr}_V}_{\myM} X \to   \mathrm{Emb}_{\myM}(X) $. So the claim follows.
\end{proof}



\subsection{Scanning equivalence}
We are interested in when the scanning map is an equivalence.
In this subsection, we list  Rourke--Sanderson's results from \cite{RS00}.
Their work is based on McDuff's scanning map. The $C_MX$ in their paper is our $(F_MX)^{G}$.

\begin{thm}
  \label{thm:RS}
  The scanning map $s_{X}^{\mathrm{MD}}: F_MX \to \mathrm{Map}_c(M, \Sigma^V X)$ is:
\begin{enumerate}
\item \label{item:RS1} a weak $G$-equivalence if  $X$ is $G$-connected,
\item \label{item:RS2} or a weak group completion if  $V \cong W \oplus \bR$ and $M \cong N \times
  \mathbb{R}$. Here, $W$ is a $(n-1)$-dimensional $G$-representation and $N$ is a $W$-framed 
  $G$-manifold, so that $N \times \bR$ is $V$-framed.
\end{enumerate}
\end{thm}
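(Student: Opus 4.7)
The plan is to follow Rourke--Sanderson's strategy from \cite{RS00}, which rests on equivariant transversality combined with induction on the orbit-type stratification of $M$. Since both source and target behave well under taking $H$-fixed points, it suffices to check that $(s_X^{\mathrm{MD}})^H$ is a weak equivalence (resp.\ group completion) for each subgroup $H \subgroup G$. Using the decomposition of fixed points of labeled configuration spaces carried out in the proof of \autoref{lem:Gconnected}, together with the corresponding decomposition of $\mathrm{Map}_c(M, \Sigma^V X)^H$ via mapping spaces on the fixed-point submanifold $M^H$ with labels in suitable fixed-point subspaces of $X$, I would reduce to a family of non-equivariant statements indexed by orbit types.

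For the base case $M = V$, the target becomes $\mathrm{Map}_c(V, \Sigma^V X) \cong \Omega^V \Sigma^V X$ and the source is $F_V X$. Here I would identify $s_X^{\mathrm{MD}}$, up to $G$-homotopy, with the Guillou--May recognition map, and invoke the equivariant recognition principle of \cite{GM17}: for $G$-connected $X$ it gives a weak $G$-equivalence $F_V X \simeq \Omega^V \Sigma^V X$. The comparison with the Guillou--May map is facilitated by \autoref{cor:compareDV} and the compatibility of the scanning construction with the operadic structure.

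For general $V$-framed $M$, I would proceed by induction on an equivariant handle decomposition of $M$ provided by Illman's \autoref{lem:Illman}. Writing $M$ as a union $M_1 \cup_{M_{12}} M_2$ where $M_2$ is a single equivariant handle, one checks that scanning converts the pushout of manifolds (via open embeddings) into a homotopy pullback of compactly supported mapping spaces, and that the induced map on configuration spaces factors compatibly. Combined with the base case applied to each handle $G/H \times D(V_H)$ (a product of fixed-point data with a representation disk), the inductive step closes.

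The main obstacle is the equivariant transversality argument at the heart of \cite{RS00}: to produce a homotopy inverse, one takes a section $\sigma \in \mathrm{Map}_c(M, \Sigma^V X)$, deforms it equivariantly so that it is transverse to the inclusion $X \hookrightarrow \Sigma^V X$ across every stratum simultaneously, and reads off an equivariant labeled configuration from $\sigma^{-1}(X \setminus *)$. Achieving transversality compatibly on all fixed-point strata, and doing so continuously in families, is the technical core. For part (2), the extra $\mathbb{R}$-factor makes $F_M X$ an $A_\infty$-$G$-monoid under stacking along $\mathbb{R}$, the scanning map becomes an $A_\infty$-$G$-map into a loop space, and the group-completion statement follows by combining part (1) applied after suspension (to ensure $G$-connectedness of the labels) with the identification $\Omega B C \simeq C^{\mathrm{grp}}$ for $A_\infty$-monoids.
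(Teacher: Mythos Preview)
Your proposal outlines how one might reprove the underlying Rourke--Sanderson results, but the paper's proof is far more economical: part~(1) is simply cited as \cite[Theorem~5]{RS00}, with no further argument. For part~(2), the paper does not appeal to handle induction or abstract group completion; instead it exploits the product structure $M \cong N \times \bR$ to factor the scanning map as
\[
F_MX = F_{\bR}(F_NX) \to \mathrm{Map}_c(\bR, \Sigma F_NX) \to \mathrm{Map}_c(\bR, F_N(\Sigma X)) \to \mathrm{Map}_c(\bR, \mathrm{Map}_c(N, \Sigma^{1+W}X)).
\]
The last map is a weak $G$-equivalence by part~(1) applied to the manifold $N$ with the $G$-connected label $\Sigma X$; the composite of the first two, call it $j$, is then identified with a group completion by citing \cite[Theorem~3]{RS00}, which supplies an explicit equivalence $\mathrm{B}\big((F_MX)^H\big) \simeq \big(F_N(\Sigma X)\big)^H$ for each $H \subgroup G$ together with a homotopy-commutative square showing $j^H$ is the group-completion map.

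Your handle-induction sketch for part~(1) is plausible in outline, but the step ``scanning converts the pushout of manifolds into a homotopy pullback'' requires the source $F_MX$ itself to satisfy a gluing property over equivariant handle attachments, which is a nontrivial statement in its own right and is not how \cite{RS00} actually proceeds; their argument is a direct transversality construction, not a handle induction. Your part~(2) has the right idea---suspend labels to gain $G$-connectedness and then invoke group completion---and this matches the paper's strategy, but the paper's explicit factorization through $F_N(\Sigma X)$ makes the argument concrete and avoids having to set up the $A_\infty$-$G$-map structure on the full scanning map from scratch.
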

\begin{proof}
\autoref{item:RS1} is \cite[Theorem 5]{RS00}.
For \autoref{item:RS2}, we first note that when $M \cong N \times \mathbb{R}$, the map
$s^{\mathrm{\textrm{MD}}}_{X}$ factors in steps as:
\begin{eqnarray}
  \label{eq:8}
  & F_MX = F_{\bR}(F_NX)  &\to  \mathrm{Map}_{c}(\mathbb{R},\Sigma F_N(X)) \\
  \label{eq:28}& & \to \mathrm{Map}_{c}(\mathbb{R},F_N(\Sigma X)) \\
  \label{eq:17}&  & \to \mathrm{Map}_{c}(\mathbb{R},\mathrm{Map}_{c}(N, \Sigma^{1+W} X)).
\end{eqnarray}
Here, \autoref{eq:8} and \autoref{eq:17} are scanning maps for manifolds $\bR$ and $N$;
\autoref{eq:28} sends an element $p \sm t$ for a configuration $p$ on $N$ with labels in $X$ and $t
\in S^1$ to the same configuration on $N$ with labels suspended all by $t$ in $\Sigma X$.
 All spaces presented have $A_{\infty}$-structures from the factor $\bR$ in $M$: for any space $Y$,
 both the labeled configuration space $F_{\bR}Y$ and the mapping space
 $\mathrm{Map}_{c}(\mathbb{R},Y) \simeq \Omega Y$ have obvious $A_{\infty}$-structures.
 
The map \autoref{eq:17} is a weak $G$-equivalence by applying part~\autoref{item:RS1} with $M$
replaced by $N$ and $X$ replaced by $\Sigma X$, which is $G$-connected.
It suffices to show the composite of
\autoref{eq:8} and \autoref{eq:28}, denoted as $j$, is a weak group completion.

\cite[Theorem 3]{RS00} constructed a homotopy equivalence
$$q: \mathrm{B}\big((F_MX)^G\big) \simeq \big(F_N(\Sigma X)\big)^G.$$
Moreover, in Page 548, they established a homotopy commutative diagram:
\begin{equation*}
    \begin{tikzcd}
      (F_MX)^G \ar[r,"j^G"] \ar[d] & \mathrm{Map}_{c}(\mathbb{R}, \big(F_N(\Sigma X))\big)^G \ar[d, equal] \\
      \mathrm{Map}_{c}(\mathbb{R}, \mathrm{B}\big((F_MX)^G\big)) \ar[r,"\Omega q"] &
      \mathrm{Map}_{c}(\mathbb{R}, \big(F_N(\Sigma X)\big)^G)
    \end{tikzcd}
\end{equation*}
The left column is the group completion map for the $A_{\infty}$-space $(F_MX)^G$. Since $q$ is a
homotopy equivalence, $j^G$ is a weak group completion.
This remains true for any subgroup $H \subgroup  G$ replacing $G$. Therefore, $j$ is a weak group
completion.
\end{proof}

\begin{rem}
 \cite{RS00} does not assume the manifold $M$ to be framed. Without
  the framing on $M$, \autoref{thm:RS} is true in the following form:

  The scanning map $\tilde{s}_{X}^{\mathrm{MD}}: F_MX \to \mathrm{Sect}_c(M, \mathrm{Sph}(\mathrm{T}M)
  \sm_M \tau_X)$ is
\begin{enumerate}
\item \label{item:RS1} a weak $G$-equivalence if  $X$ is $G$-connected,
\item \label{item:RS2} or a weak group completion if $M \cong N \times
  \mathbb{R}$.
\end{enumerate}
\end{rem}

\section{A comparison of $\theta$-framed morphisms}
\label{chap:homspace}
In \autoref{sec:tangential}, we defined the $\theta$-framed embedding space
of $\theta$-framed bundles using paths in the $\theta$-framing. 
In this appendix, we compare this approach to an alternative definition
following Ayala--Francis \cite[Definition 2.7]{AF15} in  \autoref{prop:framed-embedding2}.
With this alternative definition, we identify the automorphism
$G$-space $\mathrm{Emb}^{\theta}(V,V)$ of $V$ in $\mathrm{Mfld}^{\theta}_{G,n}$ in \autoref{thm:autoV};
the special case $\theta = \mathrm{fr}_V$ has been treated directly in \autoref{sec:embeddingspace}.

\subsection{The $\theta$-framed maps}
\label{sec:embedding-space}

The classification theorem says that isomorphism classes of vector bundles are in bijection to
homotopy classes of maps to a classifying space.
Passing to the classification maps seems to lose the information about morphisms between bundles,
but it turns out not to. We show that the space of morphisms between bundles is equivalent to the space
of homotopies between their classifying maps in \autoref{cor:HomVSMapoverB}.
To this end, we first define a suitable ``over category up to homotopy''.

Let $B$ be a $G$-space. A typical example is to take $B=B_GO(n)$.
Then we have a $\mathrm{Top}$-enriched over category $\Gtop/_B$: the objects are $G$-spaces over
$B$, and the morphisms are $G$-maps over $B$. Explicitly, for $G$-spaces over $B$ given by $G$-maps
$\phi_M: M \to B$ and $\phi_N: N \to B$, the space $\mathrm{Hom}_{\Gtop/_B}(M,N)$ is the pullback
displayed in the following diagram:  (note that we have $\mathrm{Hom}_{\Gtop} = \mathrm{Map}_G$)
\begin{equation}
  \label{eq:23}
  \begin{tikzcd}
    \mathrm{Hom}_{\Gtop/_{B}}(M,N) \ar[r] \ar[d] & \mathrm{Map}_G(M,N) \ar[d,"{\phi_N\circ -}"] \\
    * \ar[r,"\{\phi_M\}"'] & \mathrm{Map}_G(M,B)
  \end{tikzcd}
\end{equation}
Now we want to work with $G$-spaces over $B$ up to homotopy. We modify the
morphism space by taking the homotopy pullback in \autoref{eq:23}. Just like the difference between
$\Gtop$ and $\topG$, we have two versions: the $\mathrm{Top}$-enriched $\Gtop\overB$ and the
$\Gtop$-enriched $\topG\overB$. That is, we have homotopy pullback diagrams of spaces in
\autoref{eq:24} and of $G$-spaces in \autoref{eq:25}:
\begin{equation}
  \label{eq:24}
  \begin{tikzcd}
    \mathrm{Hom}_{\Gtop\overB}(M,N) \ar[r] \ar[d] & \mathrm{Map}_G(M,N) \ar[d,"{\phi_N\circ -}"] \\
    * \ar[r,"\{\phi_M\}"'] & \mathrm{Map}_G(M,B)
  \end{tikzcd}
\end{equation}
\begin{equation}
  \label{eq:25}
  \begin{tikzcd}
    \mathrm{Hom}_{\topG\overB}(M,N) \ar[r] \ar[d] & \mathrm{Map}(M,N) \ar[d,"{\phi_N\circ -}"] \\
    * \ar[r,"\{\phi_M\}"'] & \mathrm{Map}(M,B)
  \end{tikzcd}
\end{equation}
Using the Moore path space model for the homotopy fiber as given in the following definition, one
can define unital and associative compositions to make $\Gtop\overB$ and
$\topG\overB$ categories.
\begin{defn}
  \label{defn:mapOverB}
  For $\phi_M : M \to B$ and $\phi_N: N \to B$, the space $\mathrm{Hom}_{\Gtop\overB}(M,N)$ and the
  $G$-space $\mathrm{Hom}_{\topG\overB}(M,N)$ are given by:
  \begin{align*}
    \mathrm{Hom}_{\Gtop\overB}(M,N)
      =\{(f, \alpha, l)| & f \in \mathrm{Map}_G(M,N), \alpha \in \mathrm{Map}({\bR_{\geq 0}}, \mathrm{Map}_G(M,B)), \\
                         &l \in \mathrm{Map}({\mathrm{Map}_G(M,N)}, {\bR_{\geq0}}) \text{ such that }  \\
    & l \text{ is locally constant}, \\
                         & \alpha(0) = \phi_M, \alpha(t) = \phi_N \circ f \text{ for } t \geq l(f)\}. \\
   \mathrm{Hom}_{\topG\overB}(M,N)
      =\{(f, \alpha, l)| & f \in \mathrm{Map}(M,N), \alpha \in \mathrm{Map}({\bR_{\geq 0}}, \mathrm{Map}(M,B)), \\
                         &l \in \mathrm{Map}({\mathrm{Map}(M,N)}, {\bR_{\geq0}}) \text{ such that }  \\
    & l \text{ is locally constant}, \\
    & \alpha(0) = \phi_M, \alpha(t) = \phi_N \circ f \text{ for } t \geq l(f)\}. \\
\end{align*}
\end{defn}


\begin{rem}
  Roughly speaking, a point in the morphism space $\Gtop\overB$ is a $G$-map $f \in
  \mathrm{Map}_G(M,N)$ and a $G$-homotopy from $\phi_M$ to $\phi_N \circ f $ in the following diagram:
  \begin{equation*}
  \begin{tikzcd}
    & N \ar[d, "\phi_N"] \\
  M \ar[r, "\phi_M"'] \ar[ur, dotted, "f"] & B
\end{tikzcd}
\end{equation*}
  A point in the morphism space $\topG\overB$ is a map $f \in \mathrm{Map}(M,N)$ and a 
  homotopy from $\phi_M$ to $\phi_N \circ f $; the map $f$ is not necessarily a $G$-map, but we do
  require $\phi_M$ and $\phi_N$ to be $G$-maps. And we have
\begin{equation*}
 \mathrm{Hom}_{\Gtop\overB}(M,N) \cong ( \mathrm{Hom}_{\topG\overB}(M,N))^G.
\end{equation*}
\end{rem}

\medskip
The category $\topG\overB$ models $\theta$-framed bundles:
\begin{prop}
  \label{prop:mapbeta}
  For $i=1,2$, let $E_i \to B_i$ be $G$-$n$-vector bundles with $\theta$-framings ${\phi_i: E_i \to
  \theta^{*}\universal}$.
  We have the following equivalences of $G$-spaces
  that are natural with respect to the two variables as well as the tangential structure:
  \begin{equation*}
    \beta: \mathrm{Hom}^{\mathrm{\theta}}(E_1, E_2) \overset{\sim}{ \longrightarrow }
    \mathrm{Hom}_{\topG\overB}(B_1,B_{2}).
  \end{equation*}
\end{prop}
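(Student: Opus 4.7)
I would define the comparison map $\beta$ by projecting all bundle data down to base-space data: on a representative $(f,\alpha)$ with $f : E_1 \to E_2$ and $\alpha$ the framing-path from $\phi_1$ to $\phi_2\circ f$, set $\beta(f,\alpha) = (\bar{f},\bar{\alpha},1)$, where $\bar{f}: B_1 \to B_2$ is the underlying map of base spaces and $\bar{\alpha}: [0,1] \to \mathrm{Map}(B_1,B)$ is the induced path from $\phi_{B_1}$ to $\phi_{B_2}\circ\bar{f}$, with Moore length the constant function $1$. This assignment is continuous, $G$-equivariant (both sides carry conjugation actions), and manifestly natural in $E_1$, $E_2$, and the tangential structure $\theta$.

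The central observation is that, after the Moore-path strictification of \autoref{defn:mapOverB} (which does not alter the weak homotopy type), $\beta$ coincides with the natural map on horizontal homotopy fibers of the $G$-equivariant commutative square
\begin{equation*}
\begin{tikzcd}
\mathrm{Hom}(E_1,E_2) \ar[r, "\phi_2 \circ -"] \ar[d, "\pi_1"'] & \mathrm{Hom}(E_1,\theta^{*}\universal) \ar[d, "\pi_2"] \\
\mathrm{Map}(B_1,B_2) \ar[r, "\phi_{B_2} \circ -"'] & \mathrm{Map}(B_1,B)
\end{tikzcd}
\end{equation*}
where $\pi_i$ extracts the base map of a bundle map, and the basepoints on the right are $\phi_1$ and $\phi_{B_1}$ respectively. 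Hence to prove $\beta$ is a weak $G$-equivalence it suffices to show this square is a $G$-equivariant homotopy pullback, and since $G$ is finite this reduces, for each subgroup $H \subgroup G$, to checking that the $H$-fixed-point square is a homotopy pullback of ordinary spaces.

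For each such $H$, I would identify the induced map on vertical fibers. Over an $H$-equivariant $g: B_1 \to B_2$, the fiber of $\pi_1^H$ is $\mathrm{Iso}_H(E_1, g^{*}E_2)$, while the fiber of $\pi_2^H$ over $\phi_{B_2}\circ g$ is $\mathrm{Iso}_H(E_1, g^{*}\phi_{B_2}^{*}\theta^{*}\universal)$. Because $\phi_2$ is a $G$-equivariant fiberwise-linear isomorphism covering $\phi_{B_2}$, it determines a $G$-bundle isomorphism $E_2 \cong \phi_{B_2}^{*}\theta^{*}\universal$ over $B_2$; pulling back along $g$ gives an $H$-equivariant isomorphism $g^{*}E_2 \cong g^{*}\phi_{B_2}^{*}\theta^{*}\universal$, and the induced horizontal map on fibers is precisely post-composition with this iso, hence a homeomorphism.

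The main obstacle will be to verify that $\pi_1^H$ and $\pi_2^H$ are actually Hurewicz fibrations, so that these ordinary fibers compute homotopy fibers. For $\pi_2$ this is essentially the content of \autoref{thm:MapToUniversal} applied to $P := \mathrm{Fr}(E_1)$, which recognizes the analogous projection as a universal principal $(\mathrm{Aut}_{B_1}P;\mathrm{Aut}_{B_1}P \rtimes G)$-bundle, pulled back along $\mathrm{Map}(B_1,B) \to \mathrm{Map}(B_1,B_GO(n))$; $\pi_1$ is handled by viewing it as an associated fiber bundle of $\mathrm{Fr}(E_2)$ via the equivariant bundle framework of \autoref{sec:equiBundle}, and fixed-point restrictions of $G$-equivariant Hurewicz fibrations (for finite $G$) remain Hurewicz fibrations by standard equivariant homotopy theory. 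Combining these inputs, $\beta^H$ is a weak equivalence for every $H$, giving the desired weak $G$-equivalence; naturality in both variables and in $\theta$ is automatic from the construction.
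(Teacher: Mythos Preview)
Your approach is correct and rests on the same key observation as the paper: the $\theta$-framing $\phi_2$ exhibits $E_2$ as the pullback $\phi_{B_2}^{*}\theta^{*}\universal$. The paper, however, packages this more directly. Rather than passing to $H$-fixed points, verifying that both $\pi_1$ and $\pi_2$ are Hurewicz fibrations, and then comparing their point-set fibers, it observes in one line that your square is a \emph{strict} pullback of $G$-spaces---this is exactly the universal property of the pullback bundle $E_2 \cong \phi_{B_2}^{*}\theta^{*}\universal$---and concludes that $\beta$, the induced map on homotopy fibers of the horizontal maps, is an equivalence. (The paper leaves implicit that the bottom restriction map $\pi$ is a fibration, which is what promotes the strict pullback to a homotopy pullback; this is where your appeal to \autoref{thm:MapToUniversal} enters.) Your fiberwise-homeomorphism computation is the same content read off over each base point, and the separate verification that $\pi_1$ is a fibration is redundant: once the square is a strict pullback, $\pi_1$ is the base-change of $\pi_2$ along $\phi_{B_2}\circ -$ and hence inherits the fibration property automatically. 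The detour through $H$-fixed points is likewise unnecessary, since the pullback identification is already $G$-equivariant on the nose.
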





\begin{proof}

One can restrict bundle maps to get maps on the base spaces. We denote this restriction map
by $\pi$. From our definition of
$\mathrm{Hom}^{\theta}$ in \autoref{defn:theta-bundle-map} and $\mathrm{Hom}_{\topG\overB}$ in
\autoref{defn:mapOverB}, $\pi$ induces the map $\beta$ and they fit in the following commutative
 diagram of $G$-spaces:
 \begin{equation}
   \label{eq:bundlecompare}
  \begin{tikzcd}
  \mathrm{Hom}^{\theta}(E_1,E_2) \ar[r, "\beta", "\sim"'] \ar[d, two heads]
  & \mathrm{Hom}_{\topG\overB}(B_1, B_2)  \ar[d,
    two heads] \\
    \mathrm{Hom}(E_1, E_2) \ar[r,"\pi"]  \ar[d, "\phi_2 \circ-"'] \ar[rd, phantom, "\lrcorner", very near
    start ]& \mathrm{Map}(B_1, B_2) \ar[d, "\phi_2 \circ-"] \\
    \mathrm{Hom}(E_1,\theta^{*}\universal) \ar[r,"\pi"] & \mathrm{Map}(B_{1},B)
  \end{tikzcd}
\end{equation}
We claim that the bottom square is a pullback. Since each column is a homotopy fiber sequence,
this implies immediately that $\beta$ is a $G$-equivalence.

To show the claim, first we note that the isomorphism $\phi_2: E_2 \cong \phi_2^{*}\theta^{*}\universal$
establishes $E_2$ as a pullback of
$\theta^{*}\universal$ over $\phi_{2}$. So a bundle map $E_1 \to E_2$ is determined by a map on the base
$f: B_1 \to B_2$ and a bundle map $(\bar{\varphi},\varphi): (E_1,B_1) \to (\universal,B)$
satisfying $\varphi = \phi_2 f $.
\begin{equation*}
  \begin{tikzcd}
    E_1 \ar[r,dotted] \ar[rr, bend left, dotted, "\bar{\varphi}"] \ar[d] & E_2 \ar[r] \ar[d]
    \ar[rd, phantom, "\lrcorner", very near start ]
    & \theta^{*}\universal \ar[d] \\
    B_1 \ar[r, "f"'] & B_2 \ar[r, "\phi_2"'] & B
  \end{tikzcd} \qedhere
\end{equation*}
\end{proof}

We remark that in \autoref{prop:mapbeta},
$\pi$ is not a homotopy equivalence to its image.
In other words, a vector bundle map is not just a map on the bases.
In contrast, a $\theta$-framed vector bundle map can be seen as a
map on the bases as $\beta$ is an equivalence. 

\begin{lem}(\cite[Lemma 3.18]{ZouBundle})
  \label{lem:contractible} Let  $p: P \to B$ be any principal $G$-$\Pi$-bundle and
  $\mathrm{Hom}(P,E_G\Pi)$ be  the space of (non-equivariant) principal  $\Pi$-bundle
  morphisms with $G$ acting by conjugation. $\mathrm{Hom}(P,E_G\Pi)$ is $G$-contractible.
\end{lem}

The ``classical'' bundle maps are the $\theta$-framed bundle maps for the
tangential structure $\theta = \mathrm{id}: B_GO(n) \to B_GO(n)$:
\begin{lem}
  \label{prop:mapalpha}
  For $G$-vector bundles $E_i \to B_i,\ i=1,2$, we have an equivalence of $G$-spaces:
  \begin{equation*}
    \alpha: \mathrm{Hom}^{\mathrm{id}}(E_1, E_2)  \overset{\sim}{ \longrightarrow } \mathrm{Hom}(E_1, E_2).
  \end{equation*}
\end{lem}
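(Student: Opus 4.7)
The plan is to recognize $\alpha$ as the projection of a Moore homotopy fiber onto its total space, and to observe that the base of this fibration is $G$-contractible by the universal property of $\universal$, so the projection must be a $G$-equivalence. Specializing \autoref{defn:theta-bundle-map} to $\theta=\mathrm{id}\colon B_GO(n)\to B_GO(n)$, we have $\theta^{*}\universal=\universal$, and any $G$-$n$-vector bundle $E\to B$ has a canonical $\mathrm{id}$-framing, namely its classifying bundle map $\phi_E\colon E\to\universal$. Thus
\[
\mathrm{Hom}^{\mathrm{id}}(E_1,E_2)=\mathrm{hofib}\bigl(\mathrm{Hom}(E_1,E_2)\xrightarrow{\phi_2\circ -}\mathrm{Hom}(E_1,\universal)\bigr),
\]
with the homotopy fiber taken at the basepoint $\phi_1$, and under this identification $\alpha$ is the canonical projection $(f,\lambda,l)\mapsto f$ that forgets the Moore-path datum.

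All spaces and maps in sight carry the conjugation $G$-action, and $\phi_1$ is a $G$-fixed basepoint, so for each subgroup $H\subset G$ the $H$-fixed points of the Moore homotopy fiber equal the Moore homotopy fiber of the $H$-fixed point map. Consequently it suffices to show that $\mathrm{Hom}(E_1,\universal)$ is $G$-contractible: then each $\mathrm{Hom}(E_1,\universal)^{H}$ is contractible, the path space based at $\phi_1^{H}$ is contractible, and the projection $\alpha^{H}$ is a weak equivalence for every $H$.

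For the $G$-contractibility, I would use the equivalence of categories \autoref{thm:G-structure-1} between $G$-$n$-vector bundles and principal $G$-$O(n)$-bundles to identify
\[
\mathrm{Hom}(E_1,\universal)\cong\mathrm{Hom}(\mathrm{Fr}(E_1),E_GO(n))
\]
as $G$-spaces with the conjugation action. Since $E_GO(n)$ is the universal principal $G$-$O(n)$-bundle, \autoref{lem:contractible} immediately delivers the $G$-contractibility of the right-hand side. The only mild obstacle is the bookkeeping: confirming that the bundle-to-frame-bundle correspondence is $G$-equivariant on mapping spaces (not just on isomorphism classes), and that the Moore homotopy fiber genuinely commutes with $H$-fixed points; both are routine from the definitions.
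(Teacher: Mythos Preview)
Your proposal is correct and follows essentially the same approach as the paper: recognize $\alpha$ as the projection from the homotopy fiber of $\phi_2\circ -$, and then invoke \autoref{lem:contractible} to see that $\mathrm{Hom}(E_1,\universal)$ is $G$-contractible. The paper's proof is just a terser version of yours; it cites \autoref{lem:contractible} directly for $\mathrm{Hom}(E_1,\universal)$ without spelling out the passage through frame bundles or the compatibility of homotopy fibers with fixed points, both of which you make explicit.
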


\begin{proof}
  By definition, $\mathrm{Hom}^{\mathrm{id}}(E_1,E_2)$ is the homotopy fiber of $\phi_2 \circ -$, so
  we have a homotopy fiber sequence of $G$-spaces:
\begin{equation*}
  \begin{tikzcd}
  \mathrm{Hom}^{\mathrm{id}}(E_1,E_2) \ar[r, two heads, "\alpha"] &
    \mathrm{Hom}(E_1, E_2) \ar[r, "\phi_2 \circ-"] &
    \mathrm{Hom}(E_1,\universal)
  \end{tikzcd}.
\end{equation*}
By \autoref{lem:contractible}, we know $\mathrm{Hom}(E_1,\universal)$ is $G$-contractible.
So $\alpha$ is a $G$-equivalence.
\end{proof}

\begin{cor}
  \label{cor:HomVSMapoverB}
  For $G$-vector bundles $E_i \to B_i,\ i=1,2$, we have an equivalence of $G$-spaces:
  \begin{equation*}
    \mathrm{Hom}(E_1, E_2) \simeq \mathrm{Hom}_{\topG\overB[B_GO(n)]}(B_1, B_2).
  \end{equation*}
\end{cor}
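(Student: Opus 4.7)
The plan is to deduce this corollary by combining the two equivalences established just above it. Specialize the tangential structure to $\theta = \mathrm{id}: B_GO(n) \to B_GO(n)$. In this case, every $G$-vector bundle $E_i \to B_i$ has a canonical $\theta$-framing: the classifying map $\phi_{B_i}: B_i \to B_GO(n)$ of $E_i$ together with the tautological bundle map $E_i \to \phi_{B_i}^{*}\universal = \phi_{B_i}^{*}\mathrm{id}^{*}\universal$ coming from the definition of the classifying map. So the categories $\mathrm{Vec}^{\mathrm{id}}$ and $\mathrm{Vec}$ have the same objects.

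First I would invoke \autoref{prop:mapbeta} with $\theta = \mathrm{id}$ to get a natural $G$-equivalence
\begin{equation*}
\beta: \mathrm{Hom}^{\mathrm{id}}(E_1, E_2) \overset{\sim}{\longrightarrow} \mathrm{Hom}_{\topG\overB[B_GO(n)]}(B_1, B_2).
\end{equation*}
Then I would invoke \autoref{prop:mapalpha} to get a natural $G$-equivalence
\begin{equation*}
\alpha: \mathrm{Hom}^{\mathrm{id}}(E_1, E_2) \overset{\sim}{\longrightarrow} \mathrm{Hom}(E_1, E_2).
\end{equation*}
Composing $\beta$ with a $G$-homotopy inverse of $\alpha$ yields the claimed $G$-equivalence
\begin{equation*}
\mathrm{Hom}(E_1, E_2) \simeq \mathrm{Hom}_{\topG\overB[B_GO(n)]}(B_1, B_2).
\end{equation*}

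There is essentially no hard step: the whole content has been packaged into the two preceding results. The only thing to double-check is that the identifications are natural enough in the two bundles to assemble coherently, which follows from the fact that both $\beta$ and $\alpha$ are defined by simply restricting a bundle map or forgetting path data, operations that are manifestly functorial in $(E_1, E_2)$. The conceptual takeaway worth emphasizing is that although a (non-framed) bundle map $E_1 \to E_2$ contains strictly more data than a map of base spaces $B_1 \to B_2$, the difference is exactly absorbed by the homotopy of classifying maps tracked by $\mathrm{Hom}_{\topG\overB[B_GO(n)]}$, because the space of admissible bundle maps into the universal bundle is $G$-contractible by \autoref{lem:contractible}.
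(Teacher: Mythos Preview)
Your proposal is correct and matches the paper's proof exactly: the paper simply says the corollary follows from \autoref{prop:mapbeta} and \autoref{prop:mapalpha}, which is precisely the zigzag through $\mathrm{Hom}^{\mathrm{id}}(E_1,E_2)$ via $\beta$ and $\alpha$ that you wrote out.
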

\begin{proof}
  This follows from \autoref{prop:mapbeta} and \autoref{prop:mapalpha}.
\end{proof}

\medskip
\begin{prop} \label{prop:framed-embedding2}
  The $G$-space $\mathrm{Emb}^{\theta}(M,N)$ as defined in \autoref{def:embedding} is the homotopy 
  pullback displayed in the following diagram of $G$-spaces:
\begin{equation}
    \label{eq:emb-space2}
  \begin{tikzcd}
    \mathrm{Emb}^{\theta}(M,N) \ar[r] \ar[d] & \mathrm{Hom}_{\topG\overB}(M,N) \ar[d] \\
    \mathrm{Emb}(M,N) \ar[r] & \mathrm{Hom}_{\topG\overB[B_{G}O(n)]}(M,N)
  \end{tikzcd}
\end{equation}
\end{prop}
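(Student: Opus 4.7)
The plan is to deduce this proposition directly from the two comparison results already established for morphism spaces: \autoref{prop:mapbeta} gives a natural $G$-equivalence
\begin{equation*}
\beta: \mathrm{Hom}^{\theta}(\mathrm{T}M,\mathrm{T}N) \overset{\sim}{\to} \mathrm{Hom}_{\topG\overB}(M,N),
\end{equation*}
and \autoref{cor:HomVSMapoverB} gives a natural $G$-equivalence
\begin{equation*}
\mathrm{Hom}(\mathrm{T}M,\mathrm{T}N) \overset{\sim}{\to} \mathrm{Hom}_{\topG\overB[B_GO(n)]}(M,N).
\end{equation*}
These are natural in $M$ and $N$, so together with the obvious forgetful map $\mathrm{Hom}^{\theta} \to \mathrm{Hom}$ of $\theta$-framed to unframed bundle maps and the forgetful map $\mathrm{Hom}_{\topG\overB} \to \mathrm{Hom}_{\topG\overB[B_GO(n)]}$ (induced by composing with $\theta: B \to B_GO(n)$), they fit into a commutative ladder of $G$-spaces relating the right-hand cospan of \autoref{eq:emb-space1} to the right-hand cospan of \autoref{eq:emb-space2}.

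First I would assemble this ladder explicitly and verify its commutativity by inspection on points, unraveling definitions: a $\theta$-framed bundle map $(f,\alpha)$ of tangent bundles maps on one side to its base map $f$ together with the induced path of classifying maps in $\mathrm{Map}(M,B)$, and on the other side to its underlying bundle map, whose base map and classifying-map homotopy (using the contractibility argument of \autoref{prop:mapalpha}) coincide with the image of $f$ under $\theta\circ-$. The horizontal map of pullback cones on the bottom is $\mathrm{Emb}(M,N)\to\mathrm{Emb}(M,N)$, the identity: both diagrams agree on the embedding factor.

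Next, I would observe that the defining pullback \autoref{eq:emb-space1} is in fact a homotopy pullback of $G$-spaces. Indeed, by construction $\mathrm{Hom}^{\theta}(\mathrm{T}M,\mathrm{T}N)$ is the (Moore-path) homotopy fiber of $\phi_2\circ-$, so the right vertical map $\mathrm{Hom}^{\theta}\to\mathrm{Hom}$ is a $G$-Hurewicz fibration; hence the strict pullback defining $\mathrm{Emb}^{\theta}(M,N)$ agrees with the homotopy pullback. Since homotopy pullbacks are invariant under pointwise $G$-equivalences of cospans, and all three comparison maps in the ladder (the two on the right, and the identity on the left) are $G$-equivalences, the homotopy pullback of \autoref{eq:emb-space1} is naturally $G$-equivalent to the homotopy pullback of \autoref{eq:emb-space2}.

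The only real content beyond bookkeeping is the commutativity of the ladder, which amounts to tracking how the two constructions of the map ``bundle map $\leadsto$ map of base spaces together with homotopy of classifying maps'' interact with forgetting the $\theta$-structure; this was already essentially done inside the proof of \autoref{prop:mapbeta}, where the bottom square of \autoref{eq:bundlecompare} was identified as a pullback. I expect this naturality check to be the main (though routine) obstacle: one must be careful with the Moore-path length data and with the fact that $\beta$ uses the $\theta$-framings $\phi_i$ while $\alpha$ uses the classifying maps $\phi_i$ composed with $\theta$, so the forgetful square commutes strictly rather than only up to homotopy.
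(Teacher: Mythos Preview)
Your proposal is correct and follows essentially the same route as the paper: replace the right-hand cospan of \autoref{eq:emb-space1} by the equivalent cospan of \autoref{eq:emb-space2} via the maps $\alpha$ and $\beta$, observe that the original square is already a homotopy pullback because $\mathrm{Hom}^{\theta}\to\mathrm{Hom}$ is a fibration, and conclude by invariance of homotopy pullbacks. The one point the paper makes more explicit is that the bottom horizontal map $\mathrm{Emb}(M,N)\to\mathrm{Hom}_{\topG\overB[B_GO(n)]}(M,N)$ is \emph{not} canonical: it is defined as the composite $\beta\circ\alpha^{-1}\circ d$ for a \emph{chosen} $G$-homotopy inverse $\alpha^{-1}$, and the paper assembles the whole comparison into a single commutative diagram \eqref{eq:comparison} so that the commutativity of your ladder is visible by construction rather than needing a separate naturality check.
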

\begin{proof}
  The lower horizontal map in \autoref{eq:emb-space2} is neither obvious nor canonical.
  We take it as the composite in the following commutative diagram with a chosen $G$-homotopy
  inverse to $\alpha$. The maps $\alpha$ and $\beta$ are $G$-equivalences by \autoref{prop:mapbeta} and \autoref{prop:mapalpha}.
 \begin{equation}
   \label{eq:comparison}
  \begin{tikzcd}
    \mathrm{Emb}^{\theta}(M,N) \ar[r] \ar[dd] & \mathrm{Hom}^{\mathrm{\theta}}(\mathrm{T}M,
    \mathrm{T}N) \ar[r, "\sim"', "\beta" ] \ar[d] & \mathrm{Hom}_{\topG\overB}(M,N) \ar[d] \\
    & \mathrm{Hom}^{\mathrm{id}}(\mathrm{T}M, \mathrm{T}N)\ar[r, "\sim"', "\beta"] \ar[d, "\sim", "\alpha"'] &
    \mathrm{Hom}_{\topG\overB[B_{G}O(n)]}(M,N) \\
    \mathrm{Emb}(M,N) \ar[r, "d"] & \mathrm{Hom}(\mathrm{T}M, \mathrm{T}N) &
  \end{tikzcd}
\end{equation}
As defined in \autoref{def:embedding}, $\mathrm{Emb}^{\theta}(M,N)$ is the pullback in the left square.
It is clear that it is also equivalent to the homotopy pullback of the whole square.
\end{proof}

We can take \autoref{eq:emb-space2} as an alternative definition to \autoref{eq:emb-space1}.
In practice, \autoref{eq:emb-space1} is easier to deal with.
First, the right vertical map in the square is a fibration so the diagram is an
actual pullback. Second, the map $d$ is easy to describe. On the other hand, \autoref{eq:emb-space2}
has a conceptual advantage.
It can be viewed as a comparison of the $\theta$-framing to the trivial framing $\mathrm{id}:
{B_{G}O(n)} \to B_GO(n)$.

\subsection{Automorphism space of $(V,\phi)$}
\label{sec:auto}
With this alternative description of $\theta$-framed mapping spaces in
\autoref{sec:embedding-space}, we can identify the automorphism
$G$-space $\mathrm{Emb}^{\theta}(V,V)$ of $V$ in $\mathrm{Mfld}^{\theta}_{G,n}$ by
first identifying of the automorphism $G$-space $\mathrm{Hom}^{\theta}(\mathrm{T}V,
\mathrm{T}V)$ of $\mathrm{T}V$ in $\mathrm{Vec}^{\theta}_{G,n}$.

\begin{notn}
As $\phi$ is an equivariant map, $\phi(0)$ for the origin $0 \in V$ is a $G$-fixed point in $B$.
We denote by $\moore_{\phi}B$ the Moore loop space of $B$ at the base point $\phi(0)$.
\end{notn}

\begin{thm}
  \label{thm:autoV} We have the following: 
\begin{enumerate}
\item  \label{lem:LoopB}
There is an equivalence of monoids in $G$-spaces
\begin{equation*} 
\mathrm{Hom}^{\theta}(\mathrm{T}V, \mathrm{T}V) \overset{\sim}{ \to } \moore_{\phi} B,
\end{equation*}
which is natural with respect to tangential structures $\theta: B \to B_GO(n)$.
Here, the group $G$ acts on both sides by conjugation.

\item \label{item:autoV}
  The automorphism $G$-space $\mathrm{Emb}^{\theta}(V,V)$ of $(V,\phi)$ in $\mathrm{Mfld}^{\theta}_{G,n}$ fits in the following homotopy
pullback diagram of $G$-spaces:
\begin{equation*}
  \begin{tikzcd}
    \mathrm{Emb}^{\theta}(V,V) \ar[d] \ar[r] & \moore_{\phi} B \ar[d]\\
    \mathrm{Emb}(V,V) \ar[r,"d_0"'] & O(V)
  \end{tikzcd}
\end{equation*}
Consequently, $\mathrm{Emb}^{\theta}(V,V)\simeq \moore_{\phi} B$.
\end{enumerate}

\end{thm}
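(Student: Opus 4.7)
The plan is to deduce part (1) from Proposition \ref{prop:mapbeta} combined with the contractibility of $V$, and then derive part (2) from part (1) using Proposition \ref{prop:framed-embedding2}.

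For part (1), I first apply Proposition \ref{prop:mapbeta} with $E_1 = E_2 = \mathrm{T}V$ (over the base $V$) to obtain a natural $G$-equivalence $\mathrm{Hom}^\theta(\mathrm{T}V, \mathrm{T}V) \simeq \mathrm{Hom}_{\topG\overB}(V, V)$. Since $V$ is $G$-contractible via the $G$-fixed origin, evaluation at $0$ gives a commutative square of $G$-spaces
\begin{equation*}
\begin{tikzcd}
\mathrm{Map}(V,V) \ar[r,"\phi \circ -"] \ar[d,"{ev_0}"',"\simeq"] & \mathrm{Map}(V,B) \ar[d,"{ev_0}","\simeq"'] \\
V \ar[r, "\phi"'] & B
\end{tikzcd}
\end{equation*}
in which both vertical maps are $G$-equivalences. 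Hence the homotopy fiber of the top map over $\phi$, which by definition is $\mathrm{Hom}_{\topG\overB}(V,V)$, is $G$-equivalent to the Moore homotopy fiber of $\phi: V \to B$ over $\phi(0)$, and this in turn retracts onto $\moore_\phi B$. The composition on $\mathrm{Hom}^\theta$ is given on the nose by concatenation of Moore paths (Definition \ref{defn:theta-bundle-map}), which matches Moore-loop concatenation on $\moore_\phi B$. To upgrade the $G$-equivalence to an equivalence of $G$-monoids I plan to factor through a strict-monoid intermediate built from Moore paths in $\mathrm{Map}(V,B)$ based at $\phi$, paralleling the zigzag through $(\widetilde{\moore}_b E_GO(n))/O(n)$ in Remark \ref{rem:OV}; naturality in $\theta$ is then immediate from the construction.

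For part (2), Proposition \ref{prop:framed-embedding2} presents $\mathrm{Emb}^\theta(V,V)$ as the homotopy pullback of $\mathrm{Emb}(V,V) \to \mathrm{Hom}_{\topG\overB[B_{G}O(n)]}(V,V) \leftarrow \mathrm{Hom}_{\topG\overB}(V,V)$. Part (1) applied to $\theta$ identifies the right term with $\moore_\phi B$, while part (1) applied to the trivial tangential structure $\theta = \mathrm{id}$ identifies the middle term with $\moore_b B_GO(n)$, where $b$ is the $G$-fixed classifying point of $V$; Theorem \ref{cor:monoidmap} further identifies this $G$-monoid with $O(V)$. Chasing the derivative map $d: \mathrm{Emb}(V,V) \to \mathrm{Hom}(\mathrm{T}V, \mathrm{T}V)$ through this chain of equivalences shows that the induced bottom map sends an embedding $f$ to its derivative $df_0$ at the origin, which is precisely $d_0$, yielding the claimed homotopy pullback square.

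The main technical obstacle is upgrading the $G$-equivalence in part (1) to an equivalence of $G$-monoids, rather than merely of underlying $G$-homotopy types. Naively composing the homotopy equivalences only produces an $A_\infty$-map; obtaining strict monoid-compatibility requires selecting consistent Moore-path-length data throughout and passing through a strict-monoid intermediate model, mirroring the careful strictification already carried out in Remark \ref{rem:OV} for $B_GO(n)$. Once this model is in place, the compatibility of composition with concatenation is essentially formal and naturality in $\theta$ follows directly.
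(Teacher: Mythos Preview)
Your proposal is correct and follows essentially the same route as the paper: both parts are deduced from Proposition~\ref{prop:mapbeta}, the contractibility of $V$, Proposition~\ref{prop:framed-embedding2}, and Theorem~\ref{cor:monoidmap}. The only cosmetic difference is that for part~(1) the paper works internally to the category $\topG\overB$, constructing explicit mutually inverse morphisms $I$ and $P$ between $V$ and $\mathrm{pt}$ and then pre/post-composing, whereas you argue directly with evaluation at $0$ on mapping spaces; these are the same contractibility argument phrased two ways. Your candor about the monoid-strictification issue is well placed: the paper's map $f \mapsto P \circ f \circ I$ is likewise not strictly multiplicative (since $I \circ P$ is only homotopic to the identity), so neither argument resolves this on the nose, and indeed in part~(2) the paper explicitly concedes that the right vertical map $\moore_{\phi}B \to O(V)$ is only a $G$-equivalence, not a map of $G$-monoids.
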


\begin{proof}
  \autoref{lem:LoopB}
  We have $ \mathrm{Hom}_{\topG\overB}(V,V)$ from \autoref{defn:mapOverB} and showed in
  \autoref{prop:mapbeta} that restriction-to-the-base gives a natural $G$-equivalence:
\begin{equation*} 
\beta: \mathrm{Hom}^{\theta}(\mathrm{T}V, \mathrm{T}V) \overset{\sim}{\to} \mathrm{Hom}_{\topG\overB}(V,V).
\end{equation*}
  Let $*$ be the $G$-space over $B$ given by $\phi(0): * \to B$.
  We claim that the two maps $\mathrm{inc}: 0 \to V$ and $\mathrm{proj}: V \to
  *$ can be lifted to give equivalences of $V \simeq *$ in $\topG\overB$.
  If so, pre-composing with $\mathrm{inc}$ and post-composing with $\mathrm{proj}$ give
\begin{equation*}
\mathrm{Hom}_{\topG\overB}(V,V) \overset{\sim}{ \to } \mathrm{Hom}_{\topG\overB}(*, *) \cong \moore_{\phi} B.
\end{equation*}

It remains to verify the claim, which is a routine job.
We choose the lifts of $\mathrm{inc}$ and $\mathrm{proj}$ given by 
$$I=(\mathrm{inc}, \alpha_1, 0) \in \mathrm{Hom}_{\topG\overB}(*,V),
\text{ where }\alpha_1(t) = \phi(0) \text{ for all } t \geq 0.$$
$$P=(\mathrm{proj}, \alpha_2, 1) \in \mathrm{Hom}_{\topG\overB}(V,*),
\text{ where } \alpha_2(t) =
\begin{cases}
  \phi \circ h_t, & 0 \leq t < 1; \\
  \phi(0), & t \geq 1;
\end{cases}$$ where $h_t:V \to V$ is any chosen homotopy from $h_0 = \mathrm{id}$ to $h_1 = \mathrm{proj}$. 
Then we have an obvious homotopy:
\begin{equation*}
P \circ I = (\mathrm{id}, \mathrm{const}_{\phi(0)}, 1) \simeq (\mathrm{id}, \mathrm{const}_{\phi(0)}, 0) =
\mathrm{id}_{*}
\end{equation*}
and using the contraction $h_t$, we can also construct a homotopy:
\begin{equation*}
I \circ P = (\mathrm{proj}, \alpha_2, 1) \simeq (\mathrm{id}, \mathrm{const}_{\phi}, 0) =
\mathrm{id}_{V}. \qedhere
\end{equation*}

\autoref{item:autoV}
  This is an assembly of part~\autoref{lem:LoopB}, \autoref{prop:framed-embedding2} and
  \autoref{cor:monoidmap}. However, we note that the map $\moore_{\phi}B \to O(V)$
  is only a non-canonical $G$-equivalence. The author does not know how to upgrade it to a map of
  $G$-monoids.
  So although all spaces displayed in the pullback diagram are $G$-monoids, it is not obvious
  whether one can write $\mathrm{Emb}^{\theta}(V,V)$ as a pullback of $G$-monoids.

  To be more precise, we show how the quoted results assemble. We have the following large
  commutative diagram \autoref{eq:large} expanding \autoref{eq:comparison}.
  Note that this is a commutative diagram of $G$-monoids.
\begin{equation}\small
  \label{eq:large}
  \begin{tikzcd}
    \mathrm{Emb}^{\theta}(V,V) \ar[r] \ar[dd] &
    \mathrm{Hom}^{\theta}(\mathrm{T}V, \mathrm{T}V)\ar[r, "\sim"', "\beta"] \ar[d]
    \ar[rd,phantom,"\textcircled{1}"] & \mathrm{Hom}_{\topG\overB}(V,V) \ar[rd,"\sim"] \ar[d] & \\
    &
    \mathrm{Hom}^{\mathrm{id}}(\mathrm{T}V, \mathrm{T}V)\ar[r, "\sim"', "\beta"] \ar[d, "\sim",
    "\alpha"'] \ar[rd,"\sim"] &
    \mathrm{Hom}_{\topG\overB[B_{G}O(n)]}(V,V) \ar[rd,"\sim"]
    \ar[r,phantom, "\textcircled{2}"] \ar[d,phantom,"\textcircled{3}"]
    & \moore_{\phi}B \ar[d]\\
    \mathrm{Emb}(V,V) \ar[r] &
    \mathrm{Hom}(\mathrm{T}V, \mathrm{T}V) \ar[rd, "\sim"] \ar[r, phantom, "\textcircled{4}"]
    & \mathrm{Hom}^{\mathrm{id}}(V, V)\ar[r, "\sim"', "\beta"] \ar[d, "\sim", "\alpha"']
    & \moore_{\phi}B_{G}O(n) \\
    & & \mathrm{Hom}(V,V) = O(V) & \\
  \end{tikzcd}
\end{equation}
The map $\alpha$ is studied in \autoref{prop:mapalpha}. The map $\beta$ and the square \textcircled{1} are
in \autoref{prop:mapbeta}. The diagonal unlabeled maps are all induced by the inclusion
$V \to \mathrm{T}V$ and the projection $\mathrm{T}V \to V$. In particularly, the parallelogram
\textcircled{2} is in part~\autoref{lem:LoopB}. Naturality of $\alpha$ and $\beta$ gives the commutativity of
\textcircled{3} and \textcircled{4}.
Now, $d_0$ in the theorem is the composite
\begin{equation*}
  \begin{tikzcd}
    \mathrm{Emb}(V,V) \ar[r, "d"] & \mathrm{Hom}(\mathrm{T}V, \mathrm{T}V) \ar[r,"\sim"] & \mathrm{Hom}(V,V).
  \end{tikzcd}
\end{equation*}
It can be seen that the vertical map in the theorem involves choosing an inverse of the $\beta$
displayed in the third line.
\end{proof}

\begin{rem}
  \label{rem:zigzag}
The equivalence \autoref{cor:monoidmap} is  hidden in the following part of \autoref{eq:large}:
\begin{equation*}
  \begin{tikzcd}
     \moore_{\phi}B_{G}O(n) & \mathrm{Hom}^{\mathrm{id}}(V, V)\ar[l, "\sim", "\beta"'] \ar[r, "\sim"', "\alpha"]
     & \mathrm{Hom}(V,V) = O(V).
  \end{tikzcd}
\end{equation*}
(See also \cite[4.4.12, 5.3.4]{mythesis}.)
\end{rem}
\bibliographystyle{alpha}
\bibliography{../project/factorization}

\end{document}
